\documentclass[11pt,reqno]{amsart}

\usepackage{amsmath,amssymb,mathtools}
\usepackage[margin=1.05in]{geometry}
\usepackage[hidelinks]{hyperref}

\newtheorem{theorem}{Theorem}[section]
\newtheorem{proposition}[theorem]{Proposition}
\newtheorem{lemma}[theorem]{Lemma}
\newtheorem{corollary}[theorem]{Corollary}

\newtheorem{conj}[theorem]{Conjecture}

\theoremstyle{definition}
\newtheorem{definition}[theorem]{Definition}
\newtheorem{remark}[theorem]{Remark}

\newcommand{\F}{\mathbb F}

\newcommand{\rank}{\operatorname{rank}}

\newcommand{\BR}{\operatorname{BR}}

\title{Linear matroid products: a synthetic approach}
\author{Mykhaylo Tyomkyn}
\address{Department of Applied Mathematics, Faculty of Mathematics and Physics, Charles University}
\curraddr{}
\email{tyomkyn@kam.mff.cuni.cz}
\thanks{Supported by GA\v{C}R grant 25-17377S}

\begin{document}

\begin{abstract}
The recently established duality between tensor, symmetric and exterior products of uniform matroids on the one hand, and abstract (bi-)rigidity matroids on the other hand, connects theorems and open questions from both areas, previously thought unrelated.  
In particular, a 1981 question of Mason asks if any two uniform matroids admit a freest product. Via duality, its reformulation due to Cruickshank, Jackson, Jord\'an and Tanigawa asks if the generic birigidity matroid is the freest abstract birigidity matroid for its parameters. A related conjecture of Jackson and Tanigawa asks if the generic $2$-hyperconnectivity matroid is the freest $\{K_4,K_{3,3}\}$-matroid. We show that these problems can be addressed effectively, and often solved completely for the class of linearly representable matroids.  

To this end, we introduce star-basis normal forms that allow to compare representations of abstract (bi-)rigidity matroids over the same field, resulting in a refinement of the weak order relation on the underlying matroids. Utilizing it, we prove that 
\begin{itemize}
\item The generic birigidity matroid is the freest linearly representable abstract $(a,b)$-birigidity matroid. This confirms Mason's conjecture for linear matroids. 
\item Every representable abstract $2$-rigidity matroid admits a rigidity matrix representation. This is a strengthening of the maximality property of the generic $2$-rigidity matroid.
\item  Every representable abstract $2$-rigidity matroid in which every copy of $K_{3,3}$ is a circuit admits a hyperconnectivity matrix representation. It follows that the generic rigidity and hyperconnectivity families $\mathcal{R}_2$ and $\mathcal{H}_2$ are the only linearly representable $2$-rigidity families.
\end{itemize}
In a follow-up paper we will address $d$-dimensional rigidity in general, and the $d=3$ case in particular. 
\end{abstract}

\maketitle

\section{Introduction}

Matroid products were introduced by Lov\'asz~\cite{Lovasz77} and Mason~\cite{Mas81}, with the aim to capture combinatorially multilinear product constructions on vector spaces. Their areas of application encompass multilinear algebra~\cite{Mas81}, matrix completion~\cite{KTT15}, coding theory~\cite{gopalan2017maximally}, rigidity~\cite{Bra24}, and linear rank inequalities~\cite{BGILPS25}.

Let \(M_1\) and \(M_2\) be loopless matroids on ground sets \(E_1\) and \(E_2\), respectively. A matroid $M$ on \(E_1\times E_2\), whose elements may be viewed as the edges of the complete bipartite graph between \(E_1\)
and \(E_2\), is a \emph{product} of $M_1$ and $M_2$ if it satisfies the following axioms.

\begin{enumerate}
	\item[(P1)] For every $e_1\in E_1$, the restriction of $M$ to $\{e_1\}\times E_2$ is naturally isomorphic to $M_2$. For every $e_2\in E_2$, the restriction of $M$ to $E_1\times \{e_2\}$ is naturally isomorphic to $M_1$. 
	\item[(P2)] For all flats $F_1$ of $M_1$ and $F_2$ of $M_2$, $F_1\times F_2$ is a flat of $M$.
\end{enumerate}
If in place of (P2) one imposes the stronger condition $r_M(X_1\times X_2)=r_{M_1}(X_1)r_{M_2}(X_2)$ for all $X_1\subseteq E_1,X_2\subseteq E_2$, then $M$ is a \emph{tensor product}. While Lov\'asz~\cite{Lovasz77} showed that any two matroids admit a product, 
Las Vergnas~\cite{LV81} proved that tensor products need not always exist (see also~\cite{BGILMS25, BGILPS25} for recent developments). 

When
\(M_1\) and \(M_2\) are represented over a common field by vectors
\((u_e)_{e\in E_1}\) and \((v_{e'})_{e'\in E_2}\), the vectors
\[
u_e\otimes v_{e'},
\qquad (e,e')\in E_1\times E_2,
\]
represent a tensor product of $M_1$ and $M_2$. In the important special case when $M_1$ and $M_2$ are uniform matroids represented by generic vectors over a function field, we speak of the \emph{generic tensor product} of two uniform matroids in a given characteristic. When the characteristic of a generic tensor product is not mentioned, it is assumed to be $0$.
Since the characteristic $0$ function field embeds into $\mathbb{C}$, the generic tensor product is $\mathbb{C}$-representable.

We compare matroids on a common ground set by the weak order. For matroids $M$ and $N$,  we say that $N$ is
\emph{freer} than $M$, and  write
$M\preceq N$, if each independent set of $M$ is independent in $N$ (equivalently,
each set dependent in $N$ is dependent in $M$). Mason~\cite{Mas81} asked whether for any two matroids $M_1$ and $M_2$, the set of their products has a unique maximum in the weak order (their \emph{freest} product).  When $M_1$ and $M_2$ admit a tensor product, the conjectured maximum must be a tensor product. Las Vergnas~\cite{LV81} answered Mason's question in the negative. However, the natural case of $M_1$ and $M_2$ being uniform matroids remains open, and its restriction to tensor products has established itself in modern literature under the name \emph{Mason's conjecture} (see~\cite{CJJT25, Bra25}).
\begin{conj}[Mason's conjecture]
The generic tensor product of the uniform matroids ${U}_{m-a,m}$
and ${U}_{n-b,n}$ is their freest matroid tensor product. 
\end{conj}	
A bridge to rigidity theory was recently discovered by Brakensiek, Dhar,
Gao, Gopi, and Larson~\cite{Bra24}. They identified the dual
of the generic tensor product of two uniform matroids as the generic
bipartite rigidity matroid in the same characteristic. In a similar fashion they related the generic symmetric and exterior-power matroids to rigidity and hyperconnectivity matroids, respectively. Subsequently, it was observed by Cruickshank, Jackson, Jord\'an, and Tanigawa~\cite{CJJT25} that the duals of (matroidal) tensor products of uniform matroids are precisely the abstract birigidity matroids. Hence, problems about products of uniform matroids can be translated into those concerning abstract (bi-)rigidity matroids, and vice versa. 

By this duality, the authors of~\cite{CJJT25} reformulated Mason's conjecture as the assertion that the generic birigidity matroid is the freest abstract $(a,b)$-birigidity matroid on a given vertex set. This reformulation also has algorithmic significance.
Brakensiek, Chen, Dhar, and Zhang~\cite{Bra25} constructed an explicit potential matroid lying above the generic birigidity matroid in the weak order, and showed that Mason's conjecture would yield a deterministic polynomial-time
independence algorithm in the generic birigidity and tensor product matroids. Our first result confirms Mason's conjecture within the class of linearly representable matroids.

\begin{theorem}
	\label{thm:global-maximality}
	The generic birigidity matroid
	$
\BR_{a,b}(m,n)
$
	is the freest linearly representable abstract $(a,b)$-birigidity matroid on $[m]\times [n]$. Every representable abstract $(a,b)$-birigidity matroid admits a representation by a standard birigidity matrix over the same field.
\end{theorem}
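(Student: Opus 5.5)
The plan is to pass to the dual side, where the statement becomes one about tensor products of uniform matroids. By the duality of Cruickshank, Jackson, Jord\'an and Tanigawa~\cite{CJJT25}, $M$ is an abstract $(a,b)$-birigidity matroid on $[m]\times[n]$ exactly when $M^*$ is a matroid tensor product of $U_{m-a,m}$ and $U_{n-b,n}$. Duals of $\F$-representable matroids are $\F$-representable. So it suffices to prove the following: every $\F$-representation $(w_{ij})$ of such a tensor product $M^*$ is projectively equivalent to one of the form $u_i\otimes v_j$, with $(u_i)$ representing $U_{m-a,m}$ in $\F^{m-a}$ and $(v_j)$ representing $U_{n-b,n}$ in $\F^{n-b}$. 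Projective equivalence here means a change of coordinates together with nonzero rescaling of each vector.

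Granting this, the orthogonal complement of the row space of the matrix with columns $u_i\otimes v_j$ is cut out by the standard birigidity matrix. Its rows are indexed by the edges $ij$, with entries $\tilde v_j\in\F^{b}$ in block $i$ and $\tilde u_i\in\F^{a}$ in block $j$. Here $\tilde u_i$ and $\tilde v_j$ are Gale duals of the two uniform configurations; one checks this by a dimension count, since the rank is $bm+an-ab=mn-(m-a)(n-b)$. This gives the second sentence of Theorem~\ref{thm:global-maximality}. For the first sentence, every independent set of $M$ corresponds to a nonvanishing maximal minor of a specialized birigidity matrix. The same minor, viewed as a polynomial in the indeterminate entries, is then nonzero over $\F$, hence nonzero over $\mathbb Z$, hence nonzero in characteristic $0$. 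So independent sets of $M$ are independent in $\BR_{a,b}(m,n)$.

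The main step is the tensor reconstruction. Let $W$ be the span of all $w_{ij}$, which has dimension $(m-a)(n-b)$. Put $R_i=\mathrm{span}\{w_{ij}:j\in[n]\}$ and $C_j=\mathrm{span}\{w_{ij}:i\in[m]\}$. By (P1), the family $(w_{ij})_j$ represents $U_{n-b,n}$, so $\dim R_i=n-b$; likewise $\dim C_j=m-a$. The rank formula $r(X_1\times X_2)=r(X_1)r(X_2)$ gives that $W=\bigoplus_{i\in I}R_i$ for every $(m-a)$-set $I$. It also gives that for any $I'$ of size $m-a-1$ the sum $\sum_{i\in I'}R_i$ is a hyperplane-type flat. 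Moreover, $R_i\cap C_j=\langle w_{ij}\rangle$, since $\dim(R_i+C_j)=(n-b)+(m-a)-1$ by the rank of $(\{i\}\times[n])\cup([m]\times\{j\})$.

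Now fix reference sets $I_0=\{1,\dots,m-a\}$ and $J_0=\{1,\dots,n-b\}$. Rescale so that $w_{ij}$ for $i\in I_0$, $j\in J_0$ forms a basis $e_i\otimes f_j$. I then want to show that every other $w_{ij}$ lies in $U_i\otimes V_j$, where $U_i$ and $V_j$ are defined via the flats $R_i$ and $C_j$. Concretely, I expect to show $R_i=u_i\otimes\F^{n-b}$ and $C_j=\F^{m-a}\otimes v_j$, and then intersect these two subspaces.

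I expect the main obstacle to be the consistency of this identification. It is a Segre-type ``fundamental theorem'' argument, and it requires showing that each $R_i$ with $i\notin I_0$ is a ``pure'' subspace $u_i\otimes\F^{n-b}$ rather than an arbitrary $(n-b)$-dimensional subspace. My first attempt is to use the flats $F_1\times F_2$ from (P2) with $|F_1|=m-a-1$. Each such flat spans a subspace of $W$ containing, and of the same dimension as, $H\otimes\F^{n-b}$ for a hyperplane $H\subseteq\F^{m-a}$. Intersecting these for $m-a-1$ suitable choices of $F_1$ avoiding $i$ should force $R_i\subseteq u_i\otimes\F^{n-b}$. Dimension count then gives equality, and the symmetric argument applies to the $C_j$.
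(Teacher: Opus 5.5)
Your outer steps are sound. A representation of $M^*$ by vectors $u_i\otimes v_j$ does dualize, via Gale duals and the dimension count you give, to a standard matrix $\mathcal R_{a,b}(\tilde u,\tilde v)$ representing $M$. Your specialization argument is Proposition~\ref{prop:generic-is-abstract}(ii),(iii). The gap is the tensor reconstruction, which carries the whole content of the theorem and which you leave unproved. Moreover, as you set it up, the claim you aim for is false: fixing $w_{ij}=e_i\otimes f_j$ on $I_0\times J_0$ with arbitrary scalars does not make the $R_i$ pure.

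To see this, take $m-a=n-b=2$ and a tensor configuration $p_i\otimes q_j$ with $p_1=e_1$, $p_2=e_2$, $q_1=f_1$, $q_2=f_2$, and $p_i=(1,s_i)$ with $s_i\neq0$ for $i\geq3$. Apply the automorphism $\phi$ of $\F^2\otimes\F^2$ that multiplies the $e_2\otimes f_2$-coordinate by some $\rho\notin\{0,1\}$, and rescale $w_{22}$ by $\rho^{-1}$. The matroid is unchanged and $w_{kl}=e_k\otimes f_l$ on $I_0\times J_0$. Yet $R_i=\operatorname{span}\{(1,s_i)\otimes f_1,\,(1,\rho s_i)\otimes f_2\}$ is not of the form $u\otimes\F^2$. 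So the scalars on the reference basis are not free: they are determined, up to rescalings of the form $\lambda_k\mu_l$, by vectors outside $I_0\times J_0$.

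Your intersection-of-flats idea cannot detect this. The spans $\sum_{k\in F_1}R_k$ are known to have the form $H\otimes\F^{n-b}$ only when $F_1\subseteq I_0$. If $i\notin F_1$, then $(F_1\cup\{i\})\times[n]$ has full rank, so $R_i\cap\sum_{k\in F_1}R_k=0$ and such intersections cannot contain $R_i$. Taking $F_1\ni i$ instead presupposes the shape of $R_i$.

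The missing idea is the analogue of the paper's normalization. After relabelling, your $I_0\times J_0$ is the paper's co-basis $L_1\times R_1$. The paper's first step, $\gamma_{11}^{ij}=1$, is dually a particular choice of scalars on $\{w_{ij}\}_{I_0\times J_0}$. It makes one fixed vector $w_{i^*j^*}$ with $i^*\notin I_0$, $j^*\notin J_0$ (the paper's $w_{11}$, with $1\in L_0$, $1\in R_0$) have all coordinates equal, and this undoes twists like $\phi$.

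You must still show that every other vector then becomes a pure tensor up to scaling. The paper gets this from Lemma~\ref{lem:proportionality}, that is, from the dependence of the specific copies $((L_0\setminus\{p\})\cup\{i,k\})\times(R_0\cup\{j\})$ and their transposes. These yield the factorization $\gamma_{pq}^{ij}=\kappa_{pq}x_{i,p}y_{j,q}$. The leftover constants $\kappa_{pq}$ are then absorbed by rescaling the rows $pq$, which dually means rescaling the vectors $w_{pq}$. Nothing in your outline does this work, and it is exactly where the $K_{a+1,b+1}$-circuit hypothesis enters.

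Note also that the paper never dualizes. It performs this computation directly in the star basis $\mathcal S$ and then writes down $\mathcal R_{a,b}(u,v)$ explicitly (Lemma~\ref{lem:reverse-conversion}).
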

\begin{corollary}\label{cor:tensor-freest}
The generic tensor product of the uniform matroids ${U}_{m-a,m}$
and ${U}_{n-b,n}$ is their freest linearly representable matroid tensor product. 
\end{corollary}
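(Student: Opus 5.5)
The plan is to deduce the corollary from Theorem~\ref{thm:global-maximality} by matroid duality. Three facts are needed: the correspondence of Cruickshank, Jackson, Jord\'an and Tanigawa~\cite{CJJT25}, the identification of Brakensiek et al.~\cite{Bra24}, and the behaviour of duality on linear representability and on the weak order.

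First, let $M$ be a linearly representable matroid tensor product of $U_{m-a,m}$ and $U_{n-b,n}$ on $[m]\times[n]$, representable over a field $\F$. By~\cite{CJJT25}, the dual $M^*$ is an abstract $(a,b)$-birigidity matroid on $[m]\times[n]$. Since the dual of an $\F$-representable matroid is $\F$-representable (take the orthogonal complement of the row space of a representing matrix), $M^*$ is a linearly representable abstract $(a,b)$-birigidity matroid. Theorem~\ref{thm:global-maximality} then gives $M^*\preceq \BR_{a,b}(m,n)$. By~\cite{Bra24}, $\BR_{a,b}(m,n)$ is the dual of the generic tensor product $T$ of $U_{m-a,m}$ and $U_{n-b,n}$ in characteristic $0$, so $M^*\preceq T^*$.

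Second, I transfer this inequality back through duality. For matroids of equal rank, $N_1\preceq N_2$ is equivalent to $\mathcal B(N_1)\subseteq\mathcal B(N_2)$. Indeed, every independent set of $N_1$ extends to a basis of $N_1$. That basis is independent in $N_2$ and has full rank, so it is a basis of $N_2$. The converse direction is immediate.

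Since bases of the dual are complements of bases, base inclusion is preserved by dualizing. Hence, for equal ranks, $N_1\preceq N_2$ iff $N_1^*\preceq N_2^*$. The ranks do agree here: any tensor product has rank $r_M([m]\times[n])=(m-a)(n-b)$ by the tensor rank condition, so $M$ and $T$ have the same rank, and therefore so do $M^*$ and $T^*$. Thus $M^*\preceq T^*$ yields $M\preceq T$.

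Finally, $T$ is itself a linearly representable tensor product, being $\mathbb C$-representable as noted in the introduction. So $T$ is the freest linearly representable matroid tensor product, which is the claim.

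There is no real obstacle beyond the duality bookkeeping. The one point needing care is the equal-rank step, since weak order is reversed or lost under duality without it; the tensor rank formula ensures the ranks match. It is also worth checking that the correspondence of~\cite{CJJT25} is stated for the same parameter convention $(a,b)$ as $U_{m-a,m}$ and $U_{n-b,n}$, which is how the introduction phrases it.
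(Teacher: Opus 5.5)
Your proposal is correct and follows the same route as the paper: pass to duals via the tensor--birigidity correspondence of~\cite{Bra24,CJJT25}, apply Theorem~\ref{thm:global-maximality}, and dualize back. The paper compresses the return step into the phrase ``dual form''. You make explicit the argument that justifies it: for matroids of equal rank, the weak order is base inclusion, which complementation preserves.
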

Thus the problem of polynomially recognizing independence in the generic tensor products of uniform matroids reduces to that of linear representability of a concrete family of matroids.

\begin{corollary}\label{cor:potential-recognition}
Suppose that, for every $a,b\geq 2$, $m\geq a+1$, and
$n\geq b+1$, the potential matroid of Brakensiek et
al.~\cite{Bra25} associated with $\BR_{a,b}(m,n)$ is linearly
representable. Then, for every fixed $a,b\geq 2$, the independence
problem for $\BR_{a,b}(m,n)$ and for the generic tensor product
$U_{m-a,m}\otimes U_{n-b,n}$ can be decided deterministically in time
polynomial in $m$ and $n$.
\end{corollary}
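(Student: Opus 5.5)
The approach is to derive the statement from Theorem~\ref{thm:global-maximality}, Corollary~\ref{cor:tensor-freest}, and the results of Brakensiek, Chen, Dhar and Zhang~\cite{Bra25}. The key input from~\cite{Bra25} is twofold. First, the potential matroid $P_{a,b}(m,n)$ they construct is an abstract $(a,b)$-birigidity matroid on $[m]\times[n]$ satisfying $\BR_{a,b}(m,n)\preceq P_{a,b}(m,n)$. Second, independence in $P_{a,b}(m,n)$ can be decided deterministically in time polynomial in $m,n$ for fixed $a,b$; the algorithmic consequence of Mason's conjecture stated in~\cite{Bra25} is obtained exactly by showing that $P$ coincides with $\BR$ once $P$ is known to lie below the freest abstract birigidity matroid. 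I would begin by recalling these two facts precisely, and checking that the abstract birigidity axioms used in~\cite{Bra25} agree with those of~\cite{CJJT25} used here.

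Next, assume $P_{a,b}(m,n)$ is linearly representable. By Theorem~\ref{thm:global-maximality}, $\BR_{a,b}(m,n)$ is the freest linearly representable abstract $(a,b)$-birigidity matroid on $[m]\times[n]$, so $P_{a,b}(m,n)\preceq \BR_{a,b}(m,n)$. Combined with the reverse inequality from~\cite{Bra25}, and since the weak order is antisymmetric (same independent sets means the same matroid), we get $P_{a,b}(m,n)=\BR_{a,b}(m,n)$. Hence the polynomial-time independence oracle for $P$ decides independence in $\BR_{a,b}(m,n)$.

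For the tensor product, I use the duality of~\cite{Bra24}: $U_{m-a,m}\otimes U_{n-b,n}$ is dual to $\BR_{a,b}(m,n)$. Independence of $X$ in a matroid $M$ is equivalent to the complement $E\setminus X$ being spanning in $M^*$. That is, it amounts to $r_{M^*}(E\setminus X)=r(M^*)$. The rank of a set can be computed greedily with at most $|E|=mn$ independence queries, and $r(M^*)$ is known explicitly (or is itself computed greedily). So an independence oracle for $\BR$ yields one for the tensor product with polynomial overhead. This makes Corollary~\ref{cor:tensor-freest} not strictly necessary, though it is consistent with the argument.

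The main obstacle I expect is bookkeeping rather than mathematics. One must verify that~\cite{Bra25}'s potential matroid is genuinely an abstract birigidity matroid in the sense of Theorem~\ref{thm:global-maximality}, and that its comparison with $\BR$ is stated in the direction $\BR\preceq P$. The parameter ranges ($a,b\ge2$, $m\ge a+1$, $n\ge b+1$) must also match. If~\cite{Bra25} only asserts that $P$ lies weakly above $\BR$ and is an abstract birigidity matroid, the argument above suffices. It should also be noted that the representation field is irrelevant: Theorem~\ref{thm:global-maximality} covers representability over any field.
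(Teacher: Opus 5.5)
Your derivation of $P_{a,b}(m,n)=\BR_{a,b}(m,n)$ matches the paper's. Representability together with Theorem~\ref{thm:global-maximality} gives $P\preceq\BR$, \cite{Bra25} gives $\BR\preceq P$, and the weak order is antisymmetric. Your passage to $U_{m-a,m}\otimes U_{n-b,n}$ via duality and a greedy rank oracle is also the paper's argument.

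The gap is in the algorithmic input you attribute to \cite{Bra25}. You assume an unconditional deterministic polynomial-time independence oracle for $P_{a,b}(m,n)$, so that only the single equality at the given quadruple is needed. The paper does not use, or suggest the existence of, such an oracle. It relies on Algorithm~1 of \cite{Bra25}, whose running-time analysis is unconditional but whose correctness proof (Lemma~5.18 there) invokes the potential--birigidity equality $P=\BR$ in two places. It needs it at $(m,n,a,b)$, and also at auxiliary parameters obtained after scaling and transposition. The corollary's hypothesis signals this: it asks for linear representability of the potential matroid for \emph{every} admissible $(a,b,m,n)$. That would be superfluous if an oracle for $P$ plus one equality sufficed, and your argument never uses the hypothesis beyond the single given quadruple.

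The repair is short. Your equality argument is uniform in the parameters, so under the stated hypothesis it gives $P_{a',b'}(m',n')=\BR_{a',b'}(m',n')$ for every admissible quadruple. In particular it holds at all the auxiliary instances used in Lemma~5.18 of \cite{Bra25}. This validates the correctness proof of Algorithm~1, and together with its unconditional runtime bound you get the polynomial-time test for $\BR_{a,b}(m,n)$. Your duality step then finishes exactly as in the paper. So the essential bookkeeping item is not only the direction $\BR\preceq P$ and the parameter ranges. You must also pin down which instances of $P=\BR$ the algorithm's correctness depends on, and check that the hypothesis supplies all of them.
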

To prove Theorem~\ref{thm:global-maximality}, we take an arbitrary representation $\mathcal{A}$ of an abstract $(a,b)$-birigidity matroid and, breaking the symmetry, express $\mathcal{A}$ in a designated `star basis'. We derive certain proportionality relations for its coefficients, and show that under these conditions the star-basis representation can be converted into a standard birigidity matrix form. Since the generic birigidity matroid is given by a birigidity matrix with generic parameters, Theorem~\ref{thm:global-maximality} follows. 

We apply a similar idea in two-dimensional rigidity to obtain the following strengthening of the maximality property of the generic $2$-rigidity matroid, for linear matroids.
\begin{theorem}\label{thm:rigidity-realization}
Every linearly representable abstract $2$-rigidity matroid can be represented by a standard two-dimensional infinitesimal rigidity matrix over the same field.
\end{theorem}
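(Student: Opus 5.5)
We work directly from the axioms of an abstract $2$-rigidity matroid $M$ on $E(K_n)$: every copy of $K_5$ is a circuit, and $M$ has rank $2n-3$. Let $M$ be represented over $\F$ by vectors $(r_e)_{e\in E(K_n)}$ in $\F^{2n-3}$. The overall plan mirrors the birigidity case: put the representation into a \emph{star-basis normal form}, derive proportionality constraints on the coefficients from the $K_5$-circuit condition, and then read off a point configuration $p\colon[n]\to\F^2$ whose rigidity matrix represents $M$.

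\textbf{Step 1 (star basis).} First I would check that the double star $B=\{1i: i\ge 2\}\cup\{2i: i\ge 3\}$ is a basis of $M$. It has $2n-3$ edges, and it is independent because it is a $2$-tree (Henneberg $0$-extensions), which is independent in any abstract $2$-rigidity matroid. This should follow from the $K_5$ axiom by a standard argument, or from an earlier result of the paper.

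After a change of coordinates, the vectors $r_e$ with $e\in B$ become the unit vectors. Each other edge $ij$ with $3\le i<j$ is then expressed as
\[
r_{ij}=\alpha_{ij}r_{1i}+\alpha'_{ij}r_{1j}+\beta_{ij}r_{2i}+\beta'_{ij}r_{2j}+(\text{other basis terms}).
\]

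\textbf{Step 2 (support and proportionality).} The $K_4$ on $\{1,2,i,j\}$ is independent and spans the rank-$5$ flat containing its edges. Hence $r_{ij}$ lies in the span of $r_{1i},r_{1j},r_{2i},r_{2j},r_{12}$. Since $r_{12}\in B$, the "other basis terms" reduce to a coefficient $\gamma_{ij}$ on $r_{12}$.

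Next, apply the $K_5$ circuit condition on $\{1,2,i,j,k\}$. The relation
\[
r_{ij}+r_{jk}+r_{ik}\in\operatorname{span}(\text{star edges on }\{1,2,i,j,k\})
\]
must hold with exactly rank $7$, and every $9$-edge subset must be independent. From this I expect to extract multiplicative cocycle conditions. For instance, $\alpha_{ij}/\beta_{ij}$ should factor as a function of $i$ times a function of $j$, and similar triangle identities should hold for the other coefficients.

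\textbf{Step 3 (matching a rigidity matrix).} In the rigidity matrix of $p$, the row of edge $ij$ is $(p_i-p_j)$ placed in the block of $i$ and its negative in the block of $j$. Expressing these rows in the double-star basis gives explicit rational coefficients: the coordinates of $p_j-p_i$ relative to the frame $\{p_i-p_1,\,p_i-p_2\}$.

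I would set $p_1=(0,0)$ and $p_2=(1,0)$, then solve for $p_3,\dots,p_n$ so that the coefficients match those of Step 1. Here I may need to rescale the basis vectors $r_{1i}$ and $r_{2i}$, which is allowed since rescaling does not change the matroid. The cocycle relations from Step 2 should make this system consistent.

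Finally, the rigidity matrix of $p$ and the given representation then agree up to row scaling and an invertible change of coordinates, so they represent the same matroid.

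\textbf{Main obstacle.} The hardest part is Step 2. The key difficulty is showing that the $K_5$-circuit condition, together with the independence of all its proper subsets, forces exactly the algebraic relations needed for a consistent $p$, including genericity conditions such as $p_i\ne p_j$ and no degenerate collinearities that would create extra circuits.

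Concretely, I would first solve the $n=5$ case symbolically. I would then show the relations propagate over triples $\{i,j,k\}$, and handle field-characteristic edge cases, such as characteristic $2$, separately if they arise.
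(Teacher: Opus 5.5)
Your architecture matches the paper's: the double star $B$, coordinates of the non-basis columns relative to $B$, then points with $p_1=(0,0)$, $p_2=(1,0)$. However, the argument is built on the wrong axiom. An abstract $2$-rigidity matroid has rank $2n-3$ and every copy of $K_4$ is a circuit; $K_5$-circuits are the $3$-dimensional axiom. As a result Step 2 contradicts itself. $K_{\{1,2,i,j\}}$ has six edges in a rank-$5$ flat, so it cannot be independent. Likewise, no $9$-edge subset of $K_5$ can be independent when $K_5$ has rank $7$. With the correct axiom, $K_{\{1,2,i,j\}}$ is the fundamental circuit of $ij$ with respect to $B$, so all five coefficients on $12,1i,1j,2i,2j$ are non-zero. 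This also gives Step 1 at once, since $B$ spans and $|B|=2n-3$.

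The relations that actually determine the normal form come from the dependence of $K_{\{2,i,j,k\}}$ and $K_{\{1,i,j,k\}}$ for distinct $i,j,k\in I=\{3,\dots,n\}$, which your plan does not identify. Write $\alpha_{ij}$ for the coefficient of $r_{1i}$ in $r_{ij}$, so your $\alpha'_{ij}$ is $\alpha_{ji}$. After discarding the unit columns $2i,2j,2k$, the columns $ij,ik,jk$ restricted to rows $12,1i,1j,1k$ must be dependent. The minor on rows $1i,1j,1k$ gives $\alpha_{ij}\alpha_{jk}\alpha_{ki}+\alpha_{ik}\alpha_{ji}\alpha_{kj}=0$. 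This is a multiplicative cocycle condition on $-\alpha_{ij}/\alpha_{ji}$; by contrast, the factorization of $\alpha_{ij}/\beta_{ij}$ you guess fails already for the generic rigidity matroid. Rescaling the rows $1i$ therefore makes the coefficients skew, $\lambda_{ji}=-\lambda_{ij}$. The minor including row $12$ then gives $1/\lambda_{ij}-1/\lambda_{ik}+1/\lambda_{jk}=0$, hence $1/\lambda_{ij}=x_i-x_j$. The $\beta$'s are handled identically, producing $C_{ij}=e_{12}+\frac{e_{1i}-e_{1j}}{x_i-x_j}+\frac{e_{2i}-e_{2j}}{y_j-y_i}$ with the $x_i$ distinct and the $y_i$ distinct.

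Step 3 is also left unresolved, and this is where the explicit construction lives. Take $p_i=\bigl(\frac{x_i}{x_i-y_i},\frac{1}{x_i-y_i}\bigr)$ for $i\in I$. A row-by-row check expresses the rigidity column $ij$ as an explicit combination of $12,1i,1j,2i,2j$, which after non-zero column scalings is exactly the normal-form column. This requires $x_i\neq y_i$ for every $i\in I$, so that $p_i$ exists; $p_i$ then lies off the line $p_1p_2$, so $B$ is independent in $R(p)$. You arrange this by replacing $y_i$ with $ay_i+c$, which amounts to rescaling the rows $2i$ (compensated on the basis columns) and so preserves the matroid. The genuine edge case is a finite field with $|I|=|\F|$. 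There $a$ must be chosen to force a coincidence among the values $x_i-ay_i$, so that some $c$ avoids all of them; characteristic plays no role. Conversely, your worry about degenerate collinearities creating extra circuits is misplaced. Once $B$ is independent in both matrices and every non-basis column has the same $B$-coordinates, the two column matroids are equal, with no genericity required.
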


Imposing additionally that every \(K_{3,3}\) is a circuit results in more restrictions on the parameters in the star-basis form. This yields the analogous result for Kalai's $2$-hyperconnectivity matroid.

\begin{theorem}\label{thm:main-hyper-realization}
Every linearly representable abstract $2$-rigidity matroid \(M\) on $n\geq 6$ vertices, in which every copy of $K_{3,3}$ is a circuit, can be represented by a standard two-dimensional hyperconnectivity matrix over the same field.
\end{theorem}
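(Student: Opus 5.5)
The plan is to reduce to the normal form behind Theorem~\ref{thm:rigidity-realization}, and then show that the extra $K_{3,3}$ condition forces the parameters into hyperconnectivity shape.

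Recall the standard hyperconnectivity matrix in dimension $2$. Vertex $i$ carries a vector $p_i\in\F^2$. The row of edge $ij$ has $p_j$ in the block of $i$, $-p_i$ in the block of $j$, and zeros elsewhere. In the rigidity matrix the row of $ij$ instead has $p_i-p_j$ and $p_j-p_i$ in the two blocks. So the task is to reach a representation whose row for $ij$ has the "swapped" form.

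\textbf{Step 1: star-basis normal form.} Fix a representation $\mathcal{A}$ of $M$ over $\F$, with $M$ of rank $2n-3$ on $E(K_n)$. The star of vertex $1$, together with edge $23$, is a basis of $M$. In the coordinates of this star basis (as in the proof of Theorem~\ref{thm:rigidity-realization}), we derive proportionality relations for the coefficients. The source of these relations is that every $K_4$ is a circuit, which is the abstract $2$-rigidity condition.

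\textbf{Step 2: use the $K_{3,3}$ circuits.} The key question is where the rigidity and hyperconnectivity forms diverge. We rely on the following facts.
\begin{itemize}
\item Both $\mathcal{R}_2$ and $\mathcal{H}_2$ have every $K_4$ as a circuit and rank $2n-3$.
\item In the generic rigidity matroid, $K_{3,3}$ is a basis rather than a circuit. Its $9$ edges span rank $9$ on $6$ vertices, since $2\cdot 6-3=9$.
\item In $\mathcal{H}_2$, $K_{3,3}$ is a circuit.
\end{itemize}
So I expect the star-basis analysis to give a family of normal forms depending on a small number of "shear/twist" parameters. For each $K_{3,3}$, the condition of being dependent is a determinant that vanishes identically. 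Expanding this determinant in the normal-form coordinates should factor into pieces. One factor encodes the rigidity-like "translation" component. Forcing it to vanish for all $K_{3,3}$ should kill that component and leave the hyperconnectivity shape. This is the reason for the hypothesis $n\geq 6$: it guarantees that $K_{3,3}$ subgraphs exist at all.

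\textbf{Step 3: normalize.} Change basis on each vertex block by an invertible $2\times 2$ matrix and rescale rows, which are permitted operations on a representation. This converts the constrained star-basis form into the standard hyperconnectivity matrix. A final check confirms the resulting matrix represents $M$, since both matrices have the same row space in the star-basis coordinates.

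\textbf{Main obstacle.} I expect the hardest step to be Step 2. The difficulty is showing that the $K_{3,3}$-circuit equations, taken together with the proportionality relations, have only the hyperconnectivity solution. In particular one must exclude mixed or degenerate parameter choices, such as coincident points or special configurations in small characteristic.

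My first approach is to work on $6$ vertices, with $K_{3,3}$ having bipartition $\{1,2,3\}\mid\{4,5,6\}$ and its relabelings. I would compute the $9\times 9$ determinant in normal-form coordinates with the star at vertex $1$ and read off the factor that must vanish. Then I would propagate to general $n$ by applying the same argument to every $6$-subset containing vertex $1$. Degenerate cases would be ruled out using the fact that every $K_4$ is a circuit, which forces $K_3$ to be independent and hence the points to be pairwise distinct in the appropriate projective sense.
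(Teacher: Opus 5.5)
Your outline has the same architecture as the paper: a star-basis normal form from the $K_4$ circuits, then constraints from $K_{3,3}$, then an explicit conversion. However, the step that carries the theorem is not supplied, and what you do specify contains errors.

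\textbf{The basis and the missing Step~2.} The star of vertex $1$ plus the edge $23$ has only $n$ edges, while $\rank M=2n-3$, so it is not a basis. The basis that works is the double star $B=\{12\}\cup\{1i,2i:3\le i\le n\}$. In it the normal form reads $C_{ij}=e_{12}+\frac{e_{1i}-e_{1j}}{x_i-x_j}+\frac{e_{2i}-e_{2j}}{y_j-y_i}$.

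Your Step~2 is only the expectation that a $9\times 9$ determinant ``factors'' with a ``translation'' factor; no such factorization exists or is identified. The missing idea is as follows.
\begin{itemize}
\item For the $K_{3,3}$ with bipartition $\{1,2,3\}\sqcup\{j,k,\ell\}$, six of the nine edges lie in $B$. Its dependence therefore collapses to the vanishing of the $3\times3$ determinant with columns $\bigl(1,\tfrac{1}{x_3-x_t},\tfrac{1}{y_t-y_3}\bigr)$, $t=j,k,\ell$.
\item Since $n\ge6$, this says that all points $\bigl(\tfrac{1}{x_3-x_t},\tfrac{1}{y_t-y_3}\bigr)$, $t\in I\setminus\{3\}$, lie on one line $v=\alpha u+\beta$ with $\alpha\neq0$. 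Equivalently, $y$ is a fractional-linear function of $x$.
\item Affine changes of $x$ and of $y$ are realized by row scalings of $S(x,y)$. So one may pass to $X_r=\alpha+\beta(x_3-x_r)$ and $Y_r=(1-\beta(y_r-y_3))/\alpha$, which satisfy $Y_r=1/X_r$.
\item One then checks directly that $S(X,1/X)$ is the $B$-coordinate form of $H(q)$ with $q_1=(1,0)$, $q_2=(0,1)$, $q_i=(1,X_i)$.
\end{itemize}
Your Step~3, as written, does not apply either. A star-basis matrix has rows indexed by edges of $B$, not by vertex blocks. What is needed is an explicit choice of points and a column-by-column coordinate check.

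\textbf{The degenerate case.} Your plan for degenerate cases aims at the wrong degeneracy. The passage to $Y=1/X$ requires $\beta\neq0$. The case $\beta=0$, i.e.\ $y_r=\lambda x_r+\mu$ for all $r\in I$, is compatible with:
\begin{itemize}
\item every such $K_{3,3}$ being \emph{dependent}, since the points still lie on a line;
\item all $x_i$ being distinct and all $y_i$ being distinct.
\end{itemize}
So neither the determinant conditions nor ``pairwise distinct points'' rule it out. It has to be excluded by \emph{minimality}. If $\beta=0$, then
$(x_i-x_k)C_{ik}-(x_i-x_\ell)C_{i\ell}-(x_j-x_k)C_{jk}+(x_j-x_\ell)C_{j\ell}=0$.
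Hence the $4$-cycle on $i,k,j,\ell$ is dependent. This contradicts that the $K_{3,3}$ on $\{1,i,j\}\sqcup\{2,k,\ell\}$ is a circuit; the paper uses exactly this. Your instinct to use minimality of $K_4$ would also work here, but through the independence of this $C_4\subset K_{\{i,j,k,\ell\}}$, not through independence of $K_3$.

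Without these two ingredients the proposal does not yet prove the theorem. They are the collinearity reduction giving a fractional-linear relation between $y$ and $x$, and the circuit-minimality argument excluding the affine case.
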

A conjecture of Jackson and Tanigawa~\cite{Jac24} states that the generic $2$-hyperconnectivity matroid is the freest among graph matroids in which every $K_4$ and $K_{3,3}$ are circuits. Theorem~\ref{thm:main-hyper-realization} shows that any potential counterexample must be either non-representable or of lower total rank. A strong counterexample that is freer than the generic $2$-hyperconnectivity matroid has to be non-representable. 

Kalai~\cite{Kal85} defined $d$-rigidity families under the name ``hypergraphic sequences'' and suggested their further study. Using modern language, a $d$-rigidity family is a family $\mathcal{F}$ of finite graphs\footnote{Throughout this paper, graphs are identified with their sets of edges, and hence are assumed to have no isolated vertices.} such that for every $n$, the collection $\mathcal{F}^n$ of all (labelled) copies of graphs from $\mathcal{F}$ with vertices in $[n]$ forms an independence family of an abstract $d$-rigidity matroid (in particular, every $d$-degenerate graph is in $\mathcal{F}$, while $K_{d+2}$ is not). For $d=1$ there is a unique such family, namely the forests. Hence, in general, $d$-rigidity families can be viewed as natural generalizations of the forests, capturing $d$-degenerate graphs matroidally. 

For $d=2$ the two known $d$-rigidity families are the generic rigidity family $\mathcal{R}_2$ and the hyperconnectivity family $\mathcal{H}_2$, and it was conjectured in~\cite{Tyo26} that no further $2$-rigidity families exist. Using the characteristic-independent description of $2$-hyperconnectivity and the clarification of the role of \(K_{3,3}\) in \(2\)-rigidity families~\cite{Tyo26}, we obtain the following conclusion, further supporting this conjecture.

\begin{theorem}\label{thm:representable-families}
	Suppose $\mathcal{F}$ is a $2$-rigidity family different from $\mathcal{R}_2$ and $\mathcal{H}_2$.
Then there exists $n_0=n_0(\mathcal F)$ such that, for every
$n\geq n_0$, the matroid whose independence family is $\mathcal F^n$
is not linearly representable.
\end{theorem}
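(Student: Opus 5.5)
The plan is to combine Theorems~\ref{thm:rigidity-realization} and~\ref{thm:main-hyper-realization} with the dichotomy from~\cite{Tyo26} on the role of $K_{3,3}$ in $2$-rigidity families. Fix a $2$-rigidity family $\mathcal F\neq\mathcal R_2,\mathcal H_2$, and write $M_n$ for the abstract $2$-rigidity matroid on $[n]$ with independence family $\mathcal F^n$. Note that the family is closed under taking subgraphs and under relabelling. Hence if $\mathcal F^n$ differs from $\mathcal R_2^n$ (respectively $\mathcal H_2^n$) for some $n$, it differs for all larger $n$. Since $\mathcal F\ne \mathcal R_2$, there is a graph $G_1$ lying in exactly one of $\mathcal F,\mathcal R_2$. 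Likewise there is a graph $G_2$ witnessing $\mathcal F\neq\mathcal H_2$. I would set $n_0$ to be at least the number of vertices of $G_1$ and of $G_2$, and at least $6$. Then for $n\ge n_0$, $M_n$ differs from both $\mathcal R_2(n)$ and $\mathcal H_2(n)$ as a matroid on $E(K_n)$.

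Now suppose, for contradiction, that $M_n$ is linearly representable over some field $\F$ for some $n\ge n_0$. I would use the result of~\cite{Tyo26} on $K_{3,3}$, which I expect says that in a $2$-rigidity family either $K_{3,3}$ is independent (it has $9=2\cdot 6-3$ edges) or every copy of $K_{3,3}$ is a circuit.

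\textbf{Case 1: every copy of $K_{3,3}$ is a circuit.} Theorem~\ref{thm:main-hyper-realization} gives a representation of $M_n$ by a standard hyperconnectivity matrix over $\F$, so $M_n$ is a weakening of the generic $2$-hyperconnectivity matroid in characteristic $\operatorname{char}\F$. By the characteristic-independent description of $\mathcal H_2$ from~\cite{Tyo26}, this generic matroid is $\mathcal H_2(n)$ in every characteristic. So $\mathcal F^n\subseteq \mathcal H_2^n$.

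\textbf{Case 2: $K_{3,3}$ is independent.} Theorem~\ref{thm:rigidity-realization} gives a rigidity-matrix representation, so $\mathcal F^n\subseteq\mathcal R_2^n$, since generic $2$-rigidity is characteristic-independent by Laman's theorem.

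The main obstacle is upgrading these inclusions to equality. A proper weakening of $\mathcal R_2(n)$ or $\mathcal H_2(n)$ would have to make some independent set of the generic matroid dependent, yet remain an abstract $2$-rigidity matroid in which all $2$-degenerate graphs are independent. I would first invoke the rank condition: an abstract $2$-rigidity matroid has rank $2n-3$, so a specific realization of $M_n$ has full rank $2n-3$.

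To conclude, I would need the fact from~\cite{Tyo26} that a $2$-rigidity family is determined by its status on a small set of basic graphs. In the rigidity case, this is the Laman-type characterization: every Laman graph is built by $0$- and $1$-extensions, and $1$-extensions preserve independence in any abstract $2$-rigidity matroid that is contained in $\mathcal R_2$ and has the right rank. I expect this to be the content needed, or an analogous statement for $\mathcal H_2$ with $K_{3,3}$ circuits. That would force $\mathcal F^n=\mathcal R_2^n$ or $\mathcal H_2^n$, contradicting the choice of $n_0$.

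If that structural input is unavailable, I would instead argue within the star-basis normal form. Equality of the matroid with the generic one for generic parameters would follow from the freedom to choose parameters. However, the matroid is fixed, so this approach requires showing that the family condition, applied uniformly over all $n$, forces genericity. I regard this as the hardest step.
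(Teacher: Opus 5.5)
Your Case~1 inclusion $\mathcal F^n\subseteq\mathcal H_2^n$ matches the paper. However, the step you flag as the main obstacle is a genuine gap, and the route you suggest for it fails.

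The lemma you want is that $1$-extensions preserve independence in every abstract $2$-rigidity matroid of rank $2n-3$ lying below $R_{2,n}$. This is false. By the maximality of $R_{2,n}$, every abstract $2$-rigidity matroid lies below $R_{2,n}$, including $H_{2,n}$. In $H_{2,n}$ the graph $K_{3,3}$ is a circuit. Yet $K_{3,3}$ is a $1$-extension of $K_{3,2}$ plus one edge inside the $3$-side, which is $2$-degenerate and hence independent.

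Your plan would also give the theorem with $n_0$ equal to the vertex counts of the witnesses, which is stronger than the statement. The upgrade from inclusion to equality has to use the symmetry of $M_n$ and the size of $n$ relative to a witness graph, and your plan uses neither.

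Your Case~2 carries no information, since $\mathcal F^n\subseteq\mathcal R_2^n$ holds for every abstract $2$-rigidity matroid. The paper avoids this case by using the stronger form of the result of \cite{Tyo26} (Theorem~1.6 there): if $\mathcal F\neq\mathcal R_2$, then $K_{3,3}$ is a circuit in $\mathcal F$. So only your Case~1 ever occurs.

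The missing idea is the quantitative bound of Theorem~\ref{thm:finite-bound}. This is exactly the ``family condition forces genericity'' step you left open.
\begin{enumerate}
\item Assume $M_n$ is representable for arbitrarily large $n$. Your Case~1 then gives $\mathcal F\subseteq\mathcal H_2$, so $\mathcal F\neq\mathcal H_2$ provides some $G\in\mathcal H_2\setminus\mathcal F$ on $v$ vertices.
\item Take the normalized realization of Proposition~\ref{prop:normalized-hyper-realization}: $q_1=(1,0)$, $q_2=(0,1)$ and $q_i=(1,x_i)$ for $n-2$ distinct $x_i$.
\item Since $G$ is $\mathcal H_2$-independent, Lemma~\ref{lem:PF-polynomial} gives a maximal minor $P_{G,\F}$ of $A_G(z)$ that is a non-zero polynomial with $\deg_{z_r}P_{G,\F}\leq\Delta(G)$.
\item Since $M_n$ is symmetric, every copy of $G$ on vertices in $I$ is dependent. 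Hence $P_{G,\F}$ vanishes at every $v$-tuple of distinct elements of $X=\{x_i:i\in I\}$.
\item Multiplying by the Vandermonde polynomial and applying Lemma~\ref{lem:injective-grid} gives $n-2\leq\Delta(G)+v-1$. This is impossible once $n>\max\{6,\Delta(G)+v+1\}$.
\end{enumerate}
The contradiction comes from symmetry at a single sufficiently large $n$, not from an equality of matroids at each $n$. The threshold depends on $\Delta(G)$ as well as on $|V(G)|$.
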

The paper follows the order of these results.  Section~\ref{sec:birigidity}
develops the birigidity normal form and proves Theorem~\ref{thm:global-maximality}.  Sections~\ref{sec:two-rigidity} and~\ref{sec:hyperconnectivity} apply the star-basis method to two-dimensional rigidity and hyperconnectivity, respectively.

\subsection*{Statement on AI use}
The idea to use star bases in rigidity and birigidity is my own and predates the wide-scale use of AI tools in mathematical research. I carried out the birigidity calculations by hand and reached the conclusion of Theorem~\ref{thm:global-maximality}. I then used ChatGPT 5.5 Pro to verify and simplify the argument. In two-dimensional rigidity, I first attempted the calculations on my own but was not able to carry them out, as I did not realize that only a small subset of $K_4$-conditions sufficed to be checked. This transpired in the course of a dialogue with ChatGPT 5.5 Pro, which resulted in the proofs of Theorem~\ref{thm:rigidity-realization} and~\ref{thm:main-hyper-realization}. The idea of Theorem~\ref{thm:representable-families} and of its proof are mine, ChatGPT 5.6 Sol carried out the steps of the proof. After the first draft of the paper was written by ChatGPT 5.6 Sol, I have carefully re-worked it and re-written almost every word of it in human language. I revised for typos with ChatGPT 6 Astra. I take full responsibility for the content and wording of this paper.

\section{Birigidity}\label{sec:birigidity}
Throughout the paper, if $p=0$, put $k_p=\mathbb Q$, and if $p$ is
prime, put $k_p=\mathbb F_p$. We recall a well-known fact about matroid representations.
\begin{lemma}\label{lem:scalar-extension}
	Suppose $K\subseteq L$ are fields, and let $A$ be a matrix with entries
	in $K$. Then the column matroid of $A$ is the same over $K$ and $L$.
\end{lemma}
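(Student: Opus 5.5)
The plan is to reduce the claim to the statement that linear independence of a finite family of columns is decided by a rank computation, and that this rank does not change when we enlarge the field. Concretely, a set $S$ of columns of $A$ is independent over a field $F\in\{K,L\}$ exactly when the submatrix $A_S$ has rank $|S|$ over $F$. That in turn holds exactly when some $|S|\times|S|$ minor of $A_S$ is nonzero. So it suffices to show that the rank of a matrix with entries in $K$ is the same whether it is computed over $K$ or over $L$.

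For this I will use determinants, which is the cleanest route. Every minor of $A_S$ is a polynomial with integer coefficients in the entries of $A$. Its value therefore lies in $K$, and evaluating it in $L$ gives the same element, because $K\subseteq L$ is a subfield and the field operations agree. Hence a minor is nonzero in $K$ if and only if it is nonzero in $L$. Since rank equals the largest size of a nonvanishing minor over any field, $\rank_K A_S=\rank_L A_S$. It follows that the independent sets, and hence the column matroid, coincide.

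Alternatively, one could note that Gaussian elimination on $A_S$ uses only field operations inside $K$. It produces the same echelon form over $L$, so the number of pivots is the same. I would mention this only as a remark.

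There is no real obstacle here. The only point that needs a little care is stating clearly that "independent over $L$" means there is no nontrivial $L$-linear dependency. This might a priori seem stronger than ruling out $K$-linear dependencies, and the minor criterion is exactly what handles it: it is field-independent once the entries lie in $K$.
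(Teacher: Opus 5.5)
Your proposal is correct and follows essentially the same route as the paper: independence of a column set is characterized by the non-vanishing of some $|S|\times|S|$ minor. Each such minor lies in $K$, so it is zero over $K$ exactly when it is zero over $L$.
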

\begin{proof}
	A set of $r$ columns is independent over either field if and only if
	some square submatrix formed from these columns and $r$ rows has
	non-zero determinant. Every such determinant belongs to $K$, so whether it is zero does not change when interpreting the coefficients over a larger field. Hence every set of columns is independent over $K$ if and only if it is independent over $L$.
\end{proof}
\subsection{Abstract birigidity matroids and the standard matrix}

Let $a,b,m,n$ be integers satisfying
\[
a,b\geq 2,\qquad m\geq a+1,\qquad n\geq b+1.
\]
Put
\[
L=[m],\qquad R=[n].
\]
The edge set of $K_{m,n}$ is identified with $L\times R$, where we interpret $L$ and $R$ as `left' and `right' vertices, respectively.  An edge is
written $ij$, where $i\in L$ and $j\in R$.  

Set
\begin{equation*}
r_{a,b}(m,n)=mn-(m-a)(n-b)
= an+bm-ab.
\end{equation*}
\begin{definition}\label{def:abstract-birigidity}
An \emph{abstract $(a,b)$-birigidity matroid} on $L\times R$ is a
matroid $M$ satisfying the following conditions:
\begin{enumerate}
\item $\rank M=r_{a,b}(m,n)$;
\item for every $A\subseteq L$ and $D\subseteq R$ with
      $|A|=a+1$ and $|D|=b+1$, the edge set $A\times D$ is an
      $M$-circuit.
\end{enumerate}
\end{definition}
\noindent
We next recall the standard birigidity matrix representation. Let $\F$ be a field.
Given, for every $i\in L$, a vector
\[
u_i=(u_{i,1},\ldots,u_{i,a})\in\F^a,
\]
and, for every $j\in R$, a vector
\[
v_j=(v_{j,1},\ldots,v_{j,b})\in\F^b,
\]
write
\[
u=(u_i)_{i\in L},\qquad v=(v_j)_{j\in R}.
\]
\begin{definition}[The standard $(a,b)$-birigidity matrix, transposed]
\label{def:standard-matrix}

Given $u$ and $v$ as above, the matrix
\[
\mathcal R_{a,b}(u,v)
\]
is the $(bm+an)\times mn$-matrix as follows. Its columns $h_{ij}$ are indexed by $ij\in L\times R$.  Its rows are labelled as
\[
U_{i,q}\quad(i\in L,\ q\in [b])
\]
and
\[
V_{p,j}\quad(p\in[a],\ j\in R).
\]
For $i\in L$ and $j\in R$, the entries of the column $h_{ij}$
are
\begin{equation*}
(\mathcal R_{a,b}(u,v))_{U_{i,q},ij}=v_{j,q}
\quad(1\leq q\leq b)
\end{equation*}
and
\begin{equation}\label{eq:standard-column-right}
(\mathcal R_{a,b}(u,v))_{V_{p,j},ij}=u_{i,p}
\quad(1\leq p\leq a).
\end{equation}
All other entries in $h_{ij}$ are zero.
\end{definition}
For an illustration, the matrix below describes the case $(a,b,m,n)=(2,2,3,3)$.
\[
\begin{array}{c|ccccccccc}
	&h_{11}&h_{12}&h_{13}&h_{21}&h_{22}&h_{23}
	&h_{31}&h_{32}&h_{33}\\ \hline
	U_{1,1}&v_{1,1}&v_{2,1}&v_{3,1}&0&0&0&0&0&0\\
	U_{1,2}&v_{1,2}&v_{2,2}&v_{3,2}&0&0&0&0&0&0\\
	U_{2,1}&0&0&0&v_{1,1}&v_{2,1}&v_{3,1}&0&0&0\\
	U_{2,2}&0&0&0&v_{1,2}&v_{2,2}&v_{3,2}&0&0&0\\
	U_{3,1}&0&0&0&0&0&0&v_{1,1}&v_{2,1}&v_{3,1}\\
	U_{3,2}&0&0&0&0&0&0&v_{1,2}&v_{2,2}&v_{3,2}\\
	V_{1,1}&u_{1,1}&0&0&u_{2,1}&0&0&u_{3,1}&0&0\\
	V_{2,1}&u_{1,2}&0&0&u_{2,2}&0&0&u_{3,2}&0&0\\
	V_{1,2}&0&u_{1,1}&0&0&u_{2,1}&0&0&u_{3,1}&0\\
	V_{2,2}&0&u_{1,2}&0&0&u_{2,2}&0&0&u_{3,2}&0\\
	V_{1,3}&0&0&u_{1,1}&0&0&u_{2,1}&0&0&u_{3,1}\\
	V_{2,3}&0&0&u_{1,2}&0&0&u_{2,2}&0&0&u_{3,2}
\end{array}
\]
The convention in which the entries in \eqref{eq:standard-column-right}
are $-u_{i,p}$ is also used.  Multiplying every row $V_{p,j}$ by $-1$
transforms one convention into the other, so the two conventions
represent the same matroid.
\begin{definition}[The generic $(a,b)$-birigidity matroid]\label{def:generic-matroid}
	Suppose $p=0$ or $p$ is prime. Let
	\[
	\{X_{i,r}:i\in L,\ r\in[a]\}
	\cup
	\{Y_{j,s}:j\in R,\ s\in[b]\}
	\]
	be jointly algebraically independent indeterminates over $k_p$. Put
	\[
	\mathbf X_i=(X_{i,1},\ldots,X_{i,a})\quad(i\in L),
	\qquad
	\mathbf Y_j=(Y_{j,1},\ldots,Y_{j,b})\quad(j\in R),
	\]
	and write
	\[
	\mathbf X=(\mathbf X_i)_{i\in L},
	\qquad
	\mathbf Y=(\mathbf Y_j)_{j\in R}.
	\]
	The \emph{generic $(a,b)$-birigidity matroid in characteristic $p$},
	denoted by $\BR_{a,b}^{(p)}(m,n)$, is the column matroid of
	$\mathcal R_{a,b}(\mathbf X,\mathbf Y)$ over the rational function field
	\[
	k_p\bigl(X_{i,r},Y_{j,s}:i\in L,\ r\in[a],\
	j\in R,\ s\in[b]\bigr).
	\]
	When $p=0$, write
	\[
	\BR_{a,b}(m,n)=\BR_{a,b}^{(0)}(m,n).
	\]
\end{definition}

Suppose $\F$ is a field of characteristic $p$, and take the
indeterminates $X_{i,r}$ and $Y_{j,s}$ in
Definition~\ref{def:generic-matroid} to be jointly algebraically
independent over $\F$. By Lemma~\ref{lem:scalar-extension}, the matrix
$\mathcal R_{a,b}(\mathbf X,\mathbf Y)$ also represents
$\BR_{a,b}^{(p)}(m,n)$ over
\[
\F\bigl(X_{i,r},Y_{j,s}:i\in L,\ r\in[a],\
j\in R,\ s\in[b]\bigr).
\]
\noindent
We recall some well-known facts about birigidity matroids.
\begin{proposition}\label{prop:generic-is-abstract}
	\leavevmode
	\begin{enumerate}
		\item[(i)] $\BR_{a,b}(m,n)$ is an abstract $(a,b)$-birigidity matroid.
		\item[(ii)] Let $M$ be a matroid admitting a standard $(a,b)$-birigidity matrix representation $\mathcal R_{a,b}(u,v)$ over a field $\F$ of characteristic $p$. Then 
		$$M\preceq \BR_{a,b}^{(p)}(m,n).
		$$
		\item[(iii)] For every prime $p$,
		\begin{equation*}
			\BR_{a,b}^{(p)}(m,n)
			\preceq
			\BR_{a,b}(m,n).
		\end{equation*}
	\end{enumerate}
\end{proposition}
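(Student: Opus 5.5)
For (ii) and (iii) the plan is a specialization argument. Independence of a set of columns of $\mathcal R_{a,b}(\mathbf X,\mathbf Y)$ means some maximal minor on those columns is a nonzero polynomial in $k_p[\mathbf X,\mathbf Y]$.

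For (ii), let $S$ be independent in the column matroid of $\mathcal R_{a,b}(u,v)$ over $\F$, and choose $|S|$ rows whose minor on $S$ is nonzero over $\F$. Substituting $u,v$ into $\mathbf X,\mathbf Y$ is a ring homomorphism $k_p[\mathbf X,\mathbf Y]\to\F$, since $\F$ has characteristic $p$ and so contains $k_p$. Under this homomorphism the corresponding generic minor maps to the nonzero specialized minor, so the generic minor is a nonzero polynomial. It is therefore nonzero over $\F(\mathbf X,\mathbf Y)$. By Lemma~\ref{lem:scalar-extension} and the remark after Definition~\ref{def:generic-matroid}, $S$ is independent in $\BR^{(p)}_{a,b}(m,n)$.

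For (iii) I would use the same argument one level up. The generic minors have integer coefficients. A minor that is nonzero mod $p$ is a polynomial over $\mathbb Z$ whose reduction mod $p$ is nonzero, so it is nonzero over $\mathbb Q$. Hence every set independent in characteristic $p$ is independent in characteristic $0$.

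For (i) there are two things to check.

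The circuit condition comes first. Take $A\times D$ with $|A|=a+1$ and $|D|=b+1$. Restricted to these columns, only the rows $U_{i,q}$ with $i\in A$ and $V_{p,j}$ with $j\in D$ are nonzero. The restricted matrix is the transpose of the differential of the map
\[
\F^{a(b+1)}\times\F^{b(a+1)}\to\F^{(a+1)(b+1)},\qquad (x,y)\mapsto(\langle x_i,v_j\rangle+\langle u_i,y_j\rangle)_{i\in A,\,j\in D}.
\]
I would show directly that its column rank is at most $(a+1)(b+1)-1$. Take nonzero $\lambda\in\F^{A}$ with $\sum_i\lambda_iu_i=0$, possible since there are $a+1$ vectors in $\F^a$, and similarly nonzero $\mu\in\F^{D}$ with $\sum_j\mu_jv_j=0$. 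Then the weights $\lambda_i\mu_j$ annihilate every row: for row $U_{i,q}$ the sum is $\lambda_i\sum_j\mu_jv_{j,q}=0$, and symmetrically for the rows $V_{p,j}$. So $A\times D$ is dependent. Every proper subset must be independent, and it suffices to check the sets obtained by deleting one edge. By symmetry, and since all columns enter the relation with generically nonzero coefficients, each such set is independent as soon as the kernel is one-dimensional. So it suffices to show that $A\times D$ has rank exactly $(a+1)(b+1)-1$ generically. By (ii) it is enough to exhibit one specialization attaining this rank.

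The rank $r_{a,b}(m,n)$ is handled the same way. The upper bound comes from the same kernel construction: any $\lambda\in\ker(u^\top)$ of dimension $m-a$ and $\mu\in\ker(v^\top)$ of dimension $n-b$ give left null vectors $\lambda\otimes\mu$. These span an $(m-a)(n-b)$-dimensional space of dependencies among the columns. So the rank is at most $mn-(m-a)(n-b)$, which equals the count of rows minus overlap. The lower bound again needs one good specialization.

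The main obstacle is this lower bound on rank, both for the whole matrix and for $K_{a+1,b+1}$. I would first try a dimension count identifying the column dependencies exactly. A vector $w\in\F^{mn}$, viewed as an $m\times n$ matrix $W$, is a dependency iff $W^\top U=0$ and $WV=0$, where $U$ ($m\times a$) and $V$ ($n\times b$) are the coordinate matrices. For generic full-rank $U,V$, this says $W$ has column space in $\ker U^\top$ and row space in $\ker V^\top$. The solution space is therefore exactly $\ker U^\top\otimes\ker V^\top$, of dimension $(m-a)(n-b)$. This gives rank exactly $mn-(m-a)(n-b)$ whenever $U,V$ have full column rank. The $K_{a+1,b+1}$ case is the instance $m=a+1$, $n=b+1$, with a one-dimensional dependency space whose single vector $\lambda\otimes\mu$ has all entries nonzero when $u,v$ are in general position. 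That yields the circuit property.
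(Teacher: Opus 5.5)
The paper gives no proof of this proposition; it recalls it as a list of well-known facts, so there is no argument of the paper to match. Your proposal is correct and gives a complete proof.

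Parts (ii) and (iii) are the standard specialization-of-minors argument, the same one the paper writes out later for hyperconnectivity in Lemma~\ref{lem:generic-specialization}. They work as you state them. The minors of $\mathcal R_{a,b}(\mathbf X,\mathbf Y)$ are integer polynomials, and their reductions mod $p$ are the characteristic-$p$ minors. Evaluation at $(u,v)$ is a ring map $k_p[\mathbf X,\mathbf Y]\to\F$.

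For (i), your final paragraph does the real work. You identify the column dependencies of $\mathcal R_{a,b}(u,v)$ with the matrices $W$ satisfying $U^{\top}W=0$ and $WV=0$, i.e.\ with $\ker U^{\top}\otimes\ker V^{\top}$. This has two consequences:
\begin{enumerate}
\item[(a)] the rank is exactly $mn-(m-a)(n-b)$ whenever $U,V$ have full column rank;
\item[(b)] for $m=a+1$, $n=b+1$, the dependency space is one-dimensional and spanned by $\lambda\otimes\mu$.
\end{enumerate}
This is in effect the linear-algebraic content of the Brakensiek--Dhar--Gao--Gopi--Larson duality that the paper invokes elsewhere. If the row spaces of $P$ and $Q$ are $\ker U^{\top}$ and $\ker V^{\top}$, then $P$ and $Q$ represent $U_{m-a,m}$ and $U_{n-b,n}$. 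Your dependency space is then the row space of $P\otimes Q$. So you are reproving that $\BR_{a,b}(m,n)$ is dual to the generic tensor product, and (i) could alternatively be read off from the observation of Cruickshank, Jackson, Jord\'an and Tanigawa.

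Your route also does more than the paper's own tools would. The computation in Lemma~\ref{lem:reverse-conversion} exhibits a specialization in which $\mathcal S$ is independent. That yields the rank lower bound, but not the $K_{a+1,b+1}$-circuit condition, which your kernel description handles at the same time.

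Three small points to tidy up:
\begin{enumerate}
\item[(1)] The full support of $\lambda\otimes\mu$ needs a one-line justification. Any $a$ of the vectors $\mathbf X_i$, $i\in A$, are linearly independent, since they have a non-zero $a\times a$ minor. So a vanishing coordinate of $\lambda$ would force $\lambda=0$. The same applies to $\mu$.
\item[(2)] The detour through ``one good specialization'' via (ii) is unnecessary. The kernel computation works directly over the rational function field, where $U$ and $V$ visibly have full column rank.
\item[(3)] The vectors $\lambda\otimes\mu$ are right null vectors of $\mathcal R_{a,b}(u,v)$, i.e.\ column dependencies. They are left null vectors only of the transposed matrix (your differential).
\end{enumerate}
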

Thus, the generic birigidity matroids arise, as their name suggests, by choosing the parameters of the standard birigidity matrix generically. We note that non-generic parameter choices may result in a different matroid, which need not be an abstract $(a,b)$-birigidity matroid.
\subsection{The normalized star-basis representation}

Put
\[
L_0=[a],\qquad R_0=[b],
\]
and
\[
L_1=L\setminus L_0,\qquad R_1=R\setminus R_0.
\]
Define the star edge set
\begin{equation*}
\mathcal S=(L_0\times R)\cup(L\times R_0).
\end{equation*}
Its cardinality is
\begin{align*}
|\mathcal S|
&=an+mb-ab=r_{a,b}(m,n).
\end{align*}
\begin{lemma}\label{lem:star-is-basis}
Let $M$ be an abstract $(a,b)$-birigidity matroid on $L\times R$.
Then $\mathcal S$ is a basis of $M$.
\end{lemma}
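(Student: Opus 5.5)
Since $|\mathcal S|=r_{a,b}(m,n)=\rank M$, it suffices to show that $\mathcal S$ spans $M$, i.e.\ that every edge $ij$ with $i\in L_1$ and $j\in R_1$ lies in the closure $\operatorname{cl}_M(\mathcal S)$. Equivalently, one may show that $\mathcal S$ is independent; the spanning route is more convenient, because the only structural information available is that the sets $A\times D$ with $|A|=a+1$, $|D|=b+1$ are circuits.

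The key step uses these circuits directly. Fix $i\in L_1$ and $j\in R_1$, and put $A=L_0\cup\{i\}$ and $D=R_0\cup\{j\}$. Then $|A|=a+1$ and $|D|=b+1$, so $A\times D$ is an $M$-circuit. Every edge of $A\times D$ other than $ij$ has either its left endpoint in $L_0$ or its right endpoint in $R_0$, so it lies in $\mathcal S$. Since a circuit element lies in the closure of the remaining elements of the circuit, we get $ij\in\operatorname{cl}_M\bigl((A\times D)\setminus\{ij\}\bigr)\subseteq\operatorname{cl}_M(\mathcal S)$. Hence $\operatorname{cl}_M(\mathcal S)=L\times R$ and $r_M(\mathcal S)=\rank M=r_{a,b}(m,n)$.

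Finally, because $|\mathcal S|=r_{a,b}(m,n)=r_M(\mathcal S)$, the set $\mathcal S$ is independent, and it is spanning, so it is a basis of $M$. I expect no genuine obstacle. The only point to verify is the cardinality count $|\mathcal S|=an+bm-ab$, which is inclusion–exclusion over the overlap $L_0\times R_0$ and has already been computed above. The hypotheses $m\ge a+1$ and $n\ge b+1$ guarantee that $L_1$ and $R_1$ are nonempty; if either were empty, $\mathcal S$ would be the whole ground set and the statement would hold trivially anyway.
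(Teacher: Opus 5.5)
Your proposal is correct and follows essentially the same route as the paper: use the circuits $(L_0\cup\{i\})\times(R_0\cup\{j\})$ to place each $ij$ with $i\in L_1$, $j\in R_1$ in $\operatorname{cl}_M(\mathcal S)$, so $\mathcal S$ spans $M$. The equality $|\mathcal S|=r_{a,b}(m,n)=\rank M$ then makes the spanning set $\mathcal S$ a basis.
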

\begin{proof}
Let $i\in L_1$ and $j\in R_1$.  The edge set
$
(L_0\cup\{i\})\times(R_0\cup\{j\})
$
is an $M$-circuit.  Every edge in it except
$ij$ belongs to $\mathcal S$.  Therefore
$ij\in\operatorname{cl}_M(\mathcal S)$.  This holds for every
$i\in L_1$ and every $j\in R_1$, so $\mathcal S$ spans $M$.  Since
$|\mathcal S|=\rank M$, it is a basis.
\end{proof}
Let $M$ now be representable over a field $\F$. Since $\mathcal S$ is a basis, $M$ admits an $\F$-representation as the column matroid of an $r_{a,b}(m,n)\times mn$-matrix, whose rows and columns are indexed by $\mathcal S$ and $L\times R$, respectively, and the columns indexed by $\mathcal{S}$ form an identity matrix.  We call this a \emph{star-basis representation} with respect to $\mathcal S$. Note that in general a star-basis representation is not unique. Indeed, any representation of $M$ as a column matroid of a $r_{a,b}(m,n)\times mn$ matrix $\mathcal{A}$ can be converted to star-basis by multiplying $\mathcal A$ on the left with the inverse of the $\mathcal S \times \mathcal S$-indexed square submatrix. 

However, in any star-basis representation, certain relations between coefficients must always be satisfied. By definition, its columns indexed by $\mathcal{S}$ form an identity matrix. As for the remaining coefficients, we show that they can be parametrized by a small set of parameters. To this end, we need one technical lemma.

For each $s\in\mathcal S$, denote by
$\mathbf e_s\in\F^{\mathcal S}$ the vector whose entry in coordinate $s$ is $1$, and $0$ otherwise. For any $(i,j)\in L_1\times R_1$, since $(L_0\cup\{i\})\times(R_0\cup\{j\})$ is the
fundamental circuit of $ij$ with respect to $\mathcal S$, there are
non-zero coefficients
\[
\gamma_{pq}^{ij},\quad \alpha_p^{ij},\quad \beta_q^{ij}\in\F^{\times} \ \ (p\in L_0,\ q\in R_0)
\]
such that the column $C_{ij}$ indexed by $ij$ satisfies
\begin{equation}\label{eq:initial-column}
	C_{ij}
	=
	\sum_{p\in L_0}\sum_{q\in R_0}\gamma_{pq}^{ij}\mathbf e_{pq}
	+
	\sum_{p\in L_0}\alpha_p^{ij}\mathbf e_{pj}
	+
	\sum_{q\in R_0}\beta_q^{ij}\mathbf e_{iq}.
\end{equation}
\begin{lemma}\label{lem:proportionality}
Let $M$ be an $\F$-representable abstract $(a,b)$-birigidity matroid on
$L\times R$, in star-basis representation with respect to $\mathcal S$,
so that~\eqref{eq:initial-column} is satisfied. Then for all $p\in L_0,q\in R_0, j\in R_1$ and distinct $i,k\in L_1$ we have
\begin{equation}\label{eq:left-proportionality}
	\frac{\gamma_{pq}^{ij}}{\alpha_p^{ij}}
	=
	\frac{\gamma_{pq}^{kj}}{\alpha_p^{kj}}.
\end{equation}
And, symmetrically, for all $p\in L_0,q\in R_0, i\in L_1$ and distinct $j,\ell \in R_1$ we have 	
\begin{equation}\label{eq:right-proportionality}
	\frac{\gamma_{pq}^{ij}}{\beta_q^{ij}}
	=
	\frac{\gamma_{pq}^{i\ell}}{\beta_q^{i\ell}}.
\end{equation}
\end{lemma}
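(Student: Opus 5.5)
The idea is to find, for each fixed $p\in L_0$, an $M$-circuit that contains both non-star edges $ij$ and $kj$ but avoids every star edge $pq$ ($q\in R_0$) and $pj$. The linear dependency supported on that circuit then forces the coordinates $\mathbf e_{pq}$ and $\mathbf e_{pj}$ of a combination $\lambda C_{ij}+\mu C_{kj}$ to vanish. This gives two expressions for the same ratio $\lambda/\mu$, and equating them is exactly \eqref{eq:left-proportionality}.

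\emph{Choice of circuit.} Fix $p\in L_0$, $q\in R_0$, $j\in R_1$ and distinct $i,k\in L_1$. Put
\[
A=(L_0\setminus\{p\})\cup\{i,k\},\qquad D=R_0\cup\{j\}.
\]
Then $|A|=a+1$ and $|D|=b+1$, so by Definition~\ref{def:abstract-birigidity} the set $A\times D$ is an $M$-circuit. Its only edges outside $\mathcal S$ are $ij$ and $kj$. All other edges of $A\times D$ have either a left endpoint in $L_0$ or a right endpoint in $R_0$, so they lie in $\mathcal S$. Moreover, no edge with left endpoint $p$ lies in $A\times D$.

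\emph{The dependency.} Since $A\times D$ is a circuit, there is a linear dependency among its columns in which every coefficient is nonzero. In particular it reads
\[
\lambda C_{ij}+\mu C_{kj}+\sum_{s\in \mathcal S\cap(A\times D)}c_s\mathbf e_s=0
\qquad\text{with }\lambda,\mu\in\F^{\times}.
\]
The star columns are unit vectors, so every coordinate $s\in\mathcal S\setminus(A\times D)$ of $\lambda C_{ij}+\mu C_{kj}$ must vanish. Both $pq$ and $pj$ are such coordinates. Reading them off from \eqref{eq:initial-column} gives
\[
\lambda\gamma_{pq}^{ij}+\mu\gamma_{pq}^{kj}=0,\qquad \lambda\alpha_p^{ij}+\mu\alpha_p^{kj}=0.
\]
All of these coefficients are nonzero, so
\[
-\frac{\mu}{\lambda}=\frac{\gamma_{pq}^{ij}}{\gamma_{pq}^{kj}}=\frac{\alpha_p^{ij}}{\alpha_p^{kj}}.
\]
Rearranging this yields \eqref{eq:left-proportionality}.

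\emph{The right-hand identity.} The proof of \eqref{eq:right-proportionality} is symmetric. Use the circuit $(L_0\cup\{i\})\times\bigl((R_0\setminus\{q\})\cup\{j,\ell\}\bigr)$. Its only non-star edges are $ij$ and $i\ell$, and it avoids the coordinates $pq$ and $iq$. Comparing the $\mathbf e_{pq}$ and $\mathbf e_{iq}$ coordinates of the dependency gives the ratio equality with the $\beta$'s in place of the $\alpha$'s.

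\emph{Main obstacle.} The only delicate step is choosing the circuit. It must contain exactly two non-star columns, so that the coordinate argument applies. It must also omit the specific coordinates $pq$ and $pj$ (respectively $pq$ and $iq$) whose coefficients we want to compare. Deleting $p$ from $L_0$ and adding both $i$ and $k$ achieves both requirements while keeping the size $a+1$ needed by Definition~\ref{def:abstract-birigidity}. The rest is bookkeeping using \eqref{eq:initial-column} and the nonvanishing of all circuit coefficients.
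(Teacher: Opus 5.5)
Your proof is correct and follows the paper's argument essentially verbatim: you use the same circuit $\bigl((L_0\setminus\{p\})\cup\{i,k\}\bigr)\times(R_0\cup\{j\})$ (and its mirror for the $\beta$ identity), and read off the $\mathbf e_{pj}$ and $\mathbf e_{pq}$ coordinates of the dependency to get two linear relations in the non-zero $\lambda,\mu$. Your justification that the star columns contribute nothing to these coordinates, because no edge at $p$ lies in the circuit, is exactly the point the paper uses.
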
	
\begin{proof}
Let $p\in L_0$, let
$i,k\in L_1$ be distinct, and let $j\in R_1$.  Put
\begin{equation}\label{eq:left-two-one}
	\mathcal Q
	=
	\bigl((L_0\setminus\{p\})\cup\{i,k\}\bigr)
	\times(R_0\cup\{j\}).
\end{equation}
The only edges of $\mathcal Q$ outside $\mathcal S$ are $ij$ and $kj$.
Since $\mathcal Q$ is a circuit, there are non-zero scalars
\[
\lambda,\mu\in\F
\quad\text{and}\quad
\delta_s\in\F\quad(s\in\mathcal Q\cap\mathcal S),
\]
such that
\begin{equation}\label{eq:left-two-one-relation}
	\lambda C_{ij}+\mu C_{kj}
	+
	\sum_{s\in\mathcal Q\cap\mathcal S}\delta_s\mathbf e_s=0.
\end{equation}
The left vertex $p$ does not belong to the left vertex set of
$\mathcal Q$.  Consequently, neither $pj$ nor $pq$, for
$q\in R_0$, belongs to $\mathcal Q\cap\mathcal S$.  Fix
$q\in R_0$.  In the relation \eqref{eq:left-two-one-relation}, the
coefficient of the basis vector $\mathbf e_{pj}$ is therefore
\begin{equation*}
	\lambda\alpha_p^{ij}+\mu\alpha_p^{kj}=0,
\end{equation*}
and the coefficient of the basis vector $\mathbf e_{pq}$ is
\begin{equation*}
	\lambda\gamma_{pq}^{ij}+\mu\gamma_{pq}^{kj}=0.
\end{equation*}
Since $\lambda$ and $\mu$ are non-zero, eliminating them gives~\eqref{eq:left-proportionality}. This holds for every choice of $p,i,k,j,q$.

Similarly, let $q\in R_0$, let $i\in L_1$, and let
$j,\ell\in R_1$ be distinct. Evaluating the circuit
\begin{equation}\label{eq:right-two-one}
	(L_0\cup\{i\})
	\times\bigl((R_0\setminus\{q\})\cup\{j,\ell\}\bigr)
\end{equation}
yields~\eqref{eq:right-proportionality}.
\end{proof}	
\begin{proposition}[Normalized star-basis form]
	\label{prop:normalized-star-form}
	Let $M$ be an $\F$-representable abstract $(a,b)$-birigidity matroid on
	$L\times R$. Then, there are non-zero parameters
	\[
	x_{i,p}\in\F^{\times}
	\quad(i\in L_1,\ p\in L_0)
	\]
	and
	\[
	y_{j,q}\in\F^{\times}
	\quad(j\in R_1,\ q\in R_0)
	\]
	satisfying
	\begin{equation}\label{eq:first-coordinates-one}
		x_{i,1}=1\quad(i\in L_1),
		\qquad
		y_{j,1}=1\quad(j\in R_1),
	\end{equation}
	such that $M$ has a star-basis representation with respect to
	$\mathcal S$ whose column indexed by $s\in\mathcal S$ is
	$\mathbf e_s$, and whose column $C_{ij}$ indexed by $ij$, for $i\in L_1$ and
	$j\in R_1$, is
	\begin{equation}\label{eq:normalized-star-column}
		C_{ij}
		=
		\sum_{p\in L_0}\sum_{q\in R_0}x_{i,p}y_{j,q}\mathbf e_{pq}
		+
		\sum_{p\in L_0}x_{i,p}\mathbf e_{pj}
		+
		\sum_{q\in R_0}y_{j,q}\mathbf e_{iq}.
	\end{equation}
\end{proposition}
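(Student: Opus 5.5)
Start from an arbitrary star-basis representation, with columns as in \eqref{eq:initial-column}, and use the freedom of rescaling rows and columns. Rescaling a column does not change the matroid. Rescaling the row indexed by $s\in\mathcal S$ changes the column $\mathbf e_s$ into a multiple of itself, which is then rescaled back. So we may replace every row coordinate $\mathbf e_s$ by $c_s\mathbf e_s$ and every column $C_{ij}$ by $d_{ij}C_{ij}$ with nonzero scalars, and the result is still a star-basis representation of $M$. The plan is to use Lemma~\ref{lem:proportionality} to show that the coefficients factor, and then absorb the remaining freedom by such rescalings.

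\textbf{Step 1: factorization.} By \eqref{eq:left-proportionality}, for fixed $j\in R_1$, $p\in L_0$, $q\in R_0$, the ratio $\gamma^{ij}_{pq}/\alpha^{ij}_p$ is independent of $i\in L_1$. Call it $g_{p,q}(j)$. By \eqref{eq:right-proportionality}, $\gamma^{ij}_{pq}/\beta^{ij}_q$ is independent of $j$; call it $h_{p,q}(i)$. Then
\[
\gamma^{ij}_{pq}=\alpha^{ij}_p\,g_{p,q}(j)=\beta^{ij}_q\,h_{p,q}(i).
\]
Taking ratios for two values $q,q'\in R_0$ gives
\[
\frac{g_{p,q}(j)}{g_{p,q'}(j)}=\frac{\beta^{ij}_q\,h_{p,q}(i)}{\beta^{ij}_{q'}\,h_{p,q'}(i)}.
\]

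\textbf{Step 2: normalize columns and rows.} First rescale each column $C_{ij}$ so that $\gamma^{ij}_{11}=1$. Next rescale the rows $\mathbf e_{pj}$ ($p\in L_0$, $j\in R_1$) so that $\alpha^{ij}_p$ is as simple as possible. The key point is that $\alpha^{ij}_p/\gamma^{ij}_{pq}=1/g_{p,q}(j)$ depends only on $j$, so row $pj$ can be scaled by $g_{p,1}(j)$. Likewise, $\beta^{ij}_q/\gamma^{ij}_{pq}$ depends only on $i$, so row $iq$ can be scaled by $h_{1,q}(i)$. After these scalings, $\alpha^{ij}_p=\gamma^{ij}_{p1}$ and $\beta^{ij}_q=\gamma^{ij}_{1q}$.

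\textbf{Step 3: rank-one structure of $\gamma$.} Now $\gamma^{ij}_{pq}/\gamma^{ij}_{p1}$ is independent of $i$, and $\gamma^{ij}_{pq}/\gamma^{ij}_{1q}$ is independent of $j$. From these we aim to deduce $\gamma^{ij}_{pq}=x_{i,p}y_{j,q}\,c_{pq}$ with $x_{i,1}=y_{j,1}=1$. Define $x_{i,p}=\gamma^{ij}_{p1}$ and $y_{j,q}=\gamma^{ij}_{1q}$. We must check that $x_{i,p}$ is independent of $j$; this follows from the second relation applied with $q=1$, together with $\gamma^{ij}_{11}=1$. Similarly $y_{j,q}$ is independent of $i$. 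Then
\[
\gamma^{ij}_{pq}=\gamma^{ij}_{p1}\cdot\frac{\gamma^{ij}_{pq}}{\gamma^{ij}_{p1}},
\]
where the second factor depends only on $(j,p,q)$. Evaluating at a fixed $i_0$ and using symmetry reduces this to the form $x_{i,p}y_{j,q}c_{pq}$, where $c_{pq}=\gamma^{i_0j_0}_{pq}/(x_{i_0,p}y_{j_0,q})$ is a constant. Finally rescale row $\mathbf e_{pq}$ by $c_{pq}^{-1}$ to reach \eqref{eq:normalized-star-column}.

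\textbf{Main obstacle.} The main difficulty is the bookkeeping in Step 3. Each normalization uses up only the freedom it is entitled to: column scalings (one per $ij$), and row scalings on the sets $\{pj\}$, $\{iq\}$, $\{pq\}$. The danger is a later scaling spoiling an earlier one. I would verify this by writing the general rescaled coefficients explicitly as $\gamma\mapsto d_{ij}c_{pq}\gamma$, $\alpha\mapsto d_{ij}c_{pj}\alpha$, $\beta\mapsto d_{ij}c_{iq}\beta$, and checking directly that the choices above satisfy every target equation at once.
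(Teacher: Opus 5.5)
Your proposal is correct and is essentially the paper's proof. The paper likewise scales columns so that $\gamma^{ij}_{11}=1$, rescales rows $pj$ and $iq$ so that $\alpha^{ij}_p=x_{i,p}=\gamma^{ij}_{p1}$ and $\beta^{ij}_q=y_{j,q}=\gamma^{ij}_{1q}$ (it treats the rows with $p=1$ and $q=1$ first, which your single scalings by $g_{p,1}(j)$ and $h_{1,q}(i)$ subsume), obtains $\gamma^{ij}_{pq}=\kappa_{pq}x_{i,p}y_{j,q}$ through a base point $(i_0,j_0)$, and finishes by rescaling the rows $pq$; the one detail worth stating explicitly is that $\kappa_{p1}=\kappa_{1q}=1$, so this last scaling leaves the entries defining $x$ and $y$ unchanged.
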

For an illustration, a normalized star-basis matrix for an abstract birigidity matroid, with $(a,b,m,n)=(2,2,5,5)$, omitting the identity matrix block in the basis columns. To simplify notation, for $i=3,4,5$, $x_i$ and $y_i$ stand for $x_{i,2}$ and $y_{i,2}$ (and note that $x_{i,1}=y_{i,1}=1$).
\[
\begin{array}{c|ccccccccc}
	&C_{33}&C_{34}&C_{35}&C_{43}&C_{44}&C_{45}
	&C_{53}&C_{54}&C_{55}\\ \hline
	11&1&1&1&1&1&1&1&1&1\\
	12&y_3&y_4&y_5&y_3&y_4&y_5&y_3&y_4&y_5\\
	21&x_3&x_3&x_3&x_4&x_4&x_4&x_5&x_5&x_5\\
	22&x_3y_3&x_3y_4&x_3y_5&x_4y_3&x_4y_4&x_4y_5
	&x_5y_3&x_5y_4&x_5y_5\\
	13&1&0&0&1&0&0&1&0&0\\
	14&0&1&0&0&1&0&0&1&0\\
	15&0&0&1&0&0&1&0&0&1\\
	23&x_3&0&0&x_4&0&0&x_5&0&0\\
	24&0&x_3&0&0&x_4&0&0&x_5&0\\
	25&0&0&x_3&0&0&x_4&0&0&x_5\\
	31&1&1&1&0&0&0&0&0&0\\
	32&y_3&y_4&y_5&0&0&0&0&0&0\\
	41&0&0&0&1&1&1&0&0&0\\
	42&0&0&0&y_3&y_4&y_5&0&0&0\\
	51&0&0&0&0&0&0&1&1&1\\
	52&0&0&0&0&0&0&y_3&y_4&y_5
\end{array}
\]
\begin{proof}
If $L_1$ or $R_1$ has one element, an assertion that a quantity is
independent of the corresponding index is understood automatically.

In~\eqref{eq:initial-column}, let us scale\footnote{Since Lemma~\ref{lem:proportionality} holds for any star-basis representation, we may, in a slight abuse of notation, assume that after each row or column scaling~\eqref{eq:left-proportionality} and~\eqref{eq:right-proportionality} still hold with the `new' parameter values.} each non-basis column $ij$ (since $\gamma_{11}^{ij}\neq 0$) so that
\begin{equation}\label{eq:gamma11-one}
\gamma_{11}^{ij}=1
\qquad(i\in L_1,\ j\in R_1).
\end{equation}
Whenever a row is scaled below, scale the corresponding basis column
by the inverse scalar, so that every basis column remains the
corresponding column $\mathbf e_s$.

Consider \eqref{eq:left-proportionality} with $p=q=1$.  In view of
\eqref{eq:gamma11-one}, the coefficient $\alpha_1^{ij}$ does not depend on $i$.  Hence, we may scale row $1j$ so that
\begin{equation}\label{eq:alpha1-one}
\alpha_1^{ij}=1
\qquad(i\in L_1,\ j\in R_1).
\end{equation}
Similarly, consider \eqref{eq:right-proportionality} with $p=q=1$.  The coefficient
$\beta_1^{ij}$ does not depend on $j$. Hence, we may scale row $i1$ so that
\begin{equation}\label{eq:beta1-one}
\beta_1^{ij}=1
\qquad(i\in L_1,\ j\in R_1).
\end{equation}
For each $i\in L_1$ and $p\in L_0$, define
$$
x_{i,p}=\gamma_{p1}^{ij},
$$
where $j\in R_1$ is arbitrary.  This is well defined: by
\eqref{eq:right-proportionality} with $q=1$, together with
\eqref{eq:beta1-one}, $\gamma_{p1}^{ij}$ does not
depend on $j$.  Equation \eqref{eq:gamma11-one} gives $x_{i,1}=1$.
Similarly, for $j\in R_1$ and $q\in R_0$, define
$$
y_{j,q}=\gamma_{1q}^{ij},
$$
where $i\in L_1$ is arbitrary.  By
\eqref{eq:left-proportionality} with $p=1$, together with
\eqref{eq:alpha1-one}, this does not depend on $i$. We have
$y_{j,1}=1$.

For fixed $p\in L_0$ and $j\in R_1$, identity
\eqref{eq:left-proportionality} with $q=1$ gives that
\[
\frac{x_{i,p}}{\alpha_p^{ij}}
\]
is independent of $i\in L_1$. Scaling each row $pj$ by this non-zero scalar, we obtain
\begin{equation}\label{eq:alpha-equals-x}
\alpha_p^{ij}=x_{i,p}
\qquad(i\in L_1,\ j\in R_1,\ p\in L_0).
\end{equation}
Similarly, for fixed $i\in L_1$ and $q\in R_0$, equation
\eqref{eq:right-proportionality} with $p=1$ shows that row $iq$ can be
scaled so that
\begin{equation}\label{eq:beta-equals-y}
\beta_q^{ij}=y_{j,q}
\qquad(i\in L_1,\ j\in R_1,\ q\in R_0).
\end{equation}
Now, fix $i_0\in L_1$ and $j_0\in R_1$.  By
\eqref{eq:left-proportionality} and \eqref{eq:alpha-equals-x}, for
$i\in L_1$, $j\in R_1$, $p\in L_0$, and $q\in R_0$,
\begin{equation*}
\frac{\gamma_{pq}^{ij}}{x_{i,p}}
=
\frac{\gamma_{pq}^{i_0j}}{x_{i_0,p}}.
\end{equation*}
By \eqref{eq:right-proportionality} and \eqref{eq:beta-equals-y},
\begin{equation*}
\frac{\gamma_{pq}^{i_0j}}{y_{j,q}}
=
\frac{\gamma_{pq}^{i_0j_0}}{y_{j_0,q}}.
\end{equation*}
Consequently,
\begin{equation}\label{eq:gamma-kappa}
\gamma_{pq}^{ij}=\kappa_{pq}x_{i,p}y_{j,q},
\end{equation}
where
\begin{equation*}
\kappa_{pq}
=
\frac{\gamma_{pq}^{i_0j_0}}
{x_{i_0,p}y_{j_0,q}}
\in\F^{\times}.
\end{equation*}
The definitions of $x_{i,p}$ and $y_{j,q}$ imply
\[
\kappa_{p1}=1\quad(p\in L_0),
\qquad
\kappa_{1q}=1\quad(q\in R_0).
\]
Scale row $pq$ by $\kappa_{pq}^{-1}$ for every $p\in L_0$ and
$q\in R_0$.  Equations \eqref{eq:alpha-equals-x},
\eqref{eq:beta-equals-y}, and \eqref{eq:gamma-kappa} now give
\eqref{eq:normalized-star-column}.
\end{proof}
\begin{remark}\label{rem:minimal-hypotheses}
The above proof used the circuit condition only for the copies
\[
(L_0\cup\{i\})\times(R_0\cup\{j\})
\qquad(i\in L_1,\ j\in R_1)
\]
in order to deduce that all coefficients in
\eqref{eq:initial-column} are non-zero. For the copies in
\eqref{eq:left-two-one} and \eqref{eq:right-two-one}, on a closer inspection, dependence is sufficient. No other copies of $K_{a+1,b+1}$ were used in the proof. 
\end{remark}

\subsection{Conversion to the standard representation}
We next show that the normalized star-basis representation can be converted to an explicit instance of the standard birigidity matrix.

For every $p\in L_0$, let $\mathbf f_p\in\F^a$ be the vector whose
$p$th coordinate is $1$ and whose other coordinates are $0$.  For every
$q\in R_0$, let $\mathbf g_q\in\F^b$ be the vector whose $q$th
coordinate is $1$ and whose other coordinates are $0$.

\begin{lemma}[Reverse conversion]
\label{lem:reverse-conversion}
Suppose 	\[
x_{i,p}\in\F^{\times}
\quad(i\in L_1,\ p\in L_0)
\]
and
\[
y_{j,q}\in\F^{\times}
\quad(j\in R_1,\ q\in R_0)
\] satisfy
\eqref{eq:first-coordinates-one}. Suppose $M$ is a matroid on $L\times R$ admitting a star-basis representation, where each non-basis column is expressed by~\eqref{eq:normalized-star-column}.
 Define
\[
u_p=\mathbf f_p\in\F^a
\qquad(p\in L_0),
\]
\[
u_i=(-x_{i,1},\ldots,-x_{i,a})\in\F^a
\qquad(i\in L_1),
\]
\[
v_q=\mathbf g_q\in\F^b
\qquad(q\in R_0),
\]
and
\[
v_j=(-y_{j,1},\ldots,-y_{j,b})\in\F^b
\qquad(j\in R_1).
\]
Put
\[
u=(u_i)_{i\in L},\qquad v=(v_j)_{j\in R}.
\]
Then $M$ is the column matroid of $\mathcal R_{a,b}(u,v)$.
\end{lemma}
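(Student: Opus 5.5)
The plan is to compare the column matroid of $\mathcal R_{a,b}(u,v)$ directly with the given star-basis representation $\mathcal A$ of $M$. Write $h_{ij}$ for the columns of $\mathcal R_{a,b}(u,v)$ and $\mathcal A$ for the star-basis matrix. Let $\Phi\colon\F^{\mathcal S}\to\F^{bm+an}$ be the linear map with $\Phi(\mathbf e_s)=h_s$ for $s\in\mathcal S$. It suffices to prove two things:
\begin{enumerate}
\item[(a)] the columns $h_s$, $s\in\mathcal S$, are linearly independent, so $\Phi$ is injective;
\item[(b)] for every $i\in L_1$ and $j\in R_1$ we have $h_{ij}=-\Phi(C_{ij})$.
\end{enumerate}
Then every column of $\mathcal R_{a,b}(u,v)$ equals the injective image under $\Phi$ of the corresponding column of $\mathcal A$, up to a non-zero scalar ($+1$ on basis columns, $-1$ on the others). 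Injective linear maps and non-zero column scalings preserve linear dependence of any set of columns. Hence the two column matroids coincide.

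For (a), I would use a triangular elimination on rows. Suppose $\sum_{s\in\mathcal S}c_sh_s=0$.
\begin{itemize}
\item For $j\in R_1$, the rows $V_{\cdot,j}$ meet only the columns $h_{pj}$ with $p\in L_0$ among the star columns. There they carry the entries $u_p=\mathbf f_p$, which form an identity block, so $c_{pj}=0$.
\item Symmetrically, for $i\in L_1$ the rows $U_{i,\cdot}$ meet only the columns $h_{iq}$ with $q\in R_0$, carrying $v_q=\mathbf g_q$. This forces $c_{iq}=0$.
\item The remaining columns are $h_{pq}$ with $p\in L_0$, $q\in R_0$. The column $h_{pq}$ has entry $1$ in row $V_{p,q}$, and no other remaining column does, so $c_{pq}=0$.
\end{itemize}

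For (b), I would compute row by row, using the columns $h_{pq}$, $h_{pj}$ and $h_{iq}$ read off from Definition~\ref{def:standard-matrix}. The claimed identity is
\[
h_{ij}=-\sum_{p,q}x_{i,p}y_{j,q}h_{pq}-\sum_{p}x_{i,p}h_{pj}-\sum_q y_{j,q}h_{iq}.
\]
The four row groups that can be non-zero check as follows.
\begin{itemize}
\item On rows $U_{i,\cdot}$, only the last sum contributes, giving $-\sum_q y_{j,q}\mathbf g_q=v_j$.
\item On rows $V_{\cdot,j}$, only the middle sum contributes, giving $-\sum_p x_{i,p}\mathbf f_p=u_i$.
\item On rows $U_{p,\cdot}$, the middle term contributes $-x_{i,p}v_j=x_{i,p}(y_{j,1},\dots,y_{j,b})$. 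This cancels the first sum's contribution $-x_{i,p}\sum_q y_{j,q}\mathbf g_q$.
\item On rows $V_{\cdot,q}$, the contribution $-y_{j,q}u_i$ from $h_{iq}$ cancels $-y_{j,q}\sum_p x_{i,p}\mathbf f_p$.
\end{itemize}
All other rows vanish on both sides, which matches the support of $h_{ij}$.

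The only real obstacle is bookkeeping of signs: whether the coefficients come out uniformly negated rather than with mixed signs. The choice of $-x$ and $-y$ in the definition of $u_i$ and $v_j$ is exactly what makes the sign uniform in (b). If mixed signs appeared instead, I would absorb them by rescaling rows of $\mathcal A$ together with the corresponding basis columns, as in the proof of Proposition~\ref{prop:normalized-star-form}.
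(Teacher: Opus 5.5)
Your proposal is correct and follows the paper's proof essentially step for step. You prove independence of the star columns by the same triangular row elimination; in the last step you use row $V_{p,q}$ where the paper uses $U_{p,s}$, which works equally well. You then check $h_{ij}=-\Phi(C_{ij})$ row by row and conclude from the injectivity of $\Phi$ together with a $-1$ scaling of the non-basis columns. Your computation in (b) already shows the signs come out uniformly, so the fallback in your final paragraph is never needed.
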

\begin{proof}
Let
$
H=\mathcal R_{a,b}(u,v)
$,
and let \(h_{ij}\) denote the column of \(H\) indexed by \(ij\).
We first prove that the columns indexed by \(\mathcal S\) are
independent. To this end, suppose that
$$
\sum_{\substack{p\in L_0\\ j\in R_1}}
\lambda_{pj}h_{pj}
+
\sum_{\substack{i\in L_1\\ q\in R_0}}
\lambda_{iq}h_{iq}
+
\sum_{\substack{p\in L_0\\ q\in R_0}}
\lambda_{pq}h_{pq}
=0.
$$
Fix \(j\in R_1\) and \(r\in L_0\). Evaluating coefficients in row \(V_{r,j}\), the only columns that can be non-zero in this row are \(h_{pj}\), \(p\in L_0\). Moreover, the entry of \(h_{pj}\) in \(V_{r,j}\) is $1$ if $p=r$, and $0$ otherwise. Hence,
\(\lambda_{rj}=0\), and since $r$ was arbitrary, we obtain
$$
\lambda_{pj}=0
\qquad(p\in L_0,\ j\in R_1),
$$
and, symmetrically,
$$
\lambda_{iq}=0
\qquad(i\in L_1,\ q\in R_0).
$$
Only the coefficients \(\lambda_{pq}\), with \(p\in L_0\) and
\(q\in R_0\), remain. Fix \(p\in L_0\) and \(s\in R_0\). In row
\(U_{p,s}\), the entry of \(h_{pq}\) is $1$ if $q=s$ and $0$ otherwise.
Consequently, \(\lambda_{ps}=0\). It follows that all the coefficients are zero.
Therefore $\{h_s:s\in\mathcal S\}$ is an independent set.

We next determine the coordinates of each non-basis column relative to
these columns. Fix \(i\in L_1\) and \(j\in R_1\), and put
$$
D_{ij}
=
\sum_{p\in L_0}\sum_{q\in R_0}
x_{i,p}y_{j,q}h_{pq}
+
\sum_{p\in L_0}x_{i,p}h_{pj}
+
\sum_{q\in R_0}y_{j,q}h_{iq}.
$$
We claim that
\begin{equation}\label{eq:standard-star-relation}
	D_{ij}=-h_{ij}.
\end{equation}
We verify this in every row of \(H\). Consider a row \(U_{\ell,s}\), where \(\ell\in L\) and
\(s\in [b]\) (the case of rows \(V_{r,t}\), where \(r\in [a]\) and \(t\in R\) is symmetric).
	
If \(\ell=i\), only the third sum in \(D_{ij}\) contributes. Its entry
in this row is
$$
\sum_{q\in R_0}y_{j,q}(v_q)_s
=
\sum_{q\in R_0}y_{j,q}(\mathbf g_q)_s
=
y_{j,s}.
$$
On the other hand, the entry of \(-h_{ij}\) is
$$
-(v_j)_s=y_{j,s}.
$$
If \(\ell=p\in L_0\), the contribution of the first sum is
$$
\sum_{q\in R_0}x_{i,p}y_{j,q}(v_q)_s
=
x_{i,p}y_{j,s},
$$
whereas the contribution of the second sum is
$$
x_{i,p}(v_j)_s=-x_{i,p}y_{j,s}.
$$
These contributions cancel, and the third sum has zero entry in this
row. Thus the entry of \(D_{ij}\) is zero, as is the entry of
\(-h_{ij}\).

Finally, if \(\ell\in L_1\setminus\{i\}\), none of the columns
occurring in \(D_{ij}\) is non-zero in row \(U_{\ell,s}\). Both sides of
\eqref{eq:standard-star-relation} therefore have zero entry in this
row. This proves \eqref{eq:standard-star-relation}.

Thus the coordinates of $-h_{ij}$ with respect to the basis
$\{h_s:s\in\mathcal S\}$ are exactly the coordinates in
\eqref{eq:normalized-star-column}.  Multiplying each non-basis column
$h_{ij}$ by $-1$ does not change the column matroid.  The two matrices
therefore represent the same matroid.
\end{proof}
\begin{corollary}\label{cor:every-instance}
Let $M$ be an $\F$-representable abstract $(a,b)$-birigidity matroid on
$L\times R$.  Then there are vectors
\[
u_i\in\F^a\quad(i\in L),
\qquad
v_j\in\F^b\quad(j\in R)
\]
such that, writing
\[
u=(u_i)_{i\in L},\qquad v=(v_j)_{j\in R},
\]
the matroid $M$ is the column matroid of $\mathcal R_{a,b}(u,v)$.
\end{corollary}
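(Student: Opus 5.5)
The corollary follows by chaining the two preceding results. Since $M$ is an $\F$-representable abstract $(a,b)$-birigidity matroid, Proposition~\ref{prop:normalized-star-form} gives non-zero parameters $x_{i,p}$ ($i\in L_1$, $p\in L_0$) and $y_{j,q}$ ($j\in R_1$, $q\in R_0$) in $\F^\times$. These satisfy the normalization \eqref{eq:first-coordinates-one}, and $M$ has a star-basis representation with respect to $\mathcal S$. In this representation the basis columns are the unit vectors $\mathbf e_s$, and every non-basis column $C_{ij}$ has the form \eqref{eq:normalized-star-column}.

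These are exactly the hypotheses of Lemma~\ref{lem:reverse-conversion}. We define $u_p=\mathbf f_p$ for $p\in L_0$, $u_i=(-x_{i,1},\ldots,-x_{i,a})$ for $i\in L_1$, $v_q=\mathbf g_q$ for $q\in R_0$, and $v_j=(-y_{j,1},\ldots,-y_{j,b})$ for $j\in R_1$. All of these vectors have entries in $\F$, since the parameters do. The lemma then says that $M$ is the column matroid of $\mathcal R_{a,b}(u,v)$, which is the required conclusion with $u=(u_i)_{i\in L}$ and $v=(v_j)_{j\in R}$.

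No step is a genuine obstacle; the substantive work sits in the two cited results. The only points to confirm are that $M$ is really the same matroid under both descriptions, and that the degenerate cases $|L_1|=1$ or $|R_1|=1$ are covered.

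\begin{itemize}
\item The first point holds because the normalization in Proposition~\ref{prop:normalized-star-form} only rescales rows (together with the inverse scaling of basis columns) and non-basis columns. Such operations do not change the column matroid.
\item The second point is covered by the convention stated at the start of the proof of Proposition~\ref{prop:normalized-star-form}.
\end{itemize}
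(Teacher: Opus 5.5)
Your proposal is correct and matches the paper's proof exactly: Proposition~\ref{prop:normalized-star-form} supplies the normalized star-basis representation, and Lemma~\ref{lem:reverse-conversion}, with the same choice of $u$ and $v$, turns it into $\mathcal R_{a,b}(u,v)$. Your two side checks are harmless but unnecessary, since the proposition's statement already asserts that $M$ itself has this normalized representation.
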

\begin{proof}
Apply Proposition~\ref{prop:normalized-star-form}, and then apply
Lemma~\ref{lem:reverse-conversion}.
\end{proof}

\subsection{Putting it all together}
\begin{proof}[Proof of Theorem~\ref{thm:global-maximality}]
Let $M$ be an abstract $(a,b)$-birigidity matroid
on $L\times R$, representable over a field $\F$ of characteristic $p$. By Corollary~\ref{cor:every-instance}, $M$ admits a standard $(a,b)$-birigidity matrix representation over $\F$. By Proposition~\ref{prop:generic-is-abstract}(ii) and (iii) we therefore have 
$$M\preceq \BR_{a,b}^{(p)}(m,n)\preceq \BR_{a,b}(m,n).
$$
By Proposition~\ref{prop:generic-is-abstract}(i), the matroid
$\BR_{a,b}(m,n)$ is itself a linearly representable abstract
$(a,b)$-birigidity matroid. 

Hence, $\BR_{a,b}(m,n)$ is the freest linearly representable abstract
$(a,b)$-birigidity matroid.
\end{proof}

\begin{proof}[Proof of Corollary~\ref{cor:tensor-freest}]
The tensor--birigidity duality of Brakensiek, Dhar, Gao, Gopi, and
Larson~\cite{Bra24}, together with its abstract formulation in~\cite{CJJT25}, identifies linearly representable tensor products of the
two uniform matroids with the duals of linearly representable abstract
$(a,b)$-birigidity matroids.  Under this identification, the generic
tensor product corresponds to $\BR_{a,b}(m,n)$.  The conclusion is
therefore the dual form of Theorem~\ref{thm:global-maximality}.
\end{proof}

\begin{proof}[Proof of Corollary~\ref{cor:potential-recognition}]
For arbitrary integers $a,b\geq 2$, $m\geq a+1$, and
$n\geq b+1$, let $P_{a,b}(m,n)$ denote the
corresponding potential matroid. It was shown in~\cite{Bra25} that $P_{a,b}(m,n)$ is an
abstract $(a,b)$-birigidity matroid and that
\[
\BR_{a,b}(m,n)
\preceq P_{a,b}(m,n).
\]
By assumption, each $P_{a,b}(m,n)$ is linearly representable.
Theorem~\ref{thm:global-maximality} therefore gives
\[
P_{a,b}(m,n)
\preceq\BR_{a,b}(m,n), 
\]
and thus
\[
P_{a,b}(m,n)
=\BR_{a,b}(m,n)
\]
for every admissible quadruple $(m,n,a,b)$.

These equalities supply every instance of the potential--birigidity
equality used in the proof of Lemma~5.18 of~\cite{Bra25}, including
both the equality at the original parameters $(m,n,a,b)$ and the
equality at the auxiliary parameters obtained after scaling and
transposition. Hence the correctness proof of Algorithm~1
in~\cite{Bra25} applies. Since the runtime analysis of that algorithm
is unconditional, for every fixed $a,b\geq2$ it decides independence
in $\BR_{a,b}(m,n)$ deterministically in time polynomial in $m$ and
$n$.

Put $E=[m]\times[n]$, and let
\[
B=\BR_{a,b}(m,n),
\qquad
T=U_{m-a,m}\otimes U_{n-b,n}.
\]
By the tensor--birigidity duality~\cite{Bra24}, $T=B^*$. The
independence oracle obtained above for $B$ yields a rank oracle for
$B$ in polynomial time by the greedy algorithm. For every
$X\subseteq E$, the dual rank formula gives
\[
r_T(X)=|X|-r_B(E)+r_B(E\setminus X).
\]
Thus \(r_T(X)\) can be computed in polynomial time. Since $X$ is
independent in $T$ if and only if \(r_T(X)=|X|\), we obtain the
claimed deterministic polynomial-time independence algorithm for the
generic tensor product.	
\end{proof}

\section{Two-dimensional rigidity}\label{sec:two-rigidity}
In this section we establish analogous results for $2$-dimensional abstract rigidity matroids, culminating in the proof of Theorem~\ref{thm:rigidity-realization}.

\subsection{Abstract $2$-rigidity matroids and two representations}
Let \(n\geq 4\), 
and write \(ij\) for \(\{i,j\}\in\binom{[n]}{2}\).  For a set \(T\subseteq\{1,\ldots,n\}\), write
\[
K_T=\{ij:i,j\in T,\ i<j\}
\]
for the edge set of the complete graph on \(T\).

We recall the definition of an abstract $2$-rigidity matroid.
\begin{definition}
An \emph{abstract $2$-rigidity matroid} on $\binom{[n]}{2}$ is a matroid $M$ such that
\begin{enumerate}
	\item $\rank M =2n-3$;
	\item For every $T\in \binom{[n]}{4}$, $K_T$ is a circuit.
\end{enumerate}	
\end{definition}
\begin{definition}[The infinitesimal rigidity matrix, transposed]
Let $\F$ be a field, and let 
\[
p_i=(a_i,b_i)\in \F^2, \ \ i\in[n].
\]
The (transposed) two-dimensional infinitesimal rigidity matrix 
$$R(p_1,\ldots,p_n)
$$
has rows
\[
A_1,B_1,\ldots,A_n,B_n
\]
and one column for each edge \(ij\in \binom{[n]}{2}\).  The column \(ij\), where $i<j$, has entries $a_i-a_j, b_i-b_j, a_j-a_i,\ b_j-b_i$ in rows $A_i,B_i,A_j,B_j$ respectively
and all other entries are zero.
\end{definition}
\begin{definition}[The generic two-dimensional rigidity matroid]
	Fix a characteristic $p$, where $p=0$ or $p$ is prime. Let
	$a_1,b_1,\ldots,a_n,b_n$ be algebraically independent indeterminates
	over $k_p$. The generic two-dimensional rigidity matroid in
	characteristic $p$, denoted by $R_{2,n}^{(p)}$, is the column matroid of
	$R\bigl((a_1,b_1),\ldots,(a_n,b_n)\bigr)$ over the field $k_p(a_1,b_1,\ldots,a_n,b_n)$.
\end{definition}
The characteristic-free form of the Geiringer--Laman
theorem~\cite{PG27,Lam70,Whi96} identifies $R_{2,n}^{(p)}$ with the
$(2,3)$-sparsity matroid for every $p$. We denote this common matroid by
$R_{2,n}$. These matroids are invariant under permutations of $[n]$ and
compatible under restrictions to subsets of the vertex set. The generic
two-dimensional rigidity family, denoted by $\mathcal R_2$, consists of
the finite graphs $G$ whose edge sets are independent in $R_{2,v}$ after
identifying $V(G)$ with $[v]$, where $v=|V(G)|$. Suppose $\F$ is a field of characteristic $p$, and let
$a_1,b_1,\ldots,a_n,b_n$ be algebraically independent indeterminates
over $\F$. By Lemma~\ref{lem:scalar-extension}, the matrix
\[
R\bigl((a_1,b_1),\ldots,(a_n,b_n)\bigr)
\]
represents $R_{2,n}^{(p)}=R_{2,n}$ over
$\F(a_1,b_1,\ldots,a_n,b_n)$.

Let $M$ be an abstract $2$-rigidity matroid on $n$ vertices. Let 
$I=\{3,4,\ldots,n\}$, and  
\[
B=\{12\}\cup\{1i:i\in I\}\cup\{2i:i\in I\}.
\]
For any distinct $i,j\in I$, since $K_{\{1,2,i,j\}}$ is an $M$-circuit whose edges other than $ij$ belong to $B$, we have $ij\in \operatorname{cl}_M(B)$. Hence, $B$ spans $M$. Since $|B|=2n-3$, $B$ is a basis of $M$. Suppose $M$ is representable over a field $\F$. For each $ij\in B$, let $e_{ij}\in \F^B$ be the vector having $1$ in coordinate $ij$ and $0$ otherwise. Since $B$ is a basis and, for any distinct $i,j\in I$, $K_{\{1,2,i,j\}}$ is the fundamental circuit of $ij$ with respect to it, we may represent $M$ by a $(2n-3)\times \binom{n}{2}$-matrix, whose rows are indexed by $B$ and columns by $\binom{[n]}{2}$, such that the columns indexed by $B$ form an identity block, and the remaining columns $C_{ij}, i,j\in I, i<j$, satisfy (after normalization)
\begin{equation}\label{eq:alpha-beta-column}
	C_{ij}=e_{12}+\alpha_{ij}e_{1i}+\alpha_{ji}e_{1j}
	+\beta_{ij}e_{2i}+\beta_{ji}e_{2j},
\end{equation}
with non-zero coefficients. We call this a \emph{star-basis} representation.
\begin{theorem}[Star-basis normal form]\label{thm:abstract-normal-form}
Let \(M\) be an abstract $2$-rigidity matroid on \(\binom{[n]}{2}\), representable over \(\F\).  
Then \(M\) admits a representation by a star-basis matrix with non-basis columns
\begin{equation}\label{eq:star-column}
	C_{ij}=e_{12}+\frac{e_{1i}-e_{1j}}{x_i-x_j}
	+\frac{e_{2i}-e_{2j}}{y_j-y_i},
\end{equation}
for some parameters  \(x_i,y_i\in \F\), \(i\in I\), satisfying
\[
x_i\neq x_j,
\qquad
 y_i\neq y_j
\qquad (i,j\in I,\ i\neq j)
\]
and
\[
x_i\neq y_i
\qquad (i\in I).
\]
\end{theorem}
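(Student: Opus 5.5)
Starting from the star-basis representation \eqref{eq:alpha-beta-column}, we extract relations on the coefficients $\alpha_{ij},\beta_{ij}$ from suitable $K_4$-circuits containing few non-basis edges, exactly as in Lemma~\ref{lem:proportionality}. We then use the remaining freedom to rescale rows (compensating on basis columns) and non-basis columns to reach the form \eqref{eq:star-column}. The available rescalings are: row $1i$ by $\lambda_i$, row $2i$ by $\mu_i$, row $12$ by a global scalar, and each non-basis column $C_{ij}$ by a scalar. The target form has the coefficient of $e_{12}$ equal to $1$, coefficients $\pm 1/(x_i-x_j)$ on $e_{1i},e_{1j}$, and $\pm 1/(y_j-y_i)$ on $e_{2i},e_{2j}$.

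\textbf{Step 1 (relations from $K_{\{1,i,j,k\}}$ and $K_{\{2,i,j,k\}}$).} Take distinct $i,j,k\in I$ and consider the circuit $K_{\{1,i,j,k\}}$. Its non-basis edges are $ij,ik,jk$, and its basis edges are $1i,1j,1k$. The edges $12$ and $2i,2j,2k$ do not lie in it. Hence a dependency $\lambda C_{ij}+\mu C_{ik}+\nu C_{jk}+(\text{combination of }e_{1i},e_{1j},e_{1k})=0$ exists with $\lambda,\mu,\nu$ all non-zero, since $K_{\{1,i,j,k\}}$ is a circuit. Reading off the coordinates $12,2i,2j,2k$ gives
\[
\lambda+\mu+\nu=0,\quad \lambda\beta_{ij}+\mu\beta_{ik}=0,\quad \lambda\beta_{ji}+\nu\beta_{jk}=0,\quad \mu\beta_{ki}+\nu\beta_{kj}=0 .
\]
Eliminating $\lambda,\mu,\nu$ yields a cocycle-type identity for the $\beta$'s. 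Writing $\lambda=1$, $\mu=-\beta_{ij}/\beta_{ik}$, $\nu=-\beta_{ji}/\beta_{jk}$, we obtain two scalar equations:
\[
1=\frac{\beta_{ij}}{\beta_{ik}}+\frac{\beta_{ji}}{\beta_{jk}},\qquad \frac{\beta_{ij}\beta_{ki}}{\beta_{ik}}=-\frac{\beta_{ji}\beta_{kj}}{\beta_{jk}} .
\]
Symmetrically, $K_{\{2,i,j,k\}}$ gives the same identities for the $\alpha$'s.

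\textbf{Step 2 (solving the $\beta$-system).} The second identity says that $\beta_{ij}\beta_{jk}\beta_{ki}=-\beta_{ji}\beta_{kj}\beta_{ik}$. So the ratios $\beta_{ij}/\beta_{ji}$ form a multiplicative cocycle up to the sign $-1$. Fixing $3$ as a base vertex, we choose $\mu_i$ (scaling rows $2i$) and column scalings so that $\beta_{ji}=-\beta_{ij}$ for all pairs. Write $\beta_{ij}=1/(y_j-y_i)$, which defines the differences $y_j-y_i$ as nonzero quantities. The first identity then becomes
\[
\frac{1}{y_j-y_i}\cdot\frac{1}{\beta_{ik}}\text{-type additivity},
\]
which is exactly what is needed for these differences to be consistent, i.e. $(y_k-y_i)=(y_j-y_i)+(y_k-y_j)$. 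So $y$ exists as a potential on $I$, with $y_i\neq y_j$ for $i\neq j$ since $\beta_{ij}\neq 0$. The same reasoning with $\alpha$ and rows $1i$ gives $x_i$ with $\alpha_{ij}=1/(x_i-x_j)$.

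The main obstacle is the bookkeeping of the simultaneous normalizations. The column scalings must be shared between the $\alpha$-side and the $\beta$-side, while the coefficient of $e_{12}$ must stay equal to $1$. My plan is to first use only the ratio identities (the second one for each side), together with the $K_{\{1,2,i,j\}}$-independent row scalings, to fix the ratios. Then, for each pair, I will use the single remaining freedom to make the $e_{12}$-coefficient $1$. I also expect to need the additive identity on both sides simultaneously to fix the scale of each potential. There is a global affine ambiguity: $y\mapsto cy+d$, with $c$ absorbed by the row $12$ scaling.

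\textbf{Step 3 ($x_i\neq y_i$).} Finally, we consider a circuit such as $K_{\{1,2,i,j\}}$ minus an edge, or a dependency among columns $C_{ij},C_{ik},C_{jk}$ together with basis columns $e_{12},e_{1\ast},e_{2\ast}$ for a $K_4$ on $\{1,i,j,k\}$ viewed with vertex $2$ absent. The condition $x_i=y_i$ would force a proper subset of some $K_4$, say $K_{\{2,i,j,k\}}\setminus\{\text{edge}\}$ or the relevant five-edge set, to be dependent, contradicting that $K_4$'s are circuits. I would locate this by testing the rank of $\{C_{ij},C_{ik},C_{jk}\}$ together with the appropriate basis vectors, and check that its determinant factors through $x_i-y_i$.
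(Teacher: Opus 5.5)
Your Steps 1 and 2 follow the paper's route. The $12,2i,2j,2k$ coordinates of the $K_{\{1,i,j,k\}}$-dependency give $\beta_{ij}\beta_{jk}\beta_{ki}=-\beta_{ji}\beta_{kj}\beta_{ik}$. Scaling the rows $2i$, with base vertex $3$, then makes $\beta$ skew-symmetric. After that your first identity reads $1/\beta_{ik}=1/\beta_{ij}+1/\beta_{jk}$, which produces the potential $y$. The $\alpha$-side is identical via $K_{\{2,i,j,k\}}$.

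The ``bookkeeping obstacle'' you anticipate does not occur. Both identities are invariant under scaling the rows $1i$ and $2i$, and such scalings never touch the $e_{12}$-coordinate. So the two sides decouple, and no scaling of non-basis columns is needed (column scalings cannot change $\beta_{ij}/\beta_{ji}$ anyway). You should also handle $n=4$ separately, since Step 1 needs three elements of $I$; there a direct scaling of the rows $13,14,23,24$ suffices.

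The genuine gap is Step 3. The condition $x_i\neq y_i$ cannot be derived from any circuit of $M$, because it is not a property of the matrix at all. $S(x,y)$ depends on $y$ only through the differences $y_j-y_i$. Replacing every $y_t$ by $y_t+x_i-y_i$ therefore leaves the matrix literally unchanged while forcing $x_i=y_i$. Indeed, taking $3$ as base point, as you propose, naturally gives $x_3=y_3=0$. Hence no minor can ``factor through $x_i-y_i$'', and the contradiction you plan to find does not exist.

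The missing idea is to use the affine freedom as a normalization. For $a\in\F^\times$ and $c\in\F$, the matrix $S(x,ay+c)$ is obtained from $S(x,y)$ by scaling every row $2i$ by $a^{-1}$ and every basis column $2i$ by $a$. The multiplicative factor is absorbed by these rows, not by row $12$ as you suggest: scaling row $12$ and compensating on the non-basis columns rescales $x$ and $y$ simultaneously. So it suffices to pick $c\notin\{x_i-ay_i:i\in I\}$:
\begin{itemize}
\item Over an infinite field, or if $|I|<|\F|$, take $a=1$.
\item If $|I|=|\F|=q$ (which is allowed, since distinctness of the $y_i$ only gives $|I|\leq q$), you need one more step. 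Pick $r\neq s$ in $I$ and $a=(x_r-x_s)/(y_r-y_s)\neq 0$. Then $x_r-ay_r=x_s-ay_s$, so the excluded set has at most $q-1$ elements and a suitable $c$ exists.
\end{itemize}
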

Given two vectors $x=(x_i)_{i\in I}, y=(y_i)_{i\in I} \in \F^I$, with the $x_i$ distinct, and the $y_i$ distinct, denote by \(S(x,y)\) the star-basis representation given by~\eqref{eq:star-column}.
\begin{proof}
Choose a star-basis representation over $\F$ satisfying~\eqref{eq:alpha-beta-column}. For $n=4$ a direct scaling argument shows that~\eqref{eq:star-column} can be assumed for the unique non-basis column $C_{34}$. So, let us from now assume that $n\geq 5$, i.e. $|I|\geq 3$.  

Let \(i,j,k\in I\) be distinct.  Since
\[
K_{\{2,i,j,k\}}=\{2i,2j,2k,ij,ik,jk\}
\]
is dependent, and since \(2i,2j,2k\) are basis elements, the columns of
\begin{equation*}
	\begin{array}{c|ccc|ccc}
		&2i&2j&2k&ij&ik&jk\\ \hline
		12&0&0&0&1&1&1\\
		1i&0&0&0&\alpha_{ij}&\alpha_{ik}&0\\
		1j&0&0&0&\alpha_{ji}&0&\alpha_{jk}\\
		1k&0&0&0&0&\alpha_{ki}&\alpha_{kj}\\ 
		2i&1&0&0&\beta_{ij}&\beta_{ik}&0\\
		2j&0&1&0&\beta_{ji}&0&\beta_{jk}\\
		2k&0&0&1&0&\beta_{ki}&\beta_{kj}
	\end{array}
\end{equation*}
are dependent. Consequently, the columns of 
\begin{equation}\label{eq:alpha-matrix}
\begin{array}{c|ccc}
& ij & ik & jk \\ \hline
12 & 1 & 1 & 1 \\
1i & \alpha_{ij} & \alpha_{ik} & 0 \\
1j & \alpha_{ji} & 0 & \alpha_{jk} \\
1k & 0 & \alpha_{ki} & \alpha_{kj}
\end{array}
\end{equation}
are dependent. The determinant of the submatrix using rows \(1i,1j,1k\) gives
\begin{equation}\label{eq:alpha-product}
\alpha_{ij}\alpha_{jk}\alpha_{ki}
+
\alpha_{ik}\alpha_{ji}\alpha_{kj}=0.
\end{equation}
Define non-zero scalars
\[
r_3=1,
\qquad
r_i=-\frac{\alpha_{3i}}{\alpha_{i3}}
\quad (i\in I,\ i\neq 3).
\]
For every \(i\in I\), scale row \(1i\) by \(r_i\), and scale the basis column \(1i\) by \(r_i^{-1}\), so that the submatrix on the basis columns remains the identity.  For every ordered pair \((i,j)\) of distinct elements of \(I\), put
\[
\lambda_{ij}=r_i\alpha_{ij}.
\]
For pairs involving \(3\), the definition gives \(\lambda_{i3}=-\lambda_{3i}\).  For \(i,j\in I\setminus\{3\}\) with \(i\neq j\), equation \eqref{eq:alpha-product} applied to the triple \(3,i,j\) gives
\[
r_i\alpha_{ij}+r_j\alpha_{ji}=0,
\]
so again
\begin{equation}\label{eq:lambda-skew}
\lambda_{ji}=-\lambda_{ij}
\qquad (i,j\in I,\ i\neq j).
\end{equation}
The minor of \eqref{eq:alpha-matrix} using rows \(12,1i,1j\), after the above row scalings, is
\[
\det
\begin{pmatrix}
1&1&1\\
\lambda_{ij}&\lambda_{ik}&0\\
-\lambda_{ij}&0&\lambda_{jk}
\end{pmatrix}=0.
\]
Expanding and dividing by the non-zero product \(\lambda_{ij}\lambda_{ik}\lambda_{jk}\), one obtains
\begin{equation}\label{eq:lambda-additive}
\frac{1}{\lambda_{ij}}-\frac{1}{\lambda_{ik}}+
\frac{1}{\lambda_{jk}}=0
\qquad (i,j,k\in I\text{ distinct}).
\end{equation}
Define
\begin{equation}\label{eq:x-def}
x_3=0,
\qquad
x_i=-\frac{1}{\lambda_{3i}}
\quad (i\in I,\ i\neq 3).
\end{equation}
For every \(j\in I\setminus\{3\}\), equations \eqref{eq:lambda-skew} and \eqref{eq:x-def} give
\[
\frac{1}{\lambda_{3j}}=x_3-x_j,
\qquad
\frac{1}{\lambda_{j3}}=x_j-x_3.
\]
For \(i,j\in I\setminus\{3\}\) with \(i\neq j\), equation \eqref{eq:lambda-additive} applied to the triple \(3,i,j\) gives
\[
\frac{1}{\lambda_{ij}}=x_i-x_j.
\]
Hence, \(x_i\neq x_j\), and
\begin{equation*}
\lambda_{ij}=\frac{1}{x_i-x_j}
\qquad (i,j\in I,\ i\neq j).
\end{equation*}
The \(\beta\)-entries are treated in the same way, using the dependent sets
\[
K_{\{1,i,j,k\}}=\{1i,1j,1k,ij,ik,jk\}
\qquad (i,j,k\in I\text{ distinct}).
\]
Define non-zero scalars
\[
s_3=1,
\qquad
s_i=-\frac{\beta_{3i}}{\beta_{i3}}
\quad (i\in I,\ i\neq 3).
\]
For every \(i\in I\), scale row \(2i\) by \(s_i\), and scale the basis column \(2i\) by \(s_i^{-1}\).  For every ordered pair \((i,j)\) of distinct elements of \(I\), put
\[
\mu_{ij}=s_i\beta_{ij}.
\]
The identical determinant argument gives
\[
\mu_{ji}=-\mu_{ij}
\qquad (i,j\in I,\ i\neq j)
\]
and
\[
\frac{1}{\mu_{ij}}-\frac{1}{\mu_{ik}}+
\frac{1}{\mu_{jk}}=0
\qquad (i,j,k\in I\text{ distinct}).
\]
Define
\begin{equation*}
y_3=0,
\qquad
 y_i=\frac{1}{\mu_{3i}}
\quad (i\in I,\ i\neq 3).
\end{equation*}
So, \(y_i\neq y_j\) and
\begin{equation*}
\mu_{ij}=\frac{1}{y_j-y_i}
\qquad (i,j\in I,\ i\neq j).
\end{equation*}
After these row scalings and compensating basis-column scalings, equation \eqref{eq:alpha-beta-column} has become exactly
\[
C_{ij}=e_{12}+\frac{e_{1i}-e_{1j}}{x_i-x_j}
        +\frac{e_{2i}-e_{2j}}{y_j-y_i}
\qquad (i,j\in I,\ i<j).
\]
This proves the asserted star-basis form.

It remains to impose \(x_i-y_i\neq 0\). Suppose \(a\in\F^\times\) and \(c\in\F\), and put \(y_i'=ay_i+c\) for every \(i\in I\). The matrix \(S(x,y')\) is obtained from \(S(x,y)\) by multiplying every row indexed by \(2i\), \(i\in I\), by \(a^{-1}\), and then multiplying every basis column indexed by \(2i\) by \(a\). Hence \(S(x,y')\) represents the same matroid as \(S(x,y)\). It therefore suffices to choose \(a\in\F^\times\) and \(c\in\F\) such that
$$
c\notin\{x_i-ay_i:i\in I\}.
$$
If \(\F\) is infinite, we may take \(a=1\) and choose \(c\) outside this finite set. Suppose that \(\F\) is finite, and put \(q=|\F|\). Since the \(y_i\) are distinct, we have \(|I|\leq q\). If \(|I|<q\), we may again take \(a=1\), and choose an element $c$ not from the above set.
 If \(|I|=q\), choose distinct \(r,s\in I\) and put
$$
a=\frac{x_r-x_s}{y_r-y_s}\in\F^\times.
$$
Then \(x_r-ay_r=x_s-ay_s\), so the set \(\{x_i-ay_i:i\in I\}\) has at most \(q-1\) elements. We may therefore choose \(c\) outside this set. In every case, the resulting parameters satisfy
$$
x_i-y_i'=x_i-ay_i-c\neq0
\qquad(i\in I).
$$
Since \(a\neq0\), the parameters \(y_i'\) remain distinct. Replacing \(y\) by \(y'\) completes the proof.
\end{proof}

\begin{remark}
The proof uses the circuit property only for the sets \(K_{\{1,2,i,j\}}\).  For these sets, being circuits forces all five coordinates of \(C_{ij}\) in the rows \(12,1i,1j,2i,2j\) to be non-zero.  For the sets \(K_{\{1,i,j,k\}}\) and \(K_{\{2,i,j,k\}}\), the argument uses only dependence. The proof uses no other copies of $K_4$.
\end{remark}

Choosing $x$ and $y$ generically gives the freest among all $S(x,y)$-representable matroids in the given characteristic. Doing so in characteristic $0$ then gives the freest $S(x,y)$-representable matroid across all characteristics. Unsurprisingly, this is the generic rigidity matroid $R_{2,n}$. Below we make this more explicit by showing that every $\F$-representable abstract $2$-rigidity matroid admits an $R(p_1,\dots,p_n)$ representation over $\F$.
\subsection{The reverse conversion}

We show that the column matroid of \(S(x,y)\) can always be realized by an infinitesimal rigidity matrix over the same field.

\begin{proposition}\label{prop:rigidity-to-star}
Let \(x_i,y_i\in \F\), \(i\in I\), satisfy
\[
x_i\neq x_j,
\qquad
 y_i\neq y_j
\qquad
(i,j\in I,
 i\neq j),
\]
and
\[
x_i-y_i\neq 0
\qquad
(i\in I).
\]
Then there exist points \(p_1,\ldots,p_n\in \F^2\) such that the column matroid of \(S(x,y)\) is represented by
\[
R(p_1,\ldots,p_n).
\]
\end{proposition}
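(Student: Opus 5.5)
The plan is to build the points from a projective picture and then compare fundamental-circuit coordinates. For a $K_4$ on points $q_1,\dots,q_4$, the equilibrium stress is $\omega_{kl}=\lambda_k\lambda_l$. Here $(\lambda_k)$ is the affine dependence $\sum\lambda_kq_k=0$, $\sum\lambda_k=0$. Equilibrium at $k$ holds because $\sum_l\lambda_k\lambda_l(q_k-q_l)=\lambda_k\bigl(q_k\sum_l\lambda_l-\sum_l\lambda_lq_l\bigr)$ with $l$ ranging over all indices, the $l=k$ term being zero. Up to a common sign, $\lambda_k$ is the signed area $[\,\cdot\,]$ of the triangle on the other three points.

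Applied to $K_{\{1,2,i,j\}}$, the expansion of column $ij$ of $R(p)$ in the basis columns $12,1i,1j,2i,2j$ has two kinds of coefficient ratios. The coefficient of $1i$ relative to $ij$ is, up to sign, $[2ij]/[12i]$. The coefficient of $2i$ relative to $ij$ is $[1ij]/[12i]$. The factor $[12i]$ depends on $i$ alone and can be absorbed into a row scaling. So I want $[2ij]$ to factor as $g(i)g(j)(x_j-x_i)$ and $[1ij]$ as $h(i)h(j)(y_j-y_i)$.

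Concretely, put $p_2=(0,0)$ and $p_1=(1,0)$. Put $p_i$ at the intersection of the line through $p_2$ with direction $(1,x_i)$ and the line through $p_1$ with direction $(1,y_i)$. This gives $p_i=t_i(1,x_i)$ with $t_i=y_i/(y_i-x_i)$, which is defined precisely because $x_i\neq y_i$. Then $[2ij]=t_it_j(x_j-x_i)$ up to a constant. Likewise $p_i-p_1=s_i(1,y_i)$ with $s_i=x_i/(y_i-x_i)$, so $[1ij]$ is a constant multiple of $s_is_j(y_j-y_i)$. Also $[12i]$ is a constant multiple of $t_ix_i$.

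For this to be non-degenerate I need $x_i\neq0$ and $y_i\neq0$; otherwise $p_i$ coincides with $p_1$ or $p_2$, or $p_1,p_2,p_i$ are collinear. The matroid of $S(x,y)$ is unchanged under $x\mapsto x+c$ and $y\mapsto y+c$, since only differences $x_i-x_j$ and $y_j-y_i$ enter. It is also unchanged under the affine rescaling $y\mapsto ay+c'$ used at the end of the proof of Theorem~\ref{thm:abstract-normal-form}. The same rescalings apply to $x$, by scaling rows $1i$. So I will choose a common shift $c$ that keeps $x_i-y_i\neq0$ and avoids $\{-x_i\}\cup\{-y_i\}$.

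Next comes the verification. With these points, $B$ is independent in $R(p)$: each $p_i$ is off the line $p_1p_2$, so the star $B$ is a Henneberg-type construction, adding each vertex by two edges to non-collinear points. Then, using the stress formula for each $K_{\{1,2,i,j\}}$, I will compute the coordinates of column $ij$ in the basis $\{h_s:s\in B\}$. I will check that after rescaling row $1i$ by a factor depending only on $i$ (built from $t_i$ and $x_i$), rescaling row $2i$ similarly, and rescaling column $ij$, these coordinates are exactly those in \eqref{eq:star-column}. The sign conventions must be tracked so that the $1$-part reads $(e_{1i}-e_{1j})/(x_i-x_j)$ and the $2$-part reads $(e_{2i}-e_{2j})/(y_j-y_i)$; a global sign swap $y\mapsto -y$ can fix a mismatch. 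Two matrices whose basis columns form identity blocks and whose non-basis columns agree up to diagonal row/column scalings have the same column matroid. This concludes the argument.

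The main obstacle is the finite-field case of the shift. With $q=|\F|$, I must avoid up to $2|I|$ values of $c$ while also possibly needing $a\neq1$. I will first try to treat the two scalings separately: apply $x\mapsto x+c_1$ and $y\mapsto ay+c_2$ with independent parameters, since $S(x,y)$ allows an independent affine change on each family. The requirements $x_i+c_1\neq0$, $ay_i+c_2\neq0$ and $x_i+c_1\neq ay_i+c_2$ then become a counting argument like the one in Theorem~\ref{thm:abstract-normal-form}. If counting fails when $|I|=q$, I will instead move $p_1$ or $p_2$ to a point at infinity, using a projective change of coordinates. This removes the $x_i\neq0$ constraint, because the lines through a point at infinity are parallel and parametrised by intercepts.
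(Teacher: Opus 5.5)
\textbf{The point configuration you propose does not represent the column matroid of $S(x,y)$.} Your heuristic compares the coefficients of $1i$ and $2i$ with that of $ij$. But $S(x,y)$ is normalised so that the $12$-coordinate of every non-basis column is $1$. After the column scaling that achieves this, the $1i$-coordinate of column $ij$ is $\omega_{1i}/\omega_{12}=\lambda_i/\lambda_2=\pm[12j]/[1ij]$. So the $x$-differences must sit in the areas $[1ij]$ at $p_1$, not in $[2ij]$.

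Absorbing the remaining factors by row scalings of the rows $1i$ also needs more than $[1ij]=g(i)g(j)(x_i-x_j)$ for some $g$. It requires $[1ij]\propto[12i][12j](x_i-x_j)$, with a constant independent of $i,j$. If $p_1p_2$ is horizontal and $p_i-p_1=\mu_i(\xi_i,1)$, then $[1ij]\propto[12i][12j](\xi_i-\xi_j)$. Hence $x_i$ must be an affine function of the \emph{inverse} slope of $p_1p_i$, with the line $p_1p_2$ at parameter $\infty$. Likewise $y_i$ must be affine in the inverse slope of $p_2p_i$.

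In your configuration, $p_1p_i$ has slope $y_i$ and $p_2p_i$ has slope $x_i$. So $R(p)$ represents $S(1/y,1/x)$ up to affine changes of the parameters, which is in general a different matroid. Concretely, take $I=\{3,4,5\}$, $x=(1,2,3)$, $y=(2,3,4)$ over $\mathbb Q$; all your non-degeneracy conditions hold. In $S(x,y)$ one has $C_{34}-2C_{35}+C_{45}=0$, so the triangle $\{34,35,45\}$ is dependent. Your points $p_3=(2,2)$, $p_4=(3,6)$, $p_5=(4,12)$ are not collinear, so the same triangle is independent in $R(p)$. No sign change repairs this.

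\textbf{The finite-field obstacle comes from the same mis-parametrisation, and your remedies do not remove it.} Your construction needs $x_i\neq0\neq y_i$. When $|I|=|\F|$, the distinct $x_i$ exhaust $\F$, so every affine image of them still contains $0$. Sending $p_1$ or $p_2$ to infinity leaves the setting of $R(p_1,\ldots,p_n)$, which requires points of $\F^2$.

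The paper's choice settles both issues: $p_1=(0,0)$, $p_2=(1,0)$, $p_i=(x_i,1)/(x_i-y_i)$, so that $p_i-p_1\parallel(x_i,1)$ and $p_i-p_2\parallel(y_i,1)$. This is defined precisely when $x_i\neq y_i$, and then $p_i$ automatically lies off the line $p_1p_2$. The paper verifies an explicit five-term relation for each column $ij$, and a diagonal column rescaling turns it into $S(x,y)$ exactly. With this corrected configuration, your stress computation would be a valid and more conceptual substitute for that row-by-row check.
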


\begin{proof}
Define points
\begin{equation*}
p_1=(0,0),
\qquad
p_2=(1,0),
\qquad
p_i=\left(\frac{x_i}{x_i-y_i},\frac{1}{x_i-y_i}\right)
\quad (i\in I).
\end{equation*}
For every \(i\in I\), the entries in rows \(A_i,B_i\) of the column \(1i\) are
\[
\left(\frac{x_i}{x_i-y_i},\frac{1}{x_i-y_i}\right),
\]
and the entries in rows \(A_i,B_i\) of the column \(2i\) are
\[
\left(\frac{y_i}{x_i-y_i},\frac{1}{x_i-y_i}\right).
\]
The determinant of the corresponding \(2\times 2\) matrix is
\[
\frac{x_i}{(x_i-y_i)^2}-\frac{y_i}{(x_i-y_i)^2}
=\frac{1}{x_i-y_i}\neq 0.
\]
Thus any linear dependence among the columns in \(B\) has zero coefficients on \(1i\) and \(2i\) for every \(i\in I\).  The remaining coefficient on \(12\) is then also zero, so \(B\) is independent in \(R(p_1,\ldots,p_n)\).

Fix \(i,j\in I\) with \(i<j\).  Define the following linear combination of the columns of \(R(p_1,\ldots,p_n)\):
\begin{align*}
\rho_{ij}={}&\frac{(x_i-x_j)(y_j-y_i)}{(x_i-y_i)(x_j-y_j)}\,12
 +\frac{y_i-y_j}{x_j-y_j}\,1i
 +\frac{y_j-y_i}{x_i-y_i}\,1j  \\
&+\frac{x_j-x_i}{x_j-y_j}\,2i
 +\frac{x_i-x_j}{x_i-y_i}\,2j. 
\end{align*}
We claim that \(\rho_{ij}\) is equal to the column \(ij\).  In every row different from
\[
A_1,B_1,A_2,B_2,A_i,B_i,A_j,B_j,
\]
both columns have entry zero.  We therefore compare the remaining rows.

The entries of \(\rho_{ij}\) in rows \(A_1\) and \(B_1\) are
\[
-\frac{(x_i-x_j)(y_j-y_i)}{(x_i-y_i)(x_j-y_j)}
-\frac{x_i(y_i-y_j)}{(x_i-y_i)(x_j-y_j)}
-\frac{x_j(y_j-y_i)}{(x_i-y_i)(x_j-y_j)}=0
\]
and
\[
-\frac{y_i-y_j}{(x_i-y_i)(x_j-y_j)}
-\frac{y_j-y_i}{(x_i-y_i)(x_j-y_j)}=0.
\]
These are the entries of the column \(ij\) in rows \(A_1\) and \(B_1\).

The entries of \(\rho_{ij}\) in rows \(A_2\) and \(B_2\) are
\[
\frac{(x_i-x_j)(y_j-y_i)}{(x_i-y_i)(x_j-y_j)}
-\frac{y_i(x_j-x_i)}{(x_i-y_i)(x_j-y_j)}
-\frac{y_j(x_i-x_j)}{(x_i-y_i)(x_j-y_j)}=0
\]
and
\[
-\frac{x_j-x_i}{(x_i-y_i)(x_j-y_j)}
-\frac{x_i-x_j}{(x_i-y_i)(x_j-y_j)}=0.
\]
These are the entries of the column \(ij\) in rows \(A_2\) and \(B_2\).

The entries of \(\rho_{ij}\) in rows \(A_i\) and \(B_i\) are
\begin{align*}
\frac{x_i(y_i-y_j)+y_i(x_j-x_i)}{(x_i-y_i)(x_j-y_j)}
&=\frac{x_jy_i-x_iy_j}{(x_i-y_i)(x_j-y_j)}  \\
&=\frac{x_i}{x_i-y_i}-\frac{x_j}{x_j-y_j}
\end{align*}
and
\begin{align*}
\frac{(y_i-y_j)+(x_j-x_i)}{(x_i-y_i)(x_j-y_j)}
&=\frac{(x_j-y_j)-(x_i-y_i)}{(x_i-y_i)(x_j-y_j)} \\
&=\frac{1}{x_i-y_i}-\frac{1}{x_j-y_j}.
\end{align*}
These are precisely the entries of the column \(ij\) in rows \(A_i\) and \(B_i\), respectively.

The entries of \(\rho_{ij}\) in rows \(A_j\) and \(B_j\) are
\begin{align*}
\frac{x_j(y_j-y_i)+y_j(x_i-x_j)}{(x_i-y_i)(x_j-y_j)}
&=\frac{x_iy_j-x_jy_i}{(x_i-y_i)(x_j-y_j)}  \\
&=\frac{x_j}{x_j-y_j}-\frac{x_i}{x_i-y_i}
\end{align*}
and
\begin{align*}
\frac{(y_j-y_i)+(x_i-x_j)}{(x_i-y_i)(x_j-y_j)}
&=\frac{(x_i-y_i)-(x_j-y_j)}{(x_i-y_i)(x_j-y_j)} \\
&=\frac{1}{x_j-y_j}-\frac{1}{x_i-y_i}.
\end{align*}
These are precisely the entries of the column \(ij\) in rows \(A_j\) and \(B_j\), respectively.  Hence \(\rho_{ij}=ij\).  Therefore
\begin{align}\label{eq:unscaled-relation}
 ij={}&\frac{(x_i-x_j)(y_j-y_i)}{(x_i-y_i)(x_j-y_j)}\,12
 +\frac{y_i-y_j}{x_j-y_j}\,1i
 +\frac{y_j-y_i}{x_i-y_i}\,1j  \\
&+\frac{x_j-x_i}{x_j-y_j}\,2i
 +\frac{x_i-x_j}{x_i-y_i}\,2j. \nonumber
\end{align}
Now apply the non-zero column scalings
\begin{equation}\label{eq:column-scalings}
\widehat{12}=12,
\qquad
\widehat{1i}=-(x_i-y_i)\,1i,
\qquad
\widehat{2i}=-(x_i-y_i)\,2i
\qquad (i\in I),
\end{equation}
and
\begin{equation}\label{eq:nonbasis-column-scaling}
\widehat{ij}=\frac{(x_i-y_i)(x_j-y_j)}{(x_i-x_j)(y_j-y_i)}\,ij
\qquad (i,j\in I,
\ i<j).
\end{equation}
Multiplying \eqref{eq:unscaled-relation} by the scalar in \eqref{eq:nonbasis-column-scaling} and then using \eqref{eq:column-scalings}, the coefficient of \(\widehat{12}\) becomes \(1\), the coefficients of \(\widehat{1i}\) and \(\widehat{1j}\) become
\[
\frac{1}{x_i-x_j},
\qquad
\frac{1}{x_j-x_i},
\]
and the coefficients of \(\widehat{2i}\) and \(\widehat{2j}\) become
\[
\frac{1}{y_j-y_i},
\qquad
\frac{1}{y_i-y_j}.
\]
Thus the \(B\)-coordinate vector of every non-basis column is precisely the corresponding column of \(S(x,y)\).  Since non-zero column scalings and passage to coordinates in an independent column basis preserve the represented matroid, the column matroid of \(R(p_1,\ldots,p_n)\) is equal to the column matroid of \(S(x,y)\).
\end{proof}
Combining Theorem~\ref{thm:abstract-normal-form} with
Proposition~\ref{prop:rigidity-to-star} gives the claimed
rigidity-matrix realization.
\begin{proof}[Proof of Theorem~\ref{thm:rigidity-realization}]
Let $M$ be an abstract $2$-rigidity matroid, representable over a field $\F$.
Theorem~\ref{thm:abstract-normal-form} represents $M$ by a matrix $S(x,y)$ of the form
\eqref{eq:star-column}, where the parameters $x_i,y_i\in \F$, $i\in I$ satisfy
\[
x_i\neq x_j,
\qquad
 y_i\neq y_j
\qquad
(i,j\in I,
 i\neq j),
\]
and 
\[
x_i-y_i\neq 0
\qquad (i\in I).
\]
Proposition~\ref{prop:rigidity-to-star} applies to these parameters.  Hence there exist points \(p_1,\ldots,p_n\in \F^2\) such that \(R(p_1,\ldots,p_n)\) represents the column matroid of \(S(x,y)\).  Since \(S(x,y)\) represents \(M\), the matrix \(R(p_1,\ldots,p_n)\) represents \(M\).
\end{proof}

\section{Two-dimensional hyperconnectivity}\label{sec:hyperconnectivity}
\subsection{Hyperconnectivity matrices and star-basis form with $K_{3,3}$-circuits}

We retain the notation $\F$, $I$, $B$, and $S(x,y)$ from Section~\ref{sec:two-rigidity}. By
Theorem~\ref{thm:abstract-normal-form}, every $\F$-representable
abstract $2$-rigidity matroid admits a representation by a matrix
$S(x,y)$ of the form \eqref{eq:star-column}.

\begin{definition}[The two-dimensional hyperconnectivity matrix]\label{def:hyper-matrix}
Let
\[
q_i=(c_i,d_i)\in \F^2
\qquad (1\leq i\leq n).
\]
The transposed form of the usual two-dimensional hyperconnectivity matrix is denoted by
\[
H(q_1,\ldots,q_n).
\]
It has rows
\[
C_1,D_1,\ldots,C_n,D_n
\]
and one column \(h_{ij}\) for each edge \(ij\), \(i<j\).  The column \(h_{ij}\) has entries
\[
H_{C_i,ij}=c_j,
\qquad
H_{D_i,ij}=d_j,
\]
\[
H_{C_j,ij}=-c_i,
\qquad
H_{D_j,ij}=-d_i,
\]
and all other entries equal to zero.
\end{definition}
\begin{definition}[The generic two-dimensional hyperconnectivity matroid]
	Fix a characteristic $p$, where $p=0$ or $p$ is prime. Let
	$c_1,d_1,\ldots,c_n,d_n$ be algebraically independent indeterminates
	over $k_p$. The generic two-dimensional hyperconnectivity matroid in
	characteristic $p$, denoted by $H_{2,n}^{(p)}$, is the column matroid of
$H\bigl((c_1,d_1),\ldots,(c_n,d_n)\bigr)$
	over the field $k_p(c_1,d_1,\ldots,c_n,d_n)$.
\end{definition}
It was proved in~\cite[Corollary~1.13]{Tyo26} that
\begin{equation}\label{eq:Hcharindep}
H_{2,n}^{(p)}=H_{2,n}^{(0)}
\qquad\text{for every prime }p.
\end{equation}
We denote this common matroid by $H_{2,n}$. These matroids are invariant
under permutations of $[n]$ and compatible under restrictions to
subsets of the vertex set. The generic two-dimensional
hyperconnectivity family, denoted by $\mathcal H_2$, consists of the
finite graphs $G$ whose edge sets are independent in $H_{2,v}$ after
identifying $V(G)$ with $[v]$, where $v=|V(G)|$.

Suppose $\F$ is a field of characteristic $p$, and let
$c_1,d_1,\ldots,c_n,d_n$ be algebraically independent indeterminates
over $\F$. By Lemma~\ref{lem:scalar-extension}, the matrix
\[
H\bigl((c_1,d_1),\ldots,(c_n,d_n)\bigr)
\]
represents $H_{2,n}^{(p)}=H_{2,n}$ over
$\F(c_1,d_1,\ldots,c_n,d_n)$.

We now show that under the \(K_{3,3}\)-circuit condition we may assume \(y_i=1/x_i\) for all $i\in I$. Throughout this subsection, assume that $n\geq6$.
\begin{proposition}\label{prop:k33-fractional}
Let \(S(x,y)\) be a star-basis matrix defined in
\eqref{eq:star-column}.  Suppose further that in the column matroid of \(S(x,y)\) every copy of \(K_{3,3}\) is a circuit.  Then there are parameters \(X_i,Y_i\in\F^\times\), \(i\in I\), such that the column matroids of \(S(X,Y)\) and \(S(x,y)\) are the same, and
\begin{equation}\label{eq:Y-one-over-X}
Y_i=\frac{1}{X_i}
\qquad (i\in I).
\end{equation}
\end{proposition}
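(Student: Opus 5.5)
The plan is to extract, from the $K_{3,3}$-circuit condition, an algebraic relation between the $x$- and $y$-parameters which says that $i\mapsto y_i$ is the restriction of a Möbius transformation of $x_i$. Then the affine reparametrisations already used in the proof of Theorem~\ref{thm:abstract-normal-form} will bring this Möbius map to the form $y=1/x$.

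\textbf{Step 1: the symmetries.} First I record the symmetries of $S(x,y)$. As in the last part of the proof of Theorem~\ref{thm:abstract-normal-form}, replacing $y_i$ by $ay_i+c$ (with $a\neq0$) does not change the column matroid: rescale the rows $2i$ and compensate on the basis columns $2i$. The identical argument on the rows $1i$ shows that the same holds for $x_i\mapsto a'x_i+c'$. So only the pair of affine classes of $x$ and $y$ matters.

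\textbf{Step 2: the cross-ratio identity.} For distinct $i,j,k,l\in I$ (there are four since $n\ge 6$), consider the copy of $K_{3,3}$ with sides $\{1,i,j\}$ and $\{2,k,l\}$. Its edges are the basis edges $12,1k,1l,2i,2j$ and the non-basis edges $ik,il,jk,jl$. Dependence of these nine columns is equivalent to dependence of the four non-basis columns after deleting the five basis rows $12,1k,1l,2i,2j$. What remains is a $4\times4$ matrix in the rows $1i,1j,2k,2l$, with entries of the form $\pm1/(x_\ast-x_\ast)$ and $\pm1/(y_\ast-y_\ast)$ read off from~\eqref{eq:star-column}. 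Setting its determinant to zero and clearing denominators, I expect to obtain exactly an equality of cross-ratios,
\[
\frac{(x_i-x_k)(x_j-x_l)}{(x_i-x_l)(x_j-x_k)}=\frac{(y_i-y_k)(y_j-y_l)}{(y_i-y_l)(y_j-y_k)},
\]
possibly after permuting the indices. This is the first computation to carry out carefully.

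\textbf{Step 3: a global Möbius map.} Fix three indices $3,4,5\in I$. There is a unique Möbius transformation $\phi$ with $\phi(x_t)=y_t$ for $t=3,4,5$. Applying the cross-ratio identity to the quadruples $\{3,4,5,i\}$ gives $y_i=\phi(x_i)$ for every $i\in I$.

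\textbf{Step 4: excluding the affine case (the main obstacle).} If $\phi$ is affine, the identity above holds trivially and carries no information. I must show that this case violates minimality of some $K_{3,3}$-circuit. After the affine changes of Step~1 we may take $y=x$ up to translation. By Proposition~\ref{prop:rigidity-to-star}, $\phi(x)=x+c$ corresponds to all points $p_i$, $i\in I$, lying on the line $\{B=-1/c\}$. The degenerate configuration $y=x$ is not available, since $x_i\ne y_i$. I would then exhibit a proper subset of some $K_{3,3}$ that is dependent. The first attempt is the subgraph induced on the collinear vertices $i,j,k,l$ together with one of $1,2$; alternatively, I would check directly that a $3\times3$ minor of the $4\times4$ matrix of Step~2, which circuit minimality requires to be nonzero, vanishes identically when $y_t-x_t$ is constant. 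This case analysis is where I expect the real work to lie.

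\textbf{Step 5: normalisation.} Once $\phi$ is non-affine, write $\phi(x)=\alpha+\beta/(x-\gamma)$ with $\beta\neq0$. Set $X_i=x_i-\gamma$, which is nonzero because $y_i=\phi(x_i)$ is finite, and $Y_i=(y_i-\alpha)/\beta$. By Step~1, $S(X,Y)$ has the same column matroid as $S(x,y)$, and $Y_i=1/X_i$ with $X_i,Y_i\in\F^\times$, as required.
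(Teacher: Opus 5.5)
Your Steps 1--3 and 5 are correct, and they follow a different route from the paper. The paper gets the M\"obius relation from the copies $\{1,2,i\}\sqcup\{j,k,\ell\}$. There a $3\times 3$ determinant says that the points $\bigl(1/(x_3-x_t),\,1/(y_t-y_3)\bigr)$, $t\in I\setminus\{3\}$, are collinear. The paper uses the copies $\{1,i,j\}\sqcup\{2,k,\ell\}$ only to rule out the degenerate case. You extract the relation from this second family as well, and the cross-ratio formulation makes the normalisation of Step~5 transparent. Your Step~2 computation checks out: up to sign, the $4\times4$ determinant is
\[
\frac{1}{(x_i-x_l)(x_j-x_k)(y_i-y_k)(y_j-y_l)}-\frac{1}{(x_i-x_k)(x_j-x_l)(y_i-y_l)(y_j-y_k)}.
\]
Its vanishing is exactly your cross-ratio identity. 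Sharp $3$-transitivity of M\"obius maps on $\F\cup\{\infty\}$ makes Step~3 valid over every field.

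The gap is Step~4, which you leave open. It is the only step that uses that the copies of $K_{3,3}$ are circuits rather than merely dependent, since an affine $\phi$ satisfies all the cross-ratio identities. Your second suggestion cannot work. Every $4\times3$ submatrix of the $4\times4$ matrix of Step~2 has full rank for all parameters; for example, deleting column $ik$ and row $1i$ leaves the minor $\pm1/\bigl((x_j-x_l)(y_i-y_l)(y_j-y_k)\bigr)\neq0$. So the minimality conditions visible in that matrix hold automatically, in the affine case too, and the failure only shows up once row $12$ is kept.

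The missing idea is that the four non-basis columns are themselves dependent. If $y_t=\lambda x_t+\mu$ for all $t\in I$, then $(x_a-x_b)C_{ab}=(x_a-x_b)e_{12}+e_{1a}-e_{1b}-\lambda^{-1}(e_{2a}-e_{2b})$, and hence
\[
(x_i-x_k)C_{ik}-(x_i-x_l)C_{il}-(x_j-x_k)C_{jk}+(x_j-x_l)C_{jl}=0
\]
with all coefficients non-zero. Thus the $4$-cycle $ik,il,jk,jl$ is a dependent proper subset of the copy of $K_{3,3}$ with sides $\{1,i,j\}\sqcup\{2,k,l\}$, a contradiction; this is how the paper shows $\beta\neq0$. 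Your geometric first attempt points in this direction: four collinear points $p_i,p_j,p_k,p_l$ make this $4$-cycle dependent in $R(p)$, since its columns lie in a $3$-dimensional space. As written, however, it neither pins down the subset (no vertex $1$ or $2$ is needed) nor verifies the dependence.
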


\begin{proof}
Let \(i,j,k,\ell\in I\) be distinct.  Consider the copy of \(K_{3,3}\) with bipartition
\[
\{1,2,i\}\sqcup\{j,k,\ell\}.
\]
Its edges are
\[
1j,1k,1\ell,
\qquad
2j,2k,2\ell,
\qquad
ij,ik,i\ell.
\]
The first six of these are basis edges. So, similarly to the argument in the proof of Theorem~\ref{thm:abstract-normal-form},
we obtain that the three columns
\[
\begin{pmatrix}
1\\[2pt]
\dfrac{1}{x_i-x_j}\\[6pt]
\dfrac{1}{y_j-y_i}
\end{pmatrix},
\qquad
\begin{pmatrix}
1\\[2pt]
\dfrac{1}{x_i-x_k}\\[6pt]
\dfrac{1}{y_k-y_i}
\end{pmatrix},
\qquad
\begin{pmatrix}
1\\[2pt]
\dfrac{1}{x_i-x_\ell}\\[6pt]
\dfrac{1}{y_\ell-y_i}
\end{pmatrix}
\]
are dependent.  In other words,
\begin{equation}\label{eq:k33-det}
\det
\begin{pmatrix}
1&1&1\\[4pt]
\dfrac{1}{x_i-x_j}&\dfrac{1}{x_i-x_k}&\dfrac{1}{x_i-x_\ell}\\[10pt]
\dfrac{1}{y_j-y_i}&\dfrac{1}{y_k-y_i}&\dfrac{1}{y_\ell-y_i}
\end{pmatrix}=0.
\end{equation}
Apply this with \(i=3\).  Since \(n\geq 6\), the set \(I\setminus\{3\}\) has at least three elements.  Put
\[
 u_t=\frac{1}{x_3-x_t},
 \qquad
 v_t=\frac{1}{y_t-y_3}
 \qquad (t\in I\setminus\{3\}).
\]
Equation \eqref{eq:k33-det}, for all triples in \(I\setminus\{3\}\), says exactly that the determinant
\[
\det
\begin{pmatrix}
1&1&1\\
u_j&u_k&u_\ell\\
v_j&v_k&v_\ell
\end{pmatrix}
\]
vanishes for all distinct \(j,k,\ell\in I\setminus\{3\}\).  

Subtracting the first column from the second and third gives
\[
\det
\begin{pmatrix}
	1&1&1\\
	u_j&u_k&u_\ell\\
	v_j&v_k&v_\ell
\end{pmatrix}
=
\det
\begin{pmatrix}
	u_k-u_j&u_\ell-u_j\\
	v_k-v_j&v_\ell-v_j
\end{pmatrix}.
\]
Hence the determinant is zero if and only if
\[
(u_k-u_j)(v_\ell-v_j)=(u_\ell-u_j)(v_k-v_j).
\]
Since the \(x_t\) are distinct, the \(u_t\) are distinct. Thus \(u_k-u_j\ne0\), and the last equation says that
\[
v_\ell-v_j=\frac{v_k-v_j}{u_k-u_j}(u_\ell-u_j).
\]
Therefore \((u_\ell,v_\ell)\) lies on the affine line through
\((u_j,v_j)\) and \((u_k,v_k)\). Fix some $j,k\in I\setminus \{3\}$, e.g. $j=4$ and $k=5$. Then all points $(u_t,v_t), t\in I\setminus \{3\}$ lie on a common affine line $v=\alpha u+\beta$ with
\[
\alpha=\frac{v_5-v_4}{u_5-u_4},
\qquad
\beta=v_4-\alpha u_4.
\]
Since the $y_i$ are distinct, we have $v_5\ne v_4$, and since the $x_i$ are distinct, we have $u_5\ne u_4$. Hence $\alpha\neq 0$.

Thus, we have
\begin{equation}\label{eq:line-equation}
\frac{1}{y_t-y_3}=\frac{\alpha}{x_3-x_t}+\beta
\qquad (t\in I\setminus\{3\}).
\end{equation}
Solving for \(y_t\) gives
\begin{equation}\label{eq:fractional-relation-alpha-beta}
y_t=y_3+\frac{x_3-x_t}{\alpha+\beta(x_3-x_t)}
\qquad (t\in I\setminus\{3\}), 
\end{equation}
where the denominator in \eqref{eq:fractional-relation-alpha-beta} is non-zero for each \(t\).

We next show that \(\beta\neq 0\).  Suppose, for a contradiction, that \(\beta=0\).  Then \eqref{eq:line-equation} gives
\[
y_t-y_3=\frac{x_3-x_t}{\alpha}
\qquad (t\in I\setminus\{3\}),
\]
and hence there are \(\lambda\in\F^\times\) and \(\mu\in\F\) such that
\begin{equation}\label{eq:affine-relation}
y_r=\lambda x_r+\mu
\qquad (r\in I).
\end{equation}
Choose $i,j,k,\ell\in I$ such that $i<j<k<\ell$. Suppose $a,b\in I$ and $a<b$. By \eqref{eq:star-column} and \eqref{eq:affine-relation}, we have
\[
(x_a-x_b)C_{ab}
=(x_a-x_b)e_{12}+e_{1a}-e_{1b}
-\frac{1}{\lambda}(e_{2a}-e_{2b}).
\]
Taking the alternating sum of this identity for
$(a,b)=(i,k),(i,\ell),(j,k),(j,\ell)$ gives
\[
(x_i-x_k)C_{ik}
-(x_i-x_\ell)C_{i\ell}
-(x_j-x_k)C_{jk}
+(x_j-x_\ell)C_{j\ell}=0.
\]
Since the $x_r$ are distinct, all four coefficients in this relation
are non-zero. Thus the four columns $C_{ik},C_{i\ell},C_{jk},C_{j\ell}$
are dependent. The corresponding four edges form a proper subset of
the copy of $K_{3,3}$ with bipartition
$\{1,i,j\}\sqcup\{2,k,\ell\}$, contradicting the assumption that this
copy is a circuit. Therefore $\beta\neq0$.

Define, for every \(r\in I\),
\begin{equation}\label{eq:X-def}
X_r=\alpha+\beta(x_3-x_r)
\end{equation}
and
\begin{equation}\label{eq:Y-def}
Y_r=\frac{1-\beta(y_r-y_3)}{\alpha}.
\end{equation}
For \(r=3\), equations \eqref{eq:X-def} and \eqref{eq:Y-def} give \(X_3=\alpha\) and \(Y_3=1/\alpha\).  For \(r\neq3\), we have \(X_r\neq0\) by the observation following
\eqref{eq:fractional-relation-alpha-beta}, and equation \eqref{eq:line-equation} gives
\[
y_r-y_3=\frac{x_3-x_r}{\alpha+\beta(x_3-x_r)}=\frac{x_3-x_r}{X_r}.
\]
Consequently
\[
Y_r=\frac{1-\beta(x_3-x_r)/X_r}{\alpha}
=\frac{X_r-\beta(x_3-x_r)}{\alpha X_r}
=\frac{1}{X_r}.
\]
Thus \eqref{eq:Y-one-over-X} holds for every \(r\in I\).  Also, since \(\beta\neq0\) and the \(x_r\) are distinct, the \(X_r\) are distinct.

It remains to observe that \(S(X,Y)\) represents the same matroid as \(S(x,y)\).  For all distinct \(i,j\in I\), we have
\[
X_i-X_j=-\beta(x_i-x_j)
\]
and
\[
Y_j-Y_i=-\frac{\beta}{\alpha}(y_j-y_i).
\]
Therefore replacing \(x,y\) by \(X,Y\) only multiplies all entries in the rows \(1i\), \(i\in I\), by the common non-zero scalar \(-1/\beta\), and all entries in the rows \(2i\), \(i\in I\), by the common non-zero scalar \(-\alpha/\beta\).  These row scalings, together with the inverse scalings of the corresponding basis columns, do not change the represented matroid and keep the basis block equal to the identity.  Hence \(S(X,Y)\) and \(S(x,y)\) represent the same matroid.
\end{proof}
\subsection{Conversion to the hyperconnectivity matrix}
We now convert the normalized star-basis representation into the two-dimensional hyperconnectivity matrix.
\begin{proposition}\label{prop:hyper-conversion}
Let \(x_i\in\F^\times\), \(i\in I\), be distinct, and set
\[
y_i=\frac{1}{x_i}
\qquad (i\in I).
\]
Then the column matroid of \(S(x,y)\) is represented by the hyperconnectivity matrix
\[
H(q_1,\ldots,q_n),
\]
where
\begin{equation*}
q_1=(c_1,d_1)=(1,0),
\qquad
q_2=(c_2,d_2)=(0,1),
\end{equation*}
and
\begin{equation*}
q_i=(c_i,d_i)=(1,x_i)=\left(1,\frac{1}{y_i}\right)
\qquad (i\in I).
\end{equation*}
\end{proposition}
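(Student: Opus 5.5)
The plan follows the template of Proposition~\ref{prop:rigidity-to-star}. First show that the columns of $H=H(q_1,\ldots,q_n)$ indexed by the star basis $B$ are independent. Then express every non-basis column $h_{ij}$, $i,j\in I$, as an explicit combination of basis columns. Finally, rescale columns by non-zero scalars so that the $B$-coordinates of each non-basis column become exactly the column $C_{ij}$ of $S(x,y)$ in \eqref{eq:star-column}. Since non-zero column scalings and passage to coordinates in an independent column basis preserve the column matroid, this proves the claim.

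\emph{Independence of $B$.} With $q_1=(1,0)$, $q_2=(0,1)$ and $q_i=(1,x_i)$, the columns are as follows.
\begin{itemize}
\item $h_{1i}$ has entries $1,x_i$ in rows $C_1,D_1$ and entries $-1,0$ in rows $C_i,D_i$.
\item $h_{2i}$ has entries $1,x_i$ in rows $C_2,D_2$ and entries $0,-1$ in rows $C_i,D_i$.
\item $h_{12}$ has entry $1$ in row $D_1$, entry $-1$ in row $C_2$, and zeros elsewhere.
\end{itemize}
Restricted to rows $C_i,D_i$, the columns $h_{1i}$ and $h_{2i}$ are $(-1,0)$ and $(0,-1)$, and they are the only basis columns supported there. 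So any dependence among basis columns has zero coefficients on all $1i$ and $2i$. The coefficient on $h_{12}$ then vanishes because $h_{12}\neq 0$.

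\emph{Fundamental relation.} For $i<j$ in $I$, the column $h_{ij}$ has entries $1,x_j$ in rows $C_i,D_i$ and $-1,-x_i$ in rows $C_j,D_j$. Reading off rows $C_i,D_i,C_j,D_j$ and then row $D_1$ determines all coefficients, giving the candidate identity
\[
h_{ij}=(x_i-x_j)h_{12}-h_{1i}+h_{1j}-x_jh_{2i}+x_ih_{2j}.
\]
It remains to confirm that rows $C_1$, $C_2$ and $D_2$ also balance: they give $-1+1=0$, $-(x_i-x_j)-x_j+x_i=0$, and $-x_ix_j+x_ix_j=0$. All other rows are zero on both sides.

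\emph{Rescaling.} Divide $h_{ij}$ by $x_i-x_j\neq 0$. Replace $h_{1i}$ by $\widehat{1i}=-h_{1i}$ and $h_{2i}$ by $\widehat{2i}=-h_{2i}/x_i$; here we use $x_i\neq 0$. In the rescaled relation the coefficients become:
\begin{itemize}
\item $1$ on $\widehat{12}=h_{12}$;
\item $\pm 1/(x_i-x_j)$ on $\widehat{1i},\widehat{1j}$;
\item $\pm x_ix_j/(x_i-x_j)$ on $\widehat{2i},\widehat{2j}$.
\end{itemize}
Since $y_i=1/x_i$, we have $y_j-y_i=(x_i-x_j)/(x_ix_j)$, so these last coefficients equal $\pm 1/(y_j-y_i)$. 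Hence the $B$-coordinates of each rescaled $h_{ij}$ match the column $C_{ij}$ of $S(x,y)$ in \eqref{eq:star-column}, signs included.

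The main difficulty is sign bookkeeping in this scaling step, in particular choosing $\widehat{2i}$ with the factor $-1/x_i$ so that the $\beta$-type coefficients become $1/(y_j-y_i)$ rather than its reciprocal or negative. Once the fundamental relation is verified, everything else is routine.
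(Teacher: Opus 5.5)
Your proposal is correct and follows the paper's proof essentially step by step. You show independence of $B$ via the rows $C_i,D_i$, derive the same fundamental relation $h_{ij}=(x_i-x_j)h_{12}-h_{1i}+h_{1j}-x_jh_{2i}+x_ih_{2j}$, and use the same rescalings $\widehat{1i}=-h_{1i}$, $\widehat{2i}=-h_{2i}/x_i$ and $h_{ij}\mapsto h_{ij}/(x_i-x_j)$, together with $x_ix_j/(x_i-x_j)=1/(y_j-y_i)$.
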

\begin{proof}
Let \(h_{ij}\) denote the column of \(H(q_1,\ldots,q_n)\) indexed by the edge \(ij\).  The double-star set \(B\) is independent in \(H(q_1,\ldots,q_n)\).  Indeed, suppose
\[
\lambda h_{12}+\sum_{i\in I}\alpha_i h_{1i}+\sum_{i\in I}\beta_i h_{2i}=0.
\]
For a fixed \(i\in I\), the rows \(C_i,D_i\) give
\[
-\alpha_i=0,
\qquad
-\beta_i=0.
\]
Thus \(\alpha_i=\beta_i=0\) for all \(i\in I\), and then \(\lambda h_{12}=0\), so \(\lambda=0\).

Fix \(i,j\in I\) with \(i<j\).  We claim that
\begin{equation}\label{eq:hyper-column-relation}
h_{ij}=(x_i-x_j)h_{12}-h_{1i}+h_{1j}-x_jh_{2i}+x_ih_{2j}.
\end{equation}
This is checked row by row.  In rows \(C_i,D_i\), the right hand side of \eqref{eq:hyper-column-relation} is
\[
-h_{1i}-x_jh_{2i}
=-(-1,0)-x_j(0,-1)
=(1,x_j),
\]
which is the entry of \(h_{ij}\) in rows \(C_i,D_i\).  In rows \(C_j,D_j\), it is
\[
h_{1j}+x_ih_{2j}
=(-1,0)+x_i(0,-1)
=(-1,-x_i),
\]
which is the entry of \(h_{ij}\) in rows \(C_j,D_j\).  In rows \(C_1,D_1\), it is
\[
(x_i-x_j)(0,1)-(1,x_i)+(1,x_j)=(0,0),
\]
and in rows \(C_2,D_2\), it is
\[
(x_i-x_j)(-1,0)-x_j(1,x_i)+x_i(1,x_j)=(0,0).
\]
All other rows are zero.  This proves \eqref{eq:hyper-column-relation}.

Now perform the non-zero column scalings
\begin{equation*}
\widehat h_{12}=h_{12},
\qquad
\widehat h_{1i}=-h_{1i},
\qquad
\widehat h_{2i}=-\frac{1}{x_i}h_{2i}
\qquad (i\in I).
\end{equation*}
Then \eqref{eq:hyper-column-relation} becomes
\begin{equation*}
h_{ij}=(x_i-x_j)\widehat h_{12}
+\widehat h_{1i}-\widehat h_{1j}
+x_ix_j(\widehat h_{2i}-\widehat h_{2j}).
\end{equation*}
Finally scale the non-basis column \(h_{ij}\) by \((x_i-x_j)^{-1}\).  In the basis \(B\), the resulting column has coordinates
\[
\widehat h_{12}
+\frac{\widehat h_{1i}-\widehat h_{1j}}{x_i-x_j}
+\frac{x_ix_j}{x_i-x_j}(\widehat h_{2i}-\widehat h_{2j}).
\]
Since
\[
\frac{x_ix_j}{x_i-x_j}
=\frac{1}{\frac{1}{x_j}-\frac{1}{x_i}}
=\frac{1}{y_j-y_i},
\]
this is exactly the column \eqref{eq:star-column} of \(S(x,y)\).  Therefore \(H(q_1,\ldots,q_n)\) and \(S(x,y)\) represent the same matroid.
\end{proof}

\begin{proposition}[Normalized hyperconnectivity realization]
	\label{prop:normalized-hyper-realization}
	Let $\F$ be a field, let $n\geq6$, and put $I=\{3,\ldots,n\}$.
	Suppose that $M$ is an $\F$-representable abstract $2$-rigidity matroid
	on $\binom{[n]}{2}$ in which every copy of $K_{3,3}$ is a circuit.
	Then there are distinct elements
	\[
	x_i\in\F^\times\qquad(i\in I)
	\]
	such that $M$ is represented by
	\[
	H(q_1,\ldots,q_n),
	\]
	where
	\[
	q_1=(1,0),\qquad q_2=(0,1),\qquad
	q_i=(1,x_i)\quad(i\in I).
	\]
\end{proposition}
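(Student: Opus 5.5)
This is a direct chaining of the three preceding results; the only real work is checking that their hypotheses match up.

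First, apply Theorem~\ref{thm:abstract-normal-form} to $M$. This gives a star-basis matrix $S(x,y)$ representing $M$, with the $x_i$ distinct and the $y_i$ distinct. The hypothesis that every copy of $K_{3,3}$ is a circuit is a property of $M$. Since $S(x,y)$ represents $M$, the property holds in the column matroid of $S(x,y)$. As $n\geq 6$, Proposition~\ref{prop:k33-fractional} applies. It yields parameters $X_i,Y_i\in\F^\times$ with $Y_i=1/X_i$ such that $S(X,Y)$ represents the same matroid, so $S(X,Y)$ represents $M$.

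Second, before invoking Proposition~\ref{prop:hyper-conversion} with $x_i:=X_i$, I need to confirm its hypotheses, namely that the $X_i$ are nonzero and pairwise distinct. Nonvanishing is part of the conclusion of Proposition~\ref{prop:k33-fractional}. Distinctness follows in either of two ways. It was noted in that proof, since $X_i-X_j=-\beta(x_i-x_j)$ with $\beta\neq 0$. It is also implicit in $S(X,Y)$ being a well-defined star-basis matrix, whose entries involve $1/(X_i-X_j)$.

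Proposition~\ref{prop:hyper-conversion} then states that the column matroid of $S(X,1/X)$ is represented by $H(q_1,\ldots,q_n)$ with $q_1=(1,0)$, $q_2=(0,1)$ and $q_i=(1,X_i)$. Renaming $X_i$ as $x_i$ gives the claimed representation of $M$.

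I do not expect a genuine obstacle; the hypothesis check in the second step is the only step needing care.
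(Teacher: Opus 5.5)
Your proposal is correct and matches the paper's proof: it chains Theorem~\ref{thm:abstract-normal-form}, Proposition~\ref{prop:k33-fractional} (using $n\geq 6$), and Proposition~\ref{prop:hyper-conversion} in the same order. It also includes the same check that the resulting parameters are non-zero and pairwise distinct.
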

\begin{proof}
	By Theorem~\ref{thm:abstract-normal-form}, the matroid $M$ is
	represented by a star-basis matrix $S(x,y)$. By
	Proposition~\ref{prop:k33-fractional}, after replacing $S(x,y)$ by an
	equivalent star-basis matrix, we may assume that
	\[
	y_i=\frac{1}{x_i}\qquad(i\in I),
	\]
	where the elements $x_i\in\F^\times$, $i\in I$, are distinct.
	Proposition~\ref{prop:hyper-conversion} then gives the stated
	hyperconnectivity representation.
\end{proof}
\begin{proof}[Proof of Theorem~\ref{thm:main-hyper-realization}]
	The theorem follows immediately from
	Proposition~\ref{prop:normalized-hyper-realization}.
\end{proof}
\begin{remark}
Over $\mathbb{R}$, Theorem~\ref{thm:main-hyper-realization} can also be deduced directly from Theorem~\ref{thm:rigidity-realization} roughly as follows (I thank D\'aniel Garamv\"olgyi for bringing this to my attention). By the Bolker--Roth theorem~\cite{BR80}, a copy of $K_{3,3}$ is dependent in the infinitesimal rigidity matrix if and only if its six points lie on a conic. Hence, the matroid $M$, by Theorem~\ref{thm:rigidity-realization} and the Bolker--Roth theorem, will have an infinitesimal rigidity matrix realization over $\mathbb{R}$, such that all underlying points lie on the same non-degenerate conic. Using projective invariance~\cite{NSW}, we may then assume this conic to be the standard parabola $y=x^2$. With these coordinates, one can apply row and column transformations to the infinitesimal rigidity matrix, to bring it into hyperconnectivity form.  
\end{remark}
\subsection{A restriction on further circuits}
In this subsection we prove Theorem~\ref{thm:representable-families}.
For this, let us first recall two elementary facts about polynomials.
\begin{lemma}\label{lem:grid}
Let \(X\subseteq\F\) be finite, and let
\[
P\in\F[z_1,\ldots,z_v]
\]
be a polynomial satisfying
\[
\deg_{z_r}P<|X|
\qquad (1\leq r\leq v).
\]
If \(P\) vanishes on \(X^v\), then \(P=0\).
\end{lemma}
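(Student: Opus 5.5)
We argue by induction on the number of variables $v$, which is the standard proof of this combinatorial-Nullstellensatz-type fact. For $v=1$: a non-zero univariate polynomial $P\in\F[z_1]$ of degree less than $|X|$ has fewer than $|X|$ roots in $\F$. If $P$ vanishes on all of $X$, it therefore has at least $|X|$ roots, and must be the zero polynomial. This uses only that $\F$ is a field, hence an integral domain, so the root count is bounded by the degree.

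For the inductive step, suppose the claim holds for $v-1$ variables, and let $P$ satisfy the hypotheses in $v$ variables. Put $d=\deg_{z_v}P<|X|$ and expand $P$ in powers of the last variable:
\[
P=\sum_{t=0}^{d} P_t(z_1,\ldots,z_{v-1})\,z_v^{t},
\]
where $P_t\in\F[z_1,\ldots,z_{v-1}]$ and $\deg_{z_r}P_t\leq\deg_{z_r}P<|X|$ for every $r\leq v-1$. Fix any point $(\xi_1,\ldots,\xi_{v-1})\in X^{v-1}$. The univariate polynomial
\[
z_v\mapsto P(\xi_1,\ldots,\xi_{v-1},z_v)=\sum_{t=0}^{d}P_t(\xi_1,\ldots,\xi_{v-1})\,z_v^t
\]
has degree at most $d<|X|$ and vanishes on $X$. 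By the case $v=1$ it is the zero polynomial, so every coefficient $P_t(\xi_1,\ldots,\xi_{v-1})$ vanishes.

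Since the point was arbitrary, each $P_t$ vanishes on $X^{v-1}$. By the induction hypothesis each $P_t$ is therefore the zero polynomial, and hence $P=0$. There is no real obstacle. The only point requiring care is that the degree bound in each variable $z_r$, $r\leq v-1$, passes from $P$ to the coefficient polynomials $P_t$. This holds because every monomial of $P_t z_v^t$ is a monomial of $P$ with the same exponents in $z_1,\ldots,z_{v-1}$. Note that finiteness of $\F$ plays no role; only finiteness of $X$ matters.
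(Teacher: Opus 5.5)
Your proposal is correct and is exactly the induction on $v$ that the paper has in mind; the paper omits the details, and your argument supplies them. That includes the point that the per-variable degree bounds pass from $P$ to the coefficient polynomials $P_t$.
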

\noindent
The proof is a straightforward induction on \(v\), we omit the details.
\begin{lemma}\label{lem:injective-grid}
Let \(P\in\F[z_1,\ldots,z_v]\) be non-zero, and suppose that
\[
\deg_{z_r}P\leq d
\qquad (1\leq r\leq v).
\]
Let \(X\subseteq\F\) be finite.  If
\[
P(t_1,\ldots,t_v)=0
\]
for every ordered \(v\)-tuple \((t_1,\ldots,t_v)\) of distinct elements of \(X\), then
\begin{equation}\label{eq:X-bound}
|X|\leq d+v-1.
\end{equation}
\end{lemma}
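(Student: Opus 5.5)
We argue by contradiction via Lemma~\ref{lem:grid}: if $|X|\geq d+v$, we will manufacture a non-zero polynomial of low degree in each variable that vanishes on the whole grid $X'^{\,v}$ for a suitable subset $X'$. The natural device is to multiply $P$ by the Vandermonde-type product
\[
\Delta(z_1,\ldots,z_v)=\prod_{1\leq r<s\leq v}(z_r-z_s),
\]
and set $Q=P\cdot\Delta$. Then $Q$ vanishes at every point of $X^v$: at tuples of distinct elements because $P$ vanishes there, and at tuples with a repeated entry because $\Delta$ vanishes there. Since $\F[z_1,\ldots,z_v]$ is an integral domain and both factors are non-zero, $Q\neq 0$.

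Next we bound the degrees. In each variable $z_r$, the factor $\Delta$ has degree exactly $v-1$, since $z_r$ occurs in $v-1$ linear factors. Hence
\[
\deg_{z_r}Q\leq d+v-1\qquad(1\leq r\leq v).
\]
If $|X|\geq d+v$, then $\deg_{z_r}Q<|X|$ for every $r$. Lemma~\ref{lem:grid} applied to $Q$ and $X$ then forces $Q=0$, a contradiction. Therefore $|X|\leq d+v-1$, which is \eqref{eq:X-bound}.

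The only point requiring care is the degree count for $\Delta$ together with the non-vanishing of the product; both are immediate. The degenerate cases need no separate treatment: if $v=1$, then $\Delta=1$ and the statement is Lemma~\ref{lem:grid} directly, and if $|X|<v$, there are no distinct tuples but the bound $|X|\leq v-1\leq d+v-1$ holds trivially. So there is no real obstacle; the argument rests entirely on Lemma~\ref{lem:grid} and the observation that $\Delta$ kills exactly the non-injective tuples.
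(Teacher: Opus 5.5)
Your proof is correct and follows the paper's argument exactly: multiply $P$ by the Vandermonde product $\Delta$, note that $Q=P\Delta$ is non-zero and vanishes on all of $X^v$, bound $\deg_{z_r}Q\leq d+v-1$, and invoke Lemma~\ref{lem:grid}. The subset $X'$ mentioned in your opening sentence is never needed, since you apply Lemma~\ref{lem:grid} to $X$ itself.
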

\begin{proof}
Let
\[
\Delta(z_1,\ldots,z_v)=\prod_{1\leq r<s\leq v}(z_r-z_s)
\]
be the Vandermonde polynomial, and put
\[
Q=P\Delta.
\]
Since \(\F[z_1,\ldots,z_v]\) is an integral domain, \(Q\neq0\).  If \((t_1,\ldots,t_v)\in X^v\), then either the \(t_r\) are distinct, in which case \(P(t_1,\ldots,t_v)=0\), or two entries are equal, in which case \(\Delta(t_1,\ldots,t_v)=0\).  Thus \(Q\) vanishes on \(X^v\).

For each \(r\), the degree of \(\Delta\) in \(z_r\) is \(v-1\).  Hence
\[
\deg_{z_r} Q\leq d+v-1
\qquad (1\leq r\leq v).
\]
If \(|X|>d+v-1\), then \(\deg_{z_r}Q<|X|\) for every \(r\), so Lemma~\ref{lem:grid} implies \(Q=0\), a contradiction.  Therefore \eqref{eq:X-bound} holds.
\end{proof}

\begin{definition}\label{def:symmetric}
	A matroid \(M\) on \(\binom{[n]}{2}\) is \emph{symmetric} if its independent sets, and equivalently its circuits, are invariant under the permutations of $\binom{[n]}{2}$ induced by permutations of $[n]$.  If \(F\) is a graph with \(|V(F)|\leq n\), we say that \(F\) is \(M\)-independent/dependent/circuit if every copy of \(F\) on \([n]\) is of that status in \(M\).
\end{definition}
\noindent
We next compare $H_{2,n}$ with hyperconnectivity matrices obtained by
specializing their parameters.
\begin{lemma}\label{lem:generic-specialization}
	Suppose that $\F$ is a field and that $q_1,\ldots,q_n\in\F^2$. Then $H_{2,n}$ is freer than the column matroid of $H(q_1,\ldots,q_n)$.
\end{lemma}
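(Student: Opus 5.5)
Let $p$ be the characteristic of $\F$. The plan is the standard specialization argument, followed by the characteristic independence \eqref{eq:Hcharindep}. Let $c_1,d_1,\ldots,c_n,d_n$ be indeterminates algebraically independent over $\F$. By the remark after the definition of $H_{2,n}^{(p)}$ (via Lemma~\ref{lem:scalar-extension}), the matrix $H\bigl((c_1,d_1),\ldots,(c_n,d_n)\bigr)$ represents $H_{2,n}^{(p)}$ over $\F(c_1,d_1,\ldots,c_n,d_n)$. By \eqref{eq:Hcharindep}, $H_{2,n}^{(p)}=H_{2,n}$. It therefore suffices to show that every set of edges independent in the column matroid of $H(q_1,\ldots,q_n)$ is independent in the column matroid of the generic matrix.

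Let $X$ be a set of edges that is independent in $H(q_1,\ldots,q_n)$, with $|X|=r$. The columns $\{h_{ij}:ij\in X\}$ of $H(q_1,\ldots,q_n)$ are linearly independent over $\F$, so some $r\times r$ minor, using a certain set of $r$ rows, is non-zero. Consider the same rows and columns in the generic matrix. Their determinant is a polynomial
\[
P\in\F[c_1,d_1,\ldots,c_n,d_n],
\]
because every entry of $H$ is one of $\pm c_i,\pm d_i$ or $0$. Evaluating $P$ at $(c_i,d_i)=q_i$ is a ring homomorphism, and it gives exactly the corresponding minor of $H(q_1,\ldots,q_n)$, which is non-zero. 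Hence $P\neq 0$ as a polynomial. Since the indeterminates are algebraically independent over $\F$, $P$ is a non-zero element of the rational function field. So the columns indexed by $X$ in the generic matrix are independent, and $X$ is independent in $H_{2,n}^{(p)}=H_{2,n}$.

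I do not expect a real obstacle. The one point needing care is that the comparison must be made with $H_{2,n}^{(p)}$ in the characteristic of $\F$ rather than directly with $H_{2,n}^{(0)}$, since the entries of $q_i$ live in a field of characteristic $p$. This gap is closed by \eqref{eq:Hcharindep}, i.e.\ \cite[Corollary~1.13]{Tyo26}; without it one would get only $M\preceq H_{2,n}^{(p)}$.
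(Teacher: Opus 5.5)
Your proposal is correct and follows essentially the same route as the paper: a non-zero minor of $H(q_1,\ldots,q_n)$ shows that the corresponding determinant polynomial over $\F$ is non-zero, so the same columns are independent over $\F(c_1,d_1,\ldots,c_n,d_n)$. You then identify $H_{2,n}^{(p)}$ with $H_{2,n}$ via \eqref{eq:Hcharindep}. The paper uses that step too, but only implicitly, through the definition of $H_{2,n}$ as the common matroid; you make it explicit.
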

\begin{proof}
	Write $q_i=(\gamma_i,\delta_i)$ for every $i\in[n]$. Let
	$J\subseteq\binom{[n]}{2}$ be independent in the column matroid of
	$H(q_1,\ldots,q_n)$. There is a choice of $|J|$ rows such that the
	resulting square submatrix with columns indexed by $J$ has non-zero
	determinant. Let
	\[
	D\in\F[c_1,d_1,\ldots,c_n,d_n]
	\]
	be the determinant of the corresponding submatrix of
	\[
	H\bigl((c_1,d_1),\ldots,(c_n,d_n)\bigr).
	\]
	By the choice of the rows, we have
	\[
	D(\gamma_1,\delta_1,\ldots,\gamma_n,\delta_n)\neq0.
	\]
	Hence $D$ is a non-zero polynomial, and consequently the columns
	indexed by $J$ are independent over
	$\F(c_1,d_1,\ldots,c_n,d_n)$. By Lemma~\ref{lem:scalar-extension} and the definition of $H_{2,n}$,
	they are independent in $H_{2,n}$.
	Thus $H_{2,n}$ is freer than the column matroid of
	$H(q_1,\ldots,q_n)$.
\end{proof}
Let \(F\) be a graph with vertex set \([v]=\{1,\ldots,v\}\) and edge set \(E(F)\).  Put \(m=|E(F)|\).  Define a \(2v\times m\) matrix
\[
A_F(z)=A_F(z_1,\ldots,z_v)
\]
over \(\F[z_1,\ldots,z_v]\) as follows.  The rows are indexed by
\[
C_1,D_1,\ldots,C_v,D_v,
\]
and the columns are indexed by \(E(F)\).  For an edge \(ab\in E(F)\), with \(a<b\), the column indexed by \(ab\) has non-zero entries
\begin{equation*}
	(A_F)_{C_a,ab}=1,
	\qquad
	(A_F)_{D_a,ab}=z_b,
	\qquad
	(A_F)_{C_b,ab}=-1,
	\qquad
	(A_F)_{D_b,ab}=-z_a.
\end{equation*}
Thus \(A_F(z)\) is the two-dimensional hyperconnectivity matrix
$
H((1,z_1),\ldots,(1,z_v))
$,
restricted in the columns to the edges of \(F\).
\begin{lemma}\label{lem:PF-polynomial}
	Suppose that $\F$ is a field and that $G$ is an $\mathcal H_2$-independent graph with vertex set $[v]$ and $m$ edges. Then there exists a non-zero polynomial
	\[
	P_{G,\F}(z_1,\ldots,z_v)\in\F[z_1,\ldots,z_v]
	\]
	such that
	\begin{enumerate}
		\item[(i)] 	$
		\deg_{z_r}P_{G,\F}\leq\deg_G(r)
		\qquad(1\leq r\leq v), $
		\item[(ii)]  For all $(\xi_1,\ldots,\xi_v)\in\F^v$ such that the columns of
		$A_G(\xi_1,\ldots,\xi_v)$ are dependent, we have $P_{G,\F}(\xi_1,\ldots,\xi_v)=0$.
	\end{enumerate}	
\end{lemma}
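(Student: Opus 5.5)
The polynomial $P_{G,\F}$ will be a suitable maximal minor of $A_G(z)$. Since $G$ is $\mathcal H_2$-independent, its edge set is independent in $H_{2,v}$, that is, in the column matroid of $H\bigl((c_1,d_1),\ldots,(c_v,d_v)\bigr)$ over $\F(c,d)$. So there is a choice of $m$ rows giving an $m\times m$ minor $D(c,d)\in\F[c_1,d_1,\ldots,c_v,d_v]$ that is a non-zero polynomial.

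The first step is to pass from $(c_r,d_r)$ to $(1,z_r)$. For each $r$, the column entries involving vertex $r$ as the ``other endpoint'' are linear in $(c_r,d_r)$. Any fixed minor of $H$ is therefore a polynomial that is multihomogeneous, of degree at most $\deg_G(r)$ in the pair $(c_r,d_r)$ for each $r$. The reason is that the variables $c_r,d_r$ appear only in columns of edges incident to $r$, each such column contributing at most one factor.

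Next I will make $D$ exactly homogeneous in each pair. Each column $ab$ contributes, through the Leibniz expansion, exactly one entry: either one from rows $C_a,D_a$, which is linear in $(c_b,d_b)$, or one from rows $C_b,D_b$, which is linear in $(c_a,d_a)$. The chosen rows determine which vertex pairs receive which degree, so the degrees in each term need not be constant. To handle this, I group the Leibniz terms by the orientation of edges (choosing the endpoint whose rows are used). By Laplace expansion by vertex row blocks, $D$ is a sum over orientations of products of $2\times2$-type determinants, and each block-structured summand has a fixed multidegree.

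The simplest route is to split $D$ into its multihomogeneous components with respect to the $\mathbb{Z}^v$-grading by $(c_r,d_r)$-degree. Some component $D'$ is non-zero. Dehomogenization $D'(1,z_1,\ldots,1,z_v)$ is non-zero because $D'$ is homogeneous in each pair: a non-zero form in $(c_r,d_r)$ of degree $k$ becomes $c_r^k g(d_r/c_r)$, and setting $c_r=1$ loses nothing. Its $z_r$-degree is at most $\deg_G(r)$.

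The component $D'$ is not itself a minor, so for (ii) I need another argument, and I expect this to be the main obstacle. Instead I will argue per vertex, using torus scaling. Replacing $(c_r,d_r)$ by $(tc_r,td_r)$ multiplies the columns of edges incident to $r$ in rows of the neighbours by $t$. This amounts to the matrix $H$ with rows $C_s,D_s$ rescaled inside a single column, which is not an obvious matroid-preserving operation, so scaling alone does not settle it.

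Instead I will pick rows cleverly: choose the nonzero minor $D$ of $H$ and set $P=D(1,z_1,\ldots,1,z_v)$, which is exactly a minor of $A_G(z)$. That gives (ii) immediately, since dependent columns make every maximal minor vanish, and (i) holds by the degree count. What remains is to show that some minor survives the specialization $c_r=1$.

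For this I use a genericity trick. Because $D\ne0$, there is a point where it is non-zero, in an extension field, with all $c_r\neq0$, since $\prod_r c_r\cdot D\neq0$. At that point, scale row pair $(C_s,D_s)$ and the columns. Setting $q_r=(c_r,d_r)=c_r(1,d_r/c_r)$, the column $ab$ of $H(q)$ equals $c_ac_b$ times the column of $H((1,d_1/c_1),\ldots)$ (entries $c_b(1,\cdot)$ and $-c_a(1,\cdot)$ after factoring appropriately). That means column scaling by $c_ac_b$ together with row-pair scalings by $c_a^{-1}$, which preserves independence. Hence the columns of $A_G$ are independent at some point over an extension, so some $m\times m$ minor of $A_G(z)$ is a non-zero polynomial; by Lemma~\ref{lem:scalar-extension} its coefficients lie in $\F$. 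Taking $P_{G,\F}$ to be that minor gives both (i) and (ii).
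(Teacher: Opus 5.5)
Your final argument is correct and is essentially the paper's proof: rescaling rows $C_r,D_r$ by $c_r^{\pm1}$ and column $ab$ by $(c_ac_b)^{\mp1}$ identifies the generic hyperconnectivity matrix on $E(G)$ with $A_G(d_1/c_1,\ldots,d_v/c_v)$. Hence some $m\times m$ minor of $A_G(z)$ is a non-zero polynomial, (i) follows from multilinearity in the columns, and (ii) follows because dependent columns make every maximal minor vanish; the paper works at the generic point and restricts to the subfield $\F(z_1,\ldots,z_v)$ rather than evaluating at a point of an extension field, which amounts to the same thing. The detours through multihomogeneous components and torus scaling can be dropped, and your aside that the multidegrees ``need not be constant'' is wrong: for a fixed row set $W$, every Leibniz term has degree $\deg_G(r)-|W\cap\{C_r,D_r\}|$ in the pair $(c_r,d_r)$, which is another way of seeing that $D(c,d)=\bigl(\prod_r c_r^{k_r}\bigr)D(1,d_1/c_1,\ldots,1,d_v/c_v)$ with $k_r=\deg_G(r)-|W\cap\{C_r,D_r\}|$.
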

\begin{proof}
	Let $c_1,d_1,\ldots,c_v,d_v$ be algebraically independent indeterminates
	over $\F$. By Lemma~\ref{lem:scalar-extension} and the definition of $H_{2,v}$,
	the matrix
	\[
	H\bigl((c_1,d_1),\ldots,(c_v,d_v)\bigr)
	\]
	represents $H_{2,v}$ over
	$\F(c_1,d_1,\ldots,c_v,d_v)$. Since $G$ is
	$\mathcal H_2$-independent, its columns indexed by $E(G)$ are independent
	over this field. Put
	\[
	z_r=\frac{d_r}{c_r}
	\qquad(1\leq r\leq v).
	\]
	The elements $z_1,\ldots,z_v$ are algebraically independent over $\F$, and
	\[
	\F(c_1,d_1,\ldots,c_v,d_v)
	=
	\F(c_1,z_1,\ldots,c_v,z_v).
	\]
	Multiply the rows $C_r,D_r$ by $c_r$ for every $r\in[v]$, and multiply the column indexed by $ab$ by $(c_ac_b)^{-1}$ for every $ab\in E(G)$. If $a<b$, the resulting column has non-zero entries
	\[
	C_a:1,\qquad
	D_a:z_b,\qquad
	C_b:-1,\qquad
	D_b:-z_a.
	\]
	Thus the resulting matrix is $A_G(z_1,\ldots,z_v)$. Its columns are independent over $\F(c_1,z_1,\ldots,c_v,z_v)$. Since its entries belong to the subfield $\F(z_1,\ldots,z_v)$, they are also independent over that subfield.
	
	Consequently, some $m\times m$ minor of $A_G(z_1,\ldots,z_v)$ is non-zero. Fix one such minor and denote its determinant by $P_{G,\F}$. The variable $z_r$ occurs only in columns indexed by edges incident with $r$, and it occurs at most linearly in each such column. Since a determinant is multilinear in its columns, we have
	\[
	\deg_{z_r}P_{G,\F}\leq\deg_G(r)
	\qquad(1\leq r\leq v).
	\]
	Finally, suppose that $\xi_1,\ldots,\xi_v\in\F$ and that the columns of $A_G(\xi_1,\ldots,\xi_v)$ are dependent. Then all their $m\times m$ minors vanish, including the chosen minor $P_{G,\F}$. Therefore
	\[
	P_{G,\F}(\xi_1,\ldots,\xi_v)=0.
	\]
\end{proof}
\begin{theorem}\label{thm:finite-bound}
	Suppose that $G$ is an $\mathcal H_2$-independent graph with $|V(G)|=v$, and let $\Delta(G)$ be its maximum degree. Let $M$ be a linearly representable symmetric abstract $2$-rigidity matroid on $n\geq6$ vertices such that every copy of $K_{3,3}$ is an $M$-circuit. If $G$ is $M$-dependent, then
	\[
	n\leq\Delta(G)+v+1.
	\]
\end{theorem}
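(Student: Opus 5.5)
By Proposition~\ref{prop:normalized-hyper-realization}, $M$ is represented over some field $\F$ by $H(q_1,\ldots,q_n)$ with $q_1=(1,0)$, $q_2=(0,1)$ and $q_i=(1,x_i)$ for $i\in I$. Here the $x_i\in\F^\times$, $i\in I=\{3,\ldots,n\}$, are distinct, so the set $X=\{x_i:i\in I\}$ has $|X|=n-2$. The plan is to embed copies of $G$ only on the vertices in $I$, where the representation has the uniform shape $A_G$. Then we use the symmetry of $M$ to turn $M$-dependence of $G$ into vanishing of the polynomial $P_{G,\F}$ from Lemma~\ref{lem:PF-polynomial} at every injective tuple from $X$. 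Lemma~\ref{lem:injective-grid} then bounds $|X|$.

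Concretely, assume $v\le n-2$; otherwise $n\le v+1\le\Delta(G)+v+1$ and we are done. Let $V(G)=[v]$ and fix an ordered $v$-tuple $(t_1,\ldots,t_v)$ of distinct elements of $X$. Write $t_r=x_{\sigma(r)}$ with $\sigma:[v]\to I$ injective, and consider the copy $\sigma(G)$ of $G$ on $[n]$. Since $M$ is symmetric and $G$ is $M$-dependent, $\sigma(G)$ is dependent in $M$. Hence the columns $h_{\sigma(a)\sigma(b)}$, $ab\in E(G)$, of $H(q_1,\ldots,q_n)$ are dependent.

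Only the rows $C_{\sigma(r)},D_{\sigma(r)}$ carry nonzero entries, so these columns form exactly the matrix of the rows of $H$ restricted to those vertices. Up to relabelling rows and possibly negating columns (when $\sigma$ reverses the order of $a<b$), this is $A_G(t_1,\ldots,t_v)$. To check the entries: $h_{ij}$ has $(c_j,d_j)=(1,x_j)$ in rows $C_i,D_i$ and $(-1,-x_i)$ in rows $C_j,D_j$, which matches the definition of $A_G$. Column negation does not affect dependence, so the columns of $A_G(t_1,\ldots,t_v)$ are dependent. Lemma~\ref{lem:PF-polynomial}(ii) then gives $P_{G,\F}(t_1,\ldots,t_v)=0$.

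Since $G$ is $\mathcal H_2$-independent, $P_{G,\F}$ is non-zero with $\deg_{z_r}P_{G,\F}\le\deg_G(r)\le\Delta(G)$. It vanishes at all injective $v$-tuples from $X$, so Lemma~\ref{lem:injective-grid} with $d=\Delta(G)$ gives $n-2=|X|\le\Delta(G)+v-1$, that is, $n\le\Delta(G)+v+1$.

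The step needing the most care is the identification of the restricted columns with $A_G$: tracking the orientation sign for $a<b$ versus $\sigma(a)>\sigma(b)$, and confirming that vertices $1,2$ play no role because $\sigma$ maps into $I$. Everything else is a direct invocation of the earlier lemmas.
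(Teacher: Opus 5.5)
Your proof is correct and follows the paper's argument essentially verbatim: you realize $M$ as $H(q_1,\ldots,q_n)$ via Proposition~\ref{prop:normalized-hyper-realization}, embed copies of $G$ into $I$ so that the restricted columns become $A_G(t_1,\ldots,t_v)$ up to row relabelling and column sign changes, and conclude with Lemmas~\ref{lem:PF-polynomial} and~\ref{lem:injective-grid}. Your preliminary case split on $v\le n-2$ is harmless but unnecessary, since Lemma~\ref{lem:injective-grid} already covers $|X|<v$ vacuously.
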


\begin{proof}
Let $\F$ be a field over which $M$ is represented, and put
$I=\{3,\ldots,n\}$. By
Proposition~\ref{prop:normalized-hyper-realization}, there are distinct
elements $x_i\in\F^\times$, $i\in I$, such that $M$ is represented by
\[
H(q_1,\ldots,q_n),
\]
where
\[
q_1=(1,0),\qquad q_2=(0,1),\qquad
q_i=(1,x_i)\quad(i\in I).
\]
Put $X=\{x_i:i\in I\}$; we have $|X|=n-2$. Apply Lemma~\ref{lem:PF-polynomial} over $\F$ after labelling the vertices of $G$ by $[v]$. We obtain a non-zero polynomial
\[
P_{G,\F}(z_1,\ldots,z_v)\in\F[z_1,\ldots,z_v]
\]
satisfying
\[
\deg_{z_r}P_{G,\F}\leq\deg_G(r)\leq\Delta(G)
\qquad(1\leq r\leq v).
\]
Let $(t_1,\ldots,t_v)$ be an ordered $v$-tuple of distinct elements of $X$. Since the $x_i$ are distinct, there is an injective map $\phi:[v]\longrightarrow I$ such that
	\[
	t_r=x_{\phi(r)}
	\qquad(1\leq r\leq v).
	\]
The copy $\phi(G)$ is dependent in $M$ because $G$ is
$M$-dependent. After relabelling the rows $C_r,D_r$ of
$A_G(t_1,\ldots,t_v)$ as $C_{\phi(r)},D_{\phi(r)}$, this matrix is
obtained from the restriction of $H(q_1,\ldots,q_n)$ to the columns
indexed by $\phi(G)$ by deleting rows that are zero on all these
columns and multiplying some columns by $-1$. These operations preserve
all column dependences. Hence the columns of
$A_G(t_1,\ldots,t_v)$ are dependent, and
Lemma~\ref{lem:PF-polynomial} gives
\[
P_{G,\F}(t_1,\ldots,t_v)=0.
\]
	Thus $P_{G,\F}$ vanishes on every ordered $v$-tuple of distinct elements of $X$. Applying Lemma~\ref{lem:injective-grid} with $d=\Delta(G)$ gives
	\[
	|X|\leq\Delta(G)+v-1.
	\]
	Since $|X|=n-2$, we obtain
	\[
	n\leq\Delta(G)+v+1.
	\]
\end{proof}

\begin{proof}[Proof of Theorem~\ref{thm:representable-families}]
	Let $M_n$ be the matroid on $\binom{[n]}{2}$ whose independence family is $\mathcal F^n$. By the definition of a $2$-rigidity family, each $M_n$ is symmetric. Moreover, the status of a fixed graph as independent/dependent/circuit in $M_n$ does not depend on $n$ once $n$ is at least its number of vertices.
	
	Suppose, for a contradiction, that $M_n$ is linearly representable for arbitrarily large $n$. Since $\mathcal F\neq\mathcal R_2$, it follows from~\cite[Theorem~1.6]{Tyo26} that $K_{3,3}$ is a circuit in $\mathcal F$. Hence every copy of $K_{3,3}$ is an $M_n$-circuit whenever $n\geq6$.
	
	We claim that
	\[
	\mathcal F\subseteq\mathcal H_2.
	\]
	Let $G\in\mathcal F$, and put $v=|V(G)|$. Choose $n\geq\max\{6,v\}$ such that $M_n$ is linearly representable, and let $\F$ be a field over which it is represented. By Theorem~\ref{thm:main-hyper-realization}, the matroid $M_n$ has a representation by a hyperconnectivity matrix over $\F$. Lemma~\ref{lem:generic-specialization} gives
	\[
	M_n\preceq H_{2,n}.
	\]
	Every copy of $G$ is independent in $M_n$, so it is independent in $H_{2,n}$, i.e. $G$ is $\mathcal H_2$-independent. This proves the claimed inclusion.
	
	Since $\mathcal F\neq\mathcal H_2$, choose a graph $G\in\mathcal H_2\setminus\mathcal F$, and put $v=|V(G)|$. Every copy of $G$ is dependent in $M_n$ whenever $n\geq v$. Choose a linearly representable $M_n$ satisfying
	\[
	n>\max\{6,\Delta(G)+v+1\}.
	\]
	Then $M_n$ satisfies all the hypotheses of Theorem~\ref{thm:finite-bound}, which gives
	\[
	n\leq\Delta(G)+v+1,
	\]
	contradicting the choice of $n$. 
	
	Therefore, there exists $n_0=n_0(\mathcal F)$ such that $M_n$ is not linearly representable whenever $n\geq n_0$, as claimed.
\end{proof}
\section*{Acknowledgement}
Similar results were obtained, independently and using different methods, by Garamv\"olgyi and Vermant, and will appear in a forthcoming paper~\cite{GV}. I thank D\'aniel Garamv\"olgyi for insightful comments on the manuscript.
\bibliographystyle{amsplain}
\bibliography{refs}

\end{document}